\documentclass[11pt,reqno,twoside]{amsart}
\usepackage{mathrsfs}
\usepackage{amsfonts,amssymb,amsmath,amsthm}
\usepackage[noadjust]{cite}
\usepackage[colorlinks=true,citecolor=red]{hyperref}
\usepackage{tocvsec2}
\usepackage{titletoc}
\usepackage{geometry}
\usepackage{bm}
\usepackage{indentfirst}
\usepackage{graphicx}
\usepackage{float} 
\usepackage{booktabs}
\usepackage{longtable}
\usepackage{subfigure}
\usepackage{stmaryrd}
\usepackage{enumerate}
\usepackage{ulem}
\usepackage{color,soul}
\usepackage{setspace}
\usepackage{stmaryrd}
\usepackage[pagewise]{lineno} 
\usepackage{appendix}
\usepackage{cases}
\usepackage{todonotes}
\usepackage{enumitem}

\numberwithin{equation}{section}
\newtheorem{theorem}{Theorem}[section]
\newtheorem*{maintheorem}{Main Theorem}
\newtheorem{lemma}[theorem]{Lemma}
\newtheorem{corollary}[theorem]{Corollary}
\newtheorem{proposition}[theorem]{Proposition}
\numberwithin{equation}{section}
\newtheorem{remark}[theorem]{Remark}
\newtheorem{definition}[theorem]{Definition}
\newtheorem{example}[theorem]{Example}

\newcommand{\supp}{\operatorname{supp}}
\newcommand{\re}{\operatorname{Re}}
\newcommand{\im}{\operatorname{Im}}
\newcommand{\dist}{\operatorname{dist}}

\newcommand{\Id}{\operatorname{Id}}

\newcommand{\NR}{\operatorname{NR}}

\usepackage{tikz}
\usetikzlibrary{decorations.pathreplacing,calligraphy,shapes}
\tikzstyle{process} = [rectangle, rounded corners, minimum width=5.2cm, minimum height=0.7cm, text centered, draw=black, align=center]
\tikzstyle{smallprocess} = [rectangle, rounded corners, minimum width=4cm, minimum height=0.7cm, text centered, draw=black, align=center]
\tikzstyle{tinyprocess} = [rectangle, rounded corners, minimum width=3cm, minimum height=0.7cm, text centered, draw=black, align=center]
\tikzstyle{point} = [coordinate, on grid]
\tikzstyle{arrow} = [->,>=stealth, thick]
\tikzstyle{dasharrow} = [dashed,->,>=stealth]

\usepackage[flushmargin]{footmisc}
\hypersetup{linkcolor=blue}

\makeatletter
\@namedef{subjclassname@2020}{\textup{2020} Mathematics Subject
	Classification}
\makeatother

\begin{document}
	
\title[Mixing for NLS with degenerate noise]{\Large E\MakeLowercase{xponential mixing for nonlinear} S\MakeLowercase{chr\"odinger equations}\\
\vspace{1mm}\MakeLowercase{perturbed by bounded degenerate noise}}

\subjclass[2020]{}
	
\author[Y. Chen, S. Xiang, Z. Zhang]{Y\MakeLowercase{uxuan} C\MakeLowercase{hen}, S\MakeLowercase{hengquan} X\MakeLowercase{iang}, Z\MakeLowercase{hifei} Z\MakeLowercase{hang}}
		
\address[Yuxuan Chen]{School of Mathematical Sciences, Peking University, 100871, Beijing, China.}
\email{chen\underline{ }yuxuan@pku.edu.cn}

\address[Shengquan  Xiang]{School of Mathematical Sciences, Peking University, 100871, Beijing, China.}
\email{shengquan.xiang@math.pku.edu.cn}

\address[Zhifei  Zhang]{School of Mathematical Sciences, Peking University, 100871, Beijing, China.}
\email{zfzhang@math.pku.edu.cn}

\subjclass[2020]{
35Q55, % NLS equations (nonlinear Schr¨odinger equations)
37A25,  % (2000–now)Ergodicity, mixing, rates of mixing 
37L15, % Stability problems for infinite-dimensional dissipative dynamical systems
37L55, % Infinite-dimensional random dynamical systems; stochastic equations [See also 35R60, 60H10, 60H15].
93C20. % (1973–now)Control/observation systems governed by partial differential equations
    }
	
\keywords{Exponential mixing;  Nonlinear Schr\"odinger equations; Exponential asymptotic compactness; Asymptotic compactness of linearization; Controllability}
	
\begin{abstract}
    We prove the exponential convergence to a unique invariant measure for locally damped nonlinear Schr\"odinger equations, perturbed by bounded noise acting on only two Fourier modes. To tackle the lack of smoothing effect, we introduce asymptotic compactness of linearized system to enhance the coupling method. Inspired by \cite{KNS-20,LWXZZ-24,CXZZ-25}, we establish a new criterion for exponential mixing. Elements from global stability, nonlinear smoothing, and geometric control are combined when applying this criterion.
\end{abstract}
	
\maketitle
	
% \setcounter{tocdepth}{1}
% \tableofcontents

\section{Introduction}

Over the past three decades, the ergodic theory of randomly forced PDEs has attracted a wide range of interest in mathematical physics, with particular focus on the uniqueness and convergence rates of invariant measures for infinite-dimensional dynamical systems under stochastic forcing. Among such systems, nonlinear Schr\"odinger (NLS) equations are of fundamental importance, arising naturally in quantum mechanics, nonlinear optics, and wave propagation in disordered media. The present paper is devoted to the study of exponential mixing for locally damped NLS equations driven by random noise.

A particularly challenging scenario occurs when the noise is {\it highly degenerate}, acting only on specific degrees of freedom. In the context of parabolic PDEs, the problem of unique ergodicity has been extensively studied (see, e.g., \cite{FGRT-15,HM-06,HM-08,HM-11,HMS-11,KNS-20,KNS-20-1,MP-06}). Nevertheless, much less is known for hyperbolic (or dispersive) PDEs. The primary difficulty in the hyperbolic setting stems from the absence of smoothing effect. This typical feature renders most probabilistic criteria developed for parabolic equations ineffective. 

Inspired by the series of works \cite{CLXZ-24,CXZZ-25,CX-26,LWXZZ-24}, our treatment for the lack of smoothing is embodied in the concept of {\it asymptotic compactness}. We believe that the methodology here can be extended to other dispersive models, such as the KdV equation.

\subsection{Main result}
Let us consider the locally damped NLS equation on the torus $\mathbb{T}:=\mathbb{R}/2\pi\mathbb{Z}$, with nonlinear exponent $p\geq 3$ an odd integer, and driven by random force:
\begin{equation}\label{randomNLS}
\left\{\begin{array}{ll}
iu_t+u_{xx}+ia(x)u=|u|^{p-1}u+\eta(t,x),\\
u(0,\cdot)=u_0 \in H^1(\mathbb{T}).
\end{array}\right.
\end{equation}
The assumptions on the damping coefficient $a(x)$ and random noise $\eta(t,x)$ are stated as follows:

\begin{itemize}
\item[$(\mathbf{S1})$] (Localized damping) 
The damping $a: \mathbb{T}\rightarrow \mathbb{R}^+$ is smooth, non-negative, and not identically $0$. In particular, it can be supported in a small open subset of $\mathbb{T}$.

\item[$(\mathbf{S2})$] (Degenerate Haar noise) The random noise $\eta(t,x)$ consists of exactly two spatial Fourier modes $1$ and $e^{ix}$, and takes the form
\[\eta(t,x)=b_0 \eta_0(t)+b_1 \eta_1(t)e^{ix},\]
where $b_0,b_1>0$ are constants, and $\eta_0,\eta_1$ are independent random processes with the same distribution as a fixed process $\tilde{\eta}$ constructed as follows. Let $\{h_0,h_{jl}\}$ denote the $L^\infty$-normalized Haar system defined by \eqref{def_Haar}, then
\[\tilde{\eta}(t)=\sum_{k=0}^\infty (\xi_k^1+i\xi_k^2) h_0(t-k)+\sum_{j=1}^\infty \sum_{l=0}^\infty c_j (\xi_{jl}^1+i\xi_{jl}^2)h_{jl}(t).\]
Here $c_j=cj^{-q}$ with constants $c>0$ and $q>1$ arbitrarily given, and $\xi_k^1,\xi_k^2,\xi_{jl}^1,\xi_{jl}^2$ are real-valued i.i.d.~random variables possessing Lipschitz density function $\rho$ with respect to the Lebesgue measure on $\mathbb{R}$. In addition, we assume that $\supp (\rho)\subset [-1,1]$ and $\rho(0)>0$.

\end{itemize}

The process $\tilde{\eta}$ is referred to as the (complex) Haar noise, which is widely used in engineering and signal processing to model temporally correlated random inputs (see, e.g., \cite{Van-06}). Recently, Haar noise has been incorporated into the ergodic theory of stochastic parabolic PDEs \cite{KNS-20,NZZ-24}, and we are concerned with the hyperbolic counterpart. We also mention that our result remains valid under a slightly more general noise structure (see Section~\ref{sec_applyNLS}).

\medskip

Under the above settings, the NLS system \eqref{randomNLS} is globally well-posed in $H^1(\mathbb{T})$. Moreover, despite the finite depth of temporal correlation, the noise structure guarantees that the solution at integer times $u_n := u(n)\, (n\in \mathbb{N}_0)$ forms a discrete Markov process $(u_n, \mathbb{P}_u)$ in $H^1(\mathbb{T})$. 

Our main result concerning exponential mixing can now be stated as follows.

\begin{maintheorem}
Assume $(\mathbf{S1})$ and $(\mathbf{S2})$ are valid. Then the Markov process $(u_n,\mathbb{P}_u)$ admits a unique invariant measure $\mu\in \mathcal{P}(H^1(\mathbb{T}))$. The support of $\mu$ is compact, and $\mu(H^1\setminus C^\infty)=0$. Moreover, there exist constants $C,\gamma>0$ such that 
\begin{equation}\label{EM-NLS}
\|\mathscr D(u_n)-\mu\|_{L}^*\leq C(1+E(u_0)) e^{-\gamma n}\quad \text{for any }u_0\in H^1(\mathbb{T}),\ n\in \mathbb{N}.
\end{equation}
Here $\|\cdot\|_L^*$ is the dual-Lipschitz distance in $H^1(\mathbb{T})$ (defined by \eqref{dualLip}), $\mathscr D(u_n)$ represents the law of $u_n$, and $E(\cdot)$ stands for the $H^1$-energy (defined by \eqref{energy-function}).
\end{maintheorem}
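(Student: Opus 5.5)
The plan is to verify the hypotheses of an abstract exponential mixing criterion for random dynamical systems $u_{n+1}=S(u_n,\eta_n)$ on $H^1(\mathbb{T})$. Here $\eta_n$ denotes the restriction of the noise to $[n,n+1)$. Because the Haar structure has finite temporal correlation, the $\eta_n$ are i.i.d.\ in a space $E$ of bounded functions, which gives the Markov property. The criterion is the one announced in the abstract, in the spirit of \cite{KNS-20,LWXZZ-24,CXZZ-25}. It asks for four things. First, \emph{dissipativity with exponential asymptotic compactness}: there is a compact set $\mathcal{Y}\subset H^1$, invariant under $S(\cdot,\eta)$ for all $\eta\in\supp\mathscr D(\eta_n)$, such that $\operatorname{dist}(u_n,\mathcal Y)\le C(1+E(u_0))e^{-\gamma n}$ almost surely. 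Second, \emph{irreducibility}: there is a point $\hat u$ such that from any $u\in\mathcal Y$ the trajectory enters any neighborhood of $\hat u$ with positive probability, uniformly in $u$. Third, \emph{regularity of $S$}: $S$ is $C^1$ in $(u,\eta)$, with Lipschitz derivatives on bounded sets. Fourth, \emph{asymptotic compactness of the linearization}: for $u\in\mathcal Y$ and a.e.\ $\eta$, the map $D_\eta S(u,\eta)$ has dense image modulo a compact-plus-contractive part. Concretely, we need the decomposition $D_uS(u,\eta)=K+R$ with $K$ compact and $\|R\|<1$ (after iteration), together with approximate controllability of the linearized equation through the two modes. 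From these, a coupling argument yields the dual-Lipschitz estimate \eqref{EM-NLS}. The measure $\rho$ has a Lipschitz density, which allows us to construct the coupling by transport along a finite-dimensional projection of the noise (the Haar coefficients). Uniqueness of $\mu$ and compactness of its support then come for free, since $\supp\mu\subset\mathcal Y$.

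The key steps are as follows. (1) Show global well-posedness, and show that the energy
$E(u)=\tfrac12\|u_x\|^2+\tfrac1{p+1}\|u\|_{L^{p+1}}^{p+1}$ decays exponentially up to a bounded forcing term. This uses (\textbf{S1}) via a unique-continuation and observability argument for the locally damped NLS on the torus. That argument gives $E(u(t))\le Ce^{-\gamma t}E(u_0)+C$, uniformly over the bounded noise. (2) Prove \emph{nonlinear smoothing}. We split $u=v+w$, where $v$ solves the damped linear equation and decays exponentially in $H^1$. The remainder $w$ is bounded in $H^{1+s}$ for some $s>0$, by Bourgain-space estimates on $\mathbb T$ with the gain coming from the nonresonant structure of $|u|^{p-1}u$. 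Bootstrapping this gives $\mathcal Y$ bounded in every $H^k$, which is why $\mu(H^1\setminus C^\infty)=0$. (3) For irreducibility, take the zero noise, whose flow drives everything to $\hat u=0$ by global stability of the damped deterministic equation. Since $\rho(0)>0$, small noise occurs with positive probability, and continuity of $S$ in $\eta$ finishes the argument. (4) Linearized controllability. The linearization at a trajectory $u(t)$ is
\[iz_t+z_{xx}+iaz=\tfrac{p+1}{2}|u|^{p-1}z+\tfrac{p-1}{2}|u|^{p-3}u^2\bar z+b_0\zeta_0+b_1\zeta_1e^{ix}.\]
We must show that its reachable set at time $1$ is dense in $H^1$ for almost every $\eta$, and that failure of saturation occurs only on a null set of noises. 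We would prove this by a geometric control / Lie-bracket saturation argument in the style of Agrachev--Sarychev. Iterated commutators with the cubic-type nonlinearity generate all Fourier modes from $\{1,e^{ix}\}$. Almost-sure observability then follows from an analyticity-in-time argument on the Haar coefficients, exploiting that $\eta$ is piecewise given by independent coefficients.

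The main obstacle is step (4) combined with the absence of smoothing: $D_uS$ is not compact. My first attempt would be to decompose the linearized flow the same way as the nonlinear one in (2). The damped linear part contracts, and the potential terms are compact thanks to the same $X^{s,b}$ smoothing. Then, in the criterion, I would only require approximate controllability of the \emph{compact} part. The remaining contraction is absorbed into the coupling by a squeezing iteration, as in \cite{CXZZ-25}. The delicate point is the uniformity of the control cost over $u\in\mathcal Y$ and the measurability in $\eta$. I would handle both via compactness of $\mathcal Y$, together with a Kuksin--Nersesyan--Shirikyan type lemma that reduces to a finite-dimensional projection of the noise.
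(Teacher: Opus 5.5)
Your architecture is the paper's: EAC onto a compact invariant $Y$, a splitting $D_uS=\text{compact}+\text{contraction}$ so that the pseudo-inverse of $D_\eta S$ only acts on the compact piece, dissipativity from zero noise with $\rho(0)>0$, saturation from $\{0,1\}$, and the KNS finite-dimensional measure transformation. The gap is that the splitting you propose for the linearization is false as stated.

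The potential $\tfrac{p+1}{2}|u|^{p-1}v$ contains the single-resonant part $\tfrac{p+1}{4\pi}\|u(t)\|_{L^{p-1}}^{p-1}v$. This is a time-dependent scalar multiple of $v$ itself, so its Duhamel contribution gains no regularity. Indeed,
\[
v(1)-S_a(1)v_0=\bigl(e^{-i\theta_u(1)}-1\bigr)S_a(1)v_0+\bigl(v(1)-e^{-i\theta_u(1)}S_a(1)v_0\bigr),
\]
where the second bracket is $H^{5/4}$-valued by Proposition~\ref{prop_H2}. Since $S_a(1)$ is invertible on $H^1$, the first term is not compact unless $\theta_u(1)\in 2\pi\mathbb{Z}$. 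So ``contractive damped linear part plus compact potential terms'' fails. The missing ingredient is the gauge $V=e^{i\theta_u(t)}v$, with $\theta_u$ as in \eqref{theta(u,t)def}, which moves this resonance into the contractive part. The correct decomposition is $D_{u_0}S=T+e^{-i\theta_u(1)}S_a(1)$, with $T\colon H^1\to H^{5/4}$ bounded uniformly over $Y\times K$ and Lipschitz in $\zeta$, as $(\mathbf{H2})$ requires. The piece $e^{-i\theta_u(1)}S_a(1)$ is still a contraction in $\tilde H^1$ because the phase is a unimodular scalar. The other resonant term $\tfrac{p^2-1}{4\pi}u\,\re(|u|^{p-3}u,v)_{L^2}$ is not removed by the gauge. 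It lands in $H^{5/4}$ only because the reference trajectory lies in $X_1^{5/4,b}$, which is one reason everything is done on $Y$. The same oversight affects your step (2): $u(t)-S_a(t)u_0$ is not smoother than $H^1$. Only $e^{i\theta_u(t)}u(t)-S_a(t)u_0$ is, and that still suffices for EAC because the phase is a scalar.

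Your route to almost-sure observability also would not work. A Haar path has jumps on a dense set of times (the dyadic rationals), so it is not continuous on any subinterval, and no analyticity-in-time argument is available. What the saturation proof actually uses is Lipschitz-observability (Definition~\ref{def_obs}). In the proof of Proposition~\ref{prop_H4}, one differentiates $(\varphi(t),e_l)_{L^2}\equiv 0$ twice along the adjoint flow. This gives $\sum_{k\in B}a_k\re f_k+b_k\im f_k=c$ with Lipschitz $a_k,b_k$ and continuous $c$; this step needs $u_0\in H^3$, so that $u_t+if\in C(0,1;H^1)$. Observability then forces $a_k=b_k=c=0$. Iterating up to $D^p\mathcal{N}$ with all directions equal to $ie_k$ yields $(\varphi,e_{2k-l})_{L^2}\equiv 0$, which is the extension $l\mapsto 2k-l$. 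Lipschitz-observability of the Haar process with $c_j=cj^{-q}$ is a roughness property: independent jumps at every dyadic scale, with sizes $c_j$ decaying far more slowly than $2^{-j}$. It is established in \cite[Section 5.2]{KNS-20}.
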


\begin{remark}
    We emphasize that only two Fourier modes are directly excited by the noise, indicating that even extremely low-dimensional stochastic forcing suffices to drive the solution to the unique equilibrium distribution. To the best of our knowledge, this is the first exponential mixing result for hyperbolic and dispersive PDEs perturbed by highly degenerate noise.\footnote{The reader may compare this result to {\rm\cite{CXZZ-25}}, which establishes exponential mixing for NLS equations driven by random noise localized in space, and acting on all determining modes (i.e.~sufficiently many modes).} We also stress that the noise structure $(\mathbf{S2})$ is independent of the damping coefficient $a(\cdot)$; both the magnitude and spatial support of $a(\cdot)$ can be arbitrarily small. This robustness suggests a potential perspective for the Gibbs measure in the asymptotic regime $a,\eta\to 0$.
\end{remark}

\begin{remark}
    It is noteworthy that the invariant measure $\mu$ is supported on the space of smooth functions $C^\infty(\mathbb{T})$, despite the fact that the solution $u(t)$ does not gain regularity along evolution. This phenomenon is referred to as asymptotic smoothing for deterministic NLS (see, e.g., {\rm\cite{Goubet-00}}). In our random model, it is a consequence of asymptotic compactness (see Proposition~{\rm\ref{prop_H1}}).
\end{remark}

The conclusion remains valid for random initial distribution (independent of the noise) with finite mean of $H^1$-energy, thanks to the Kolmogorov--Chapman relation. The statement and proof are standard and hence omitted (cf.~Theorem~\ref{thm_criterion}).

\subsection{Strategy and ingredients}

The proof of the Main Theorem relies on a new general exponential mixing criterion for Markov processes in non-compact phase spaces (Theorem~\ref{thm_criterion}), which is inspired by prior studies on parabolic systems \cite{KNS-20} and recent works on dispersive models \cite{LWXZZ-24,CXZZ-25}. The key novelty lies in addressing of the lack of smoothing effect, which we achieve through the concept of asymptotic compactness. To apply this criterion to random NLS model, we require several key properties: global stability, nonlinear smoothing, and controllability.

\subsubsection{A general criterion enhanced by asymptotic compactness}

The core of our probabilistic criterion is a coupling method, which, broadly speaking, requires that any two trajectories with sufficiently close initial states to become closer in a probabilistic sense. 

A wide variety of literature \cite{Shi-15,Shi-21,KNS-20,LWXZZ-24,CXZZ-25} has realized that, the coupling condition can be typically derived from a control property. To be more precise, let us formulate the Markov process $(u_n,\mathbb{P}_n)$ induced from the NLS model by
\[u_n=S(u_{n-1},\eta_n),\]
where $\eta_n$ stands for the restriction of the noise $\eta$ to time interval $[n-1,n)$, and $S$ is the time-$1$ solution map of the NLS equation. The desired control property can be formulated as follows: 

\begin{quote}
    Given $u_0\in H^1(\mathbb{T})$ and another initial condition $\tilde{u}_0$ sufficiently close to $u_0$, can we construct, for almost every realization of the noise $\zeta$, a small perturbation $\tilde{\zeta}$ as the control, so that
    \[\|S(u_0,\zeta)-S(\tilde{u}_0,\tilde{\zeta})\|_{H^1}\le q\|u_0-\tilde{u}_0\|_{H^1}\quad \text{for some }q\in (0,1)?\]
\end{quote}

Our previous work \cite{CXZZ-25} reveals that this control property holds provided the reference trajectory issuing from $u_0$ enjoys higher regularity, say $u_0\in H^{1+\sigma}(\mathbb{T})$ for some $\sigma>0$; and it may fail for a specific trajectory in $H^1(\mathbb{T})$ without extra regularity. This necessitates a reduction to a subset of higher regularity. For parabolic PDEs, this follows naturally from smoothing effects. However, for dispersive equations such as the NLS, no smoothing effect is available.

To this end, we adopt the \textbf{exponential asymptotic compactness} (EAC) from \cite{LWXZZ-24,CXZZ-25}. Specifically, for any $\sigma>0$, we can construct a bounded subset $Y \subset H^{1+\sigma} (\mathbb{T})$ such that
\[\dist_{H^1(\mathbb{T})}(u_n,Y)\le C(\|u_0\|_{H^1})e^{-\kappa n}\quad \text{almost surely}.\]
Notably, the trajectory $u_n$ need not enter into $Y$. Still, this exponential attraction permits us to effectively restrict our analysis to the compact set $Y$; cf.~\cite[Proposition 2.4]{LWXZZ-24}.

\medskip

Nevertheless, deducing the coupling condition from control requires a quantitative refinement. In the parabolic case, \cite{HM-06,KNS-20} achieve this by invoking the Moore--Penrose pseudo-inverse, which provides a uniform approximation of the right-inverse of the linearized map $D_{u_0}S(u_0,\zeta)$ on compact subspaces. In the absence of smoothing, this approach breaks down.

To resolve this, we introduce a novel concept, termed {\bf asymptotic compactness of linearization}. Roughly speaking, this property asserts that $D_{u_0}S(u_0,\zeta)$ can be approximated by a compact operator in a suitable asymptotic sense. This idea is inspired by the Nash--Moser iteration; see Section~\ref{subsec_overviewRDS} for more details. 

\subsubsection{Deterministic NLS: global stability, nonlinear smoothing, and geometric control}

To apply our criterion, we need to verify three analytic properties for deterministic NLS; see Figure~\ref{fig_strategy}.

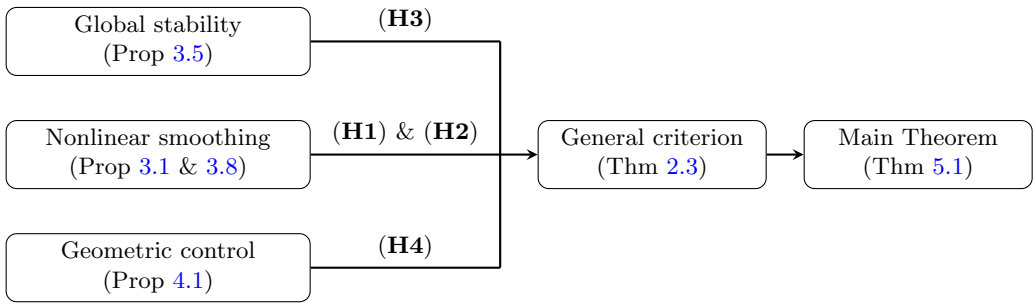
\begin{figure}[t]
    \centering
    \begin{tikzpicture}[node distance=1.5cm]
\footnotesize {
\node (stab) [smallprocess] {Global stability\\(Prop~\ref{prop_stability})};
\node (AC) [smallprocess, below of=stab] {Nonlinear smoothing\\(Prop~\ref{prop_H2} \& \ref{prop_H1})};
\node (ctrl) [smallprocess, below of=AC] {Geometric control\\(Prop~\ref{prop_H4})};

\node (criterion) [tinyprocess, right of=AC, xshift=5cm] {General criterion\\(Thm~\ref{thm_criterion})};
\node (mainthm) [tinyprocess, right of=criterion, xshift=2cm] {Main Theorem\\(Thm~\ref{thm_NLSgeneral})};

\node (pt1) [point, right of=stab, xshift=3cm]{};
\node (pt2) [point, right of=AC, xshift=3cm]{};
\node (pt3) [point, right of=ctrl, xshift=3cm]{};

\draw[thick] (stab) -- node [above, align=center] {$(\mathbf{H3})$} (pt1);
\draw[thick] (AC) -- node [above, align=center] {$(\mathbf{H1})$ \& $(\mathbf{H2})$} (pt2);
\draw[thick] (ctrl) -- node [above, align=center] {$(\mathbf{H4})$} (pt3);
\draw[thick] (pt1) -- (pt2);
\draw[thick] (pt2) -- (pt3);
\draw[arrow] (pt2) -- (criterion);
\draw[arrow] (criterion) -- (mainthm);
% \draw [arrow] (stab) -- node [right, align=left] {$(\mathbf{H2})$ Asymptotic compactness of linearization} (approx-ctrl);
}
\end{tikzpicture}
\caption{The role of three PDE properties in the probabilistic criterion. Here $(\mathbf{H1})$--$(\mathbf{H4})$ refers to various hypotheses for this criterion; see Section~\ref{subsec_RDSsetting}.}
\label{fig_strategy}
\end{figure}

\begin{itemize}
    \item The global stability refers to energy dissipation when there is no external force. Since the damping is merely localized in space, the standard energy method fails. More precisely, the $H^1$-energy functional is defined by
    \begin{equation}\label{energy-function}
        E(v):=\frac{1}{2}\int_{\mathbb{T}}|v|^2+\frac{1}{2}\int_{\mathbb{T}}| v_x|^2+\frac{1}{p+1}\int_{\mathbb{T}}|v|^{p+1},\quad v\in H^1(\mathbb{T}).
    \end{equation}
    Then, provided $\eta\equiv 0$, the energy identity becomes
    \[\frac{d}{dt} E(u(t))=-\int_{\mathbb{T}} a(x) (|u|^2+|u_x|^2+|u|^4)\, dx,\]
    which does not directly guarantee the exponential decay of $E(u(t))$.

    The energy decay under localized damping is a central problem that has been studied by various research groups; see, e.g., \cite{Laurent-ECOCV,LM-23,RZ-09,DGL-06,CXZZ-25}. These results essentially involve controllability, observability and unique continuation from control theory. 
    
    In particular, the corresponding issue in the model of this paper has been tackled in \cite{LM-23} and our prior work \cite{CXZZ-25} by virtue of Carleman estimates; see Proposition~\ref{prop_stability}.

    \item The asymptotic compactness is closely related to nonlinear smoothing, which means the gain of regularity in the nonlinear component (or potential terms) relative to the linear Schr\"odinger evolution. We emphasize that there is no direct regularity gain from the Duhamel convolution $\int_0^t e^{i(t-s)\Delta } f(s)\, ds$ (unlike wave equations \cite{Martirosyan-14,LWXZZ-24}). Basically, the underlying mechanisms for nonlinear smoothing are the dispersive relation and the polynomial structure of nonlinearity, after possibly removing some resonant terms  via suitable phase shift (or gauge transform).
    
    For instance, consider the cubic NLS equation (without damping) on $\mathbb{R}^2$, Bourgain \cite{Bourgain-98} showed that for any initial data $u_0\in H^s(\mathbb{R}^2)$ with $s>3/5$, the nonlinear part of the solution gains extra regularity:
    \[u(t)-e^{it\Delta} u_0\in H^1(\mathbb{R}^2),\]
    although both $u(t)$ and $e^{it\Delta} u_0$ belongs to $H^s(\mathbb{R}^2)$. Similar results are also valid on $\mathbb{R}^d\, (d>2)$ \cite{KV-09}, and on $\mathbb{T}$ up to a phase shift \cite{ET-13,McConnell-22}. More recently, \cite{ST-25} establishes almost sure global-in-time nonlinear smoothing on $\mathbb{T}^2$ for random initial data.
    
    In the recent work \cite{CXZZ-25}, nonlinear smoothing has been utilized to verify EAC; see Proposition~\ref{prop_H1}. And in this paper, we further employ the underlying multilinear estimates in Bourgain spaces to deduce asymptotic compactness of linearization. Namely, consider the linearized equation around the reference trajectory $u$, which reads
    \[iv_t+v_{xx}+ia(x) v=\tfrac{p+1}{2} |u|^{p-1} v+\tfrac{p-1}{2} |u|^{p-3} u^2 \bar{v},\quad v(0,\cdot)=v_0\in H^1(\mathbb{T}),\]
    then the contribution of the potential terms on the right-hand side (up to a phase shift) belongs to a more regular space, provided $u$ possesses extra regularity. We point out that this requirement on $u$ again exposes the necessity of reducing to a compact phase. We refer the reader to Proposition~\ref{prop_H2} and Section~\ref{subsec_ACviaNS} for details.

    \item The approximate controllability, as explained earlier, aims to drive the solution to a given state by finding a suitable external force as a control. Due to our setting on the noise structure, the control is highly degenerate, containing only two Fourier modes. To this end, we exploit the geometric control approach introduced by \cite{AS-06} (see also \cite{Sar-12,BP-25,BCP-25,CoXZ-25} for various NLS equations), which enables us to impose control on the unforced modes indirectly through nonlinear interactions.
    
    Roughly speaking, the nonlinear terms may generate new modes via multiplication, and thus enlarge the space of attainability. This is usually described by successive Lie bracket extensions. To rigorously realize such extension, in the deterministic setting, one can use fast-oscillating control \cite{AS-06,Sar-12}; and in the stochastic setting, one can exploit the roughness of white noise \cite{MP-06}, or Lipschitz-observability for suitable colored noise \cite{KNS-20}.
    
    In Section~\ref{sec_geometriccontrol}, we show that starting with the two Fourier modes in the noise, i.e.~$B_0=\{0,1\}$, it is possible to recursively excite new Fourier modes from
    \[B_n:=B_{n-1}\cup \{2k-l:k\in B_0,\ l\in B_{n-1}\},\]
    which implies the desired controllability since the iterated extensions $B_n$ span all Fourier modes, in the sense that $\mathbb{Z}=\bigcup_{n=0}^\infty B_n$. 
\end{itemize}

\medskip

Finally, we emphasize that the extra regularity in EAC plays a dual role: it is essential both for the probabilistic coupling argument and for the deterministic analysis.

\subsection{Related literature}

We briefly review previous results on the unique ergodicity and mixing of PDEs perturbed by additive noise.

\subsubsection{Parabolic equations} Significant progress has been made in last decades on the ergodic and mixing properties of random parabolic PDEs. 

For the 2D Navier--Stokes system, early works (e.g., \cite{FM-95, EMS-01, BKL-02}) address cases where all determining modes are directly perturbed. Hairer and Mattingly \cite{HM-06, HM-08} later applied Malliavin calculus and hypoelliticity to treat systems with highly degenerate white-in-time noise. More recently, Kuksin, Nersesyan and Shirikyan \cite{KNS-20,KNS-20-1} established a controllability-based method to handle bounded degenerate colored noise. Shirikyan \cite{Shi-15, Shi-21} also proposed a controllability approach for systems with physically localized bounded random forces. Mixing results for Navier--Stokes system on unbounded domains have also been obtained \cite{Ner-22, NZ-24}. 

For other parabolic equations with degenerate noise, see, e.g., \cite{FGRT-15} for Boussinesq equation with white noise and \cite{Ner-24} for complex Ginzburg--Landau equation with localized noise.

\subsubsection{Hyperbolic equations} In contrast, much less is known about the ergodicity for hyperbolic PDEs. And most existing results concern equations with full-domain damping.

For wave equations, one early result on unique ergodicity was given by Barbu and Da Prato \cite{BDP-02}. Later, Martirosyan \cite{Martirosyan-14} established exponential mixing for 3D wave equations with white-in-time noise, with typically sub-cubic nonlinearity. More recently, the last two authors and Liu, Wei, Zhao \cite{LWXZZ-24} proved exponential mixing of 3D cubic wave equations with localized damping and localized colored noise, by introducing EAC and controllability results.

As for Schr\"odinger equations, Debussche and Odasso \cite{DO-05} obtained polynomial mixing on an interval. Recently, the authors and Zhao \cite{CXZZ-25} established the first exponential mixing result for NLS equations. The unique ergodicity in $\mathbb{R}^d$ is known only for large damping \cite{EKZ-17,NS-25}.

In addition, the unique ergodicity for stochastic KdV equation can be found in \cite{GMR-21}, which also deduced exponential mixing when the damping is sufficiently large.

\medskip

Finally, we also refer the reader to \cite{BT-08,BT-24,GOT-22,BDNY-24,GKO-24} and references therein for other related topics on random dispersive equations.

\subsection{Organization} In Section~\ref{sec_RDS}, we establish a general criterion for the exponential mixing of Markov processes, combining two key ingredients: EAC and asymptotic compactness of linearization. Then we turn to the study of deterministic NLS equations, which serves to verify various hypotheses in our criterion. We study asymptotic compactness for linearized system in Section~\ref{sec_globaldynamic}, and carry out the geometric control analysis in Section~\ref{sec_geometriccontrol}. Finally, we prove a generalization of the Main Theorem in Section~\ref{sec_applyNLS}. Some auxiliary results and proofs are collected in the Appendix for the reader's convenience.

\subsection{Notation and convention}\label{subsec_notation}

We gather here some repeatedly used notations in this paper.

\medskip

\noindent $\bullet$ \textit{Fourier analysis.} For $k\in \mathbb{Z}$, the $k$-th Fourier mode on $\mathbb{T}$ is $e_k:=\frac{1}{\sqrt{2\pi}} e^{ikx}$. For any function $u\colon \mathbb{T}\to \mathbb{C}$, the corresponding Fourier coefficients are $\mathcal{F}u(k)=\hat{u}(k)=(u,e_k)_{L^2(\mathbb{T})}$,
where $(\cdot,\cdot)_{L^2(\mathbb{T})}$ is the complex $L^2$-inner product: $(f,g)_{L^2(\mathbb{T})}=\int_{\mathbb{T}} f(x)\overline{g(x)}\, dx$. The Sobolev space $H^s(\mathbb{T})\, (s\in \mathbb{R})$ is equipped with standard norm $\|u\|_{H^s}^2:=\sum_{k\in \mathbb{Z}} \langle k\rangle^{2s}|\hat{u}(k)|^2$, where $\langle x\rangle:=\sqrt{1+|x|^2}$. When there is no danger of ambiguity, we simply write $H^s$ for $H^s(\mathbb{T})$.

We write $S_a(t)$ (and $S(t)$) for the $C_0$-group of operators on $H^1(\mathbb{T})$ generated by $i\partial_x^2-a(x)$ (and $i\partial_x^2$, respectively). In other words, the solution of $iu_t+\Delta u+ia(x)u=0$ is $u(t)=S_a(t)u_0$.

\medskip

\noindent $\bullet$ \textit{Random variables.} Let $X$ be a Polish space (i.e.~separable metric space). The distance from $x\in X$ to $A\subset X$ is $\dist_X(x,A)=\inf\{d(x,a):a\in A\}$. The Borel $\sigma$-algebra is $\mathcal{B}(X)$. The space of bounded continuous functions on $X$ is $C_b(X)$, equipped with the supremum norm $\|f\|_\infty=\sup_X |f|$. And bounded Lipschitz functions constitute $L_b(X)$, with norm $\|f\|_{L_b(X)}=\|f\|_\infty+\sup_{x\not =y} \frac{|f(x)-f(y)|}{d(x,y)}$.

The law of an $X$-valued random variable $\eta$ is denoted by $\mathscr{D}(\eta)$, which belongs to the space of probability measures $\mathcal{P}(X)$. In the context, we use $\eta$ and its variants $\eta',\hat{\eta},\tilde{\eta}$ to denote random elements, and employ the letter $\zeta$ for deterministic ones to avoid possible confusions. A Borel map $f\colon X\to Y$ between Polish spaces pushes $\mu\in \mathcal{P}(X)$ forward to $f_* \mu\in \mathcal{P}(Y)$ via $f_*\mu(\cdot)=\mu (f^{-1}(\cdot))$. In particular, the law of $f(\eta)$ coincides with $f_* \mathscr{D}(\eta)$.

The weak convergence in $\mathcal{P}(X)$ can be metrized by the dual-Lipschitz distance:
\begin{equation}\label{dualLip}
    \|\mu-\nu\|_L^*:=\sup_{\|f\|_{L_b(X)}\le 1} |\langle f,\mu\rangle-\langle f,\nu\rangle|,\quad \mu,\nu\in \mathcal{P}(X).
\end{equation}
A coupling between $\mu$ and $\nu$ is a pair of $X$-valued random variables with marginal distributions equal to $\mu$ and $\nu$, respectively. The set of all couplings between $\mu$ and $\nu$ is denoted by $\mathscr{C}(\mu,\nu)$.

\medskip

\noindent $\bullet$ \textit{Functional analysis.} Throughout this paper, we regard any complex Hilbert space $X$  as a real Hilbert space, by replacing the complex inner product $(\cdot,\cdot)$ with $\re\, (\cdot,\cdot)$.\footnote{With complex-valued NLS equations in mind, the scalar field for Hilbert spaces in consideration should be $\mathbb{C}$ rather than $\mathbb{R}$. However, the solution map is not complex-differentiable, due to the presence of complex conjugation in nonlinear term $|u|^{p-1} u$. For this reason, we need to allow real-linear maps in the sequel.} Denote by $B_X(R)$ the open ball of radius $R$ centered at the origin. We write $\mathcal{L}(X,Y)$ for  bounded (real-)linear operators from $X$ to another Hilbert space $Y$, and simply write $\mathcal{L}(X)$ when $Y=X$.

\medskip

\noindent $\bullet$ \textit{Constants.} Various constants $C$ may change from line to line. The dependence on parameters are indicated by $C(\cdot)$ or displayed in the subscript.

\section{Mixing of random dynamical systems}\label{sec_RDS}

In this section we establish an abstract EAC-based criterion for exponential mixing, which is inspired by \cite{KNS-20,LWXZZ-24,CXZZ-25} and forms the probabilistic backbone of this paper. In order to compensate for the lack of smoothing effect, we propose asymptotic compactness of linearization as a new element, which allows us to enhance the classical coupling approach. We will apply this criterion in Section~\ref{sec_applyNLS} to conclude the Main Theorem.

The schematic road map for this criterion can be depicted as in Figure~\ref{fig_outline}. A more detailed overview is presented in Section~\ref{subsec_overviewRDS}

\begin{figure}[t]
    \centering
    \begin{tikzpicture}[node distance=1.5cm]
\footnotesize {
\node (p-inv) [process] {Approximate inverse (Lem~\ref{lem_inverse})};
\node (approx-ctrl) [process, below of=p-inv] {Control property (Lem~\ref{lem_approxcontrol})};
\node (coupling) [process, below of=approx-ctrl] {Coupling method (Lem~\ref{lem_coupling})};
\node (EMonY) [process, below of=coupling] {Exp.~mixing on $Y$ (Prop~\ref{prop_restricY})};
\node (EMonX) [process, below of=EMonY] {Exp.~mixing on $X$ (Thm~\ref{thm_criterion})};

\node (pt1) [point, left of=approx-ctrl, xshift=-1.32cm]{};
\node (pt2) [point, left of=EMonY, xshift=-1.3cm]{};

\draw[decorate,decoration={calligraphic brace,amplitude=4mm,mirror}, thick] (pt1) -- (pt2) node[black,midway,xshift=-2.2cm, align=center]{
Hypotheses $(\mathbf{H3})$--$(\mathbf{H5})$};

% \node (pt3) [point, left of=EMonY, xshift=-1.32cm, yshift=-0.05cm]{};
% \node (pt4) [point, left of=EMonX, xshift=-1.3cm]{};

% \draw[decorate,decoration={calligraphic brace,amplitude=4mm,mirror}, thick] (pt3) -- (pt4) node[black,midway,xshift=-2.5cm, align=center]{Hypothesis $(\mathbf{H1})$ EAC\\Same as \cite[Proposition 2.4]{LWXZZ-24}};

\draw [arrow](p-inv) -- node [right, align=left] {$(\mathbf{H2})$ Asymptotic compactness of linearization} (approx-ctrl);
\draw [arrow](approx-ctrl) -- (coupling);
\draw [arrow](coupling) -- (EMonY);
\draw [arrow] (EMonY) -- node [right, align=left] {$(\mathbf{H1})$ Exponential asymptotic compactness} (EMonX);

% \node (new) [idea, right of=coupling, xshift=3cm] {New ingredients};
% \node (pt3) [point, right of=approx-ctrl, xshift=3cm, yshift=1cm]{};
% \node (pt4) [point, right of=EMonY, xshift=3cm, yshift=-0.5cm]{};
% \draw [arrow](new) -- (pt3);
% \draw [arrow](new) -- (pt4);
}
\end{tikzpicture}
\caption{Outline for the general criterion. We point out that the first step involves the new concept ``asymptotic compactness of linearization". Meanwhile, the idea of two implications in the middle are similar to parabolic PDEs (see, e.g., \cite{Shi-15,Shi-21,KNS-20,KNS-20-1}), and the last step relying on EAC follows from \cite{LWXZZ-24}.}
\label{fig_outline}
\end{figure}
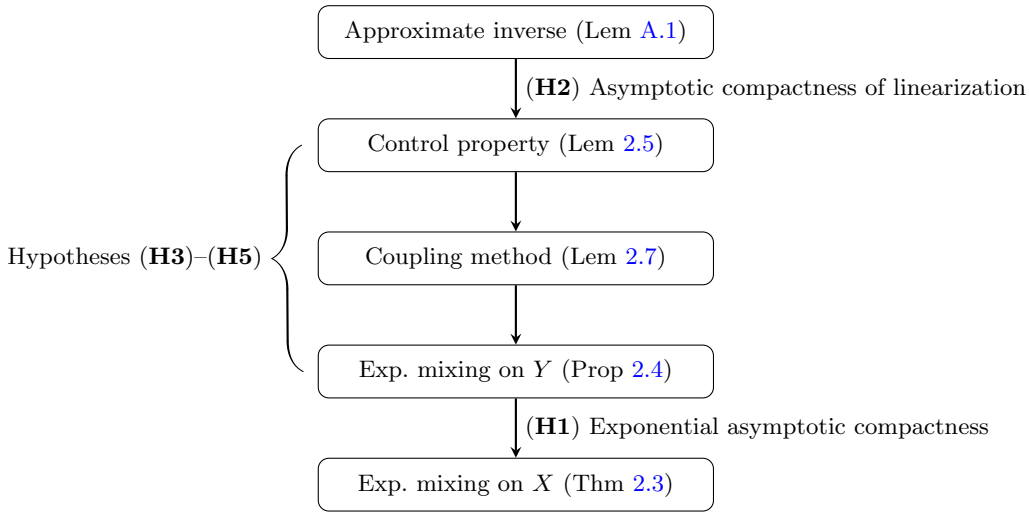

\subsection{New general criterion}\label{subsec_RDSsetting}

% Randomly force PDEs naturally give rise to random dynamical systems, which, in turn, lead to Markov processes; see, e.g., \cite{KS-12}. 
Our setup for random dynamical systems consists of the following data, which is a modification of those in \cite{KNS-20,LWXZZ-24,CXZZ-25}:

\begin{itemize}
    \item Separable real Hilbert spaces $(X,\|\cdot\|)$, $(E,\|\cdot\|_E)$ and $(V,\|\cdot\|_V)$. Here $X$ stands for the phase space, $E$ represents the noise space, and $V$ is compactly embedded into $X$. In addition, we assume $Y$ is a compact subset of $X$.\footnote{We inform the reader to distinguish compact subset $Y$ and the Hilbert space $V$ compacted embedded in $X$. A typical situation is $Y$ being a bounded subset of $V$. But this is not necessarily the case. In our application to NLS model (see Section~\ref{sec_applyNLS}), $X=H^1(\mathbb{T})$, and $Y$ is bounded in $H^3(\mathbb{T})$, while $V=H^{5/4}(\mathbb{T})$.}  Both $V$ and $Y$ reflect asymptotic compactness of the system from different aspects; see hypotheses $(\mathbf{H1})$ and $(\mathbf{H2})$.
    
    \item The evolution is characterized by a smooth (in Fr\'echet sense) map $S\colon X\times E\to X$. We assume that $S$ has bounded second-order derivatives on every bounded subset of $X \times E$, and specifically $S$ is locally Lipschitz. We denote the partial derivatives with
    \[D_x S(x,\zeta)\colon X\to X\quad \text{and}\quad D_{\zeta} S(x,\zeta)\colon E\to X.\]
    
    % For the sake of clarity, we remind the reader that, although $X$ and $E$ may be complex Hilbert spaces, we deliberately forget the complex structure and treat them as Hilbert spaces over $\mathbb{R}$ (see the Convention in Section~\ref{subsec_notation}). In particular, $D_x S(x,\zeta)$ and $D_\zeta S(x,\zeta)$ are merely real-linear operators.

    \item A sequence of $E$-valued i.i.d.~random variable $(\eta_n)_{n\in \mathbb{N}}$, with common law $\ell\in \mathcal{P}(E)$. We assume the support of $\ell$, denoted with $K=\supp(\ell)$, is compact in $E$.
\end{itemize}

Since $S$ is continuous and $\eta_n$ are i.i.d., the random dynamical system generates a Feller family of discrete Markov processes $(x_n,\mathbb{P}_x)$ on $X$ (see, e.g., \cite[Section 1.3]{KS-12}), which is formulated by
\begin{equation}\label{Markovchain}
    x_n=S(x_{n-1},\eta_n),\quad x_0=x\in X.
\end{equation}
We also introduce a self-explanatory notation to indicate initial state and random input:
\[x_n=S_{n}(x;\eta_1,\dots,\eta_n).\]

For this Markov chain $(x_n,\mathbb{P}_x)$, we denote the corresponding expected values by $\mathbb{E}_x$, and the Markov transition probabilities by $P_n(x,\cdot )$, i.e.~$P_n(x,B):=\mathbb{P}_x(x_n\in B)$ for $B\in\mathcal{B}(X)$. The standard notation for the Markov semigroup $P_n\colon C_b(X)\rightarrow C_b(X)$ is employed:
    \begin{equation*}
        P_nf(x):= \int_X f(x')\, P_n(x,dx')\quad \text{for }f\in C_b(X).
    \end{equation*}
    And $P^*_n\colon \mathcal{P}(X)\rightarrow \mathcal{P}(X)$ refers to the dual semigroup, which is defined via
    \[P_n^* \mu (B)=\int_X P_n(x,B)\, \mu(dx)\quad \text{for }B\in \mathcal{B}(X).\]
    In particular, for any initial state $x\in X$, we have $\mathscr{D}(x_n)=P_n^* \delta_{x}$. For convenience, we often omit the subscript when $n=1$, and write $P=P_1$ and $P^*=P_1^*$. A probability measure $\mu\in \mathcal{P}(X)$ is called {\it invariant} for this Markov process if $P^*\mu=\mu$. And we say a subset $A\subset X$ is invariant, if
    \[S(x,\zeta)\subset A\quad \text{for any } x\in A,\ \zeta\in K.\]

    \medskip
        
    Below is a list of hypotheses regarding our criterion. Without loss of generality, we assume the parameters $q_0\in (0,1)$ involved in hypotheses $(\mathbf{H1})$--$(\mathbf{H3})$ are the same.

\begin{itemize}
    \item[(\textbf{H1})] \textbf{Exponential asymptotic compactness (EAC).} The compact subset $Y\subset X$ is invariant, and there exists
    a constant $q_0\in (0,1)$, and an increasing function $G\colon [0,\infty)\to [0,\infty)$, such that for any $x\in X$, $n\in \mathbb{N}$, and $\zeta_1,\dots ,\zeta_n\in K$, we have
    \[\dist_X (S_n(x;\zeta_1,\dots, \zeta_n),Y)\le q_0^n G(\|x\|).\]

    \item[(\textbf{H2})]  \textbf{Asymptotic compactness of linearization.} There exist constants $C_0>0$, $q_0\in (0,1)$, and a map $T\colon Y\times K\to \mathcal{L}(X,V)$\footnote{Since $V\hookrightarrow X$, with a slight abuse of notation, $T(y,\zeta)$ is also a member of $\mathcal{L}(X)$ (see, e.g., \eqref{ACL-2} and \eqref{ACL-3}).}, such that for any $y\in Y$ and $\zeta\in K$, we have
    \begin{equation}\label{ACL-1}
        \|T(y,\zeta)\|_{\mathcal{L}(X,V)}\le C_0,
    \end{equation}\begin{equation}\label{ACL-2}
        \|D_y S(y,\zeta)-T(y,\zeta)\|_{\mathcal{L}(X)}\le q_0.
    \end{equation}
    Moreover, with respect to the $\|\cdot \|_{\mathcal{L}(X)}$-norm, $T(y,\cdot)$ is Lipschitz-continuous:
    \begin{equation}\label{ACL-3}
        \|T(y,\zeta)-T(y,\zeta')\|_{\mathcal{L}(X)}\le C_0 \|\zeta-\zeta'\|_E\quad \text{for any }y\in Y,\ \zeta,\zeta'\in K.
    \end{equation}

    \item[(\textbf{H3})] \textbf{Dissipativity.}
    There exists $\tilde{y}\in Y$, $m_0\in \mathbb{N}$, $q_0\in (0,1)$ and $\tilde{\zeta}_1,\dots ,\tilde{\zeta}_{m_0}\in K$, such that
    \begin{equation}\label{Dis-1}
        \|S_{m_0}(y;\tilde{\zeta}_1,\dots ,\tilde{\zeta}_{m_0})-\tilde {y}\|\le q_0\|y-\tilde{y}\|\quad \text{for any }y\in Y.
    \end{equation}

    \item[(\textbf{H4})] \textbf{Approximate controllability along trajectory.} For any $y\in Y$, define
    \begin{equation}\label{K^ydef}
        K^y:=\{\zeta\in K:D_\zeta S(y,\zeta)\colon E\to X\text{ has dense image in }X\}.
    \end{equation}
    Then we have $\ell(K^y)=1$ (recall $\ell=\mathscr{D}(\eta_n)$).

    \item[(\textbf{H5})] \textbf{Decomposability of the noise.} The random noise $\eta_n$ has the structure
    \[\eta_n=\sum_{k\in \mathbb{N}} b_k\xi_{kn} \psi_k.\]
    Here $(\psi_k)_{k\in \mathbb{N}}$ is an orthonormal basis of $E$, the constants $b_k>0$ satisfy $\sum_{k\in \mathbb{N}} |b_k|^2<\infty$, 
    and $\xi_{kn}$ are independent real-valued random variables possessing Lipschitz-continuous density function $\rho_k$ supported in $[-1,1]$ with respect to the Lebesgue measure on $\mathbb{R}$.\footnote{Note that $(\mathbf{H5})$ ensures the support of $\ell\in \mathcal{P}(E)$ to be compact, via the diagonal argument.}
\end{itemize}

% Before presenting our general criterion, we compare our hypotheses to those in \cite{KNS-20}.

\begin{remark}
    The core distinction between our criterion and existing ones for parabolic PDEs lies in the regularity assumption. More precisely, {\rm\cite{KNS-20}} assumes that the system evolution exhibits a gain in regularity, in the sense that the $S$ maps $X\times E$ to $V$. In contrast, to deal with dispersive PDEs, we introduce new asymptotic compactness conditions $(\mathbf{H1})$ and $(\mathbf{H2})$. Specifically, the operator $T\in \mathcal{L}(X,V)$ serves as a compact approximation of the non-compact operator $D_y S(y,\zeta)$.
\end{remark}

\begin{remark}
    We refer the reader to Figure~{\rm\ref{fig_strategy}} for the relation between these hypotheses to the NLS model. The corresponding PDE properties will be established in Sections~{\rm \ref{sec_globaldynamic}}--{\rm \ref{sec_geometriccontrol}}. 
\end{remark}

\medskip

Now we state the general criterion for exponential mixing.

\begin{theorem}\label{thm_criterion}
    Under the above settings, assume the hypotheses $(\mathbf{H1})$--$(\mathbf{H5})$ are valid. Then the Markov process $(x_n,\mathbb{P}_x)$ defined by \eqref{Markovchain} admits a unique invariant measure $\mu\in \mathcal{P}(X)$. Moreover, the support of $\mu$ is contained in $Y$, and there exist constants $C,\gamma>0$ such that
    \begin{equation}\label{EM-1}
        \|P_n^* \lambda-\mu\|_L^*\le Ce^{-\gamma n}\left(1+\int_X G(\|x\|)\, \lambda(dx)\right)\quad \text{for any }\lambda\in \mathcal{P}(X),\ n\in \mathbb{N}.
    \end{equation}
\end{theorem}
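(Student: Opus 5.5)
The proof follows the road map of Figure~\ref{fig_outline}: first establish exponential mixing for the process restricted to the compact invariant set $Y$, then transfer it to all of $X$ using (H1).

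\textbf{Step 1: approximate inverse.} I would build an approximate right inverse of $D_\zeta S(y,\zeta)$ that is uniform on compact pieces of phase space. Fix $y\in Y$ and $\zeta\in K^y$. By (H2), write $D_yS(y,\zeta)=T(y,\zeta)+R$ with $\|R\|_{\mathcal{L}(X)}\le q_0$.

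The set $T(y,\zeta)(B_X(1))$ is bounded in $V$, hence precompact in $X$. This compactness replaces the smoothing used in \cite{KNS-20}. Since $D_\zeta S(y,\zeta)$ has dense image by (H4), for every $\varepsilon>0$ I would use a Moore--Penrose type regularization to find a bounded operator $A_\varepsilon(y,\zeta)\colon X\to E$ with
\[
\|D_\zeta S(y,\zeta)A_\varepsilon T(y,\zeta)-T(y,\zeta)\|_{\mathcal{L}(X)}\le\varepsilon
\]
and $\|A_\varepsilon\|$ finite.

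The Lipschitz dependence \eqref{ACL-3}, together with the bounded second derivatives of $S$ and the compactness of $Y\times K$, then lets me choose $A_\varepsilon$ measurably. I would also arrange uniform bounds on sets $Y\times K_\varepsilon$ with $\ell(K_\varepsilon)\ge 1-\varepsilon$, using a Lusin/Egorov-type argument. This is Lemma~\ref{lem_inverse}.

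\textbf{Step 2: control and coupling.} For $y,y'\in Y$ close to each other, set $\zeta'=\zeta+A_\varepsilon T(y,\zeta)(y-y')$ and Taylor expand:
\[
S(y',\zeta')-S(y,\zeta)=R(y'-y)+\big(T-D_\zeta SA_\varepsilon T\big)(y'-y)+O(\|y-y'\|^2).
\]
This gives $\|S(y',\zeta')-S(y,\zeta)\|\le (q_0+\varepsilon+C\|y-y'\|)\|y-y'\|\le q\|y-y'\|$ for some $q<1$, provided $\zeta\in K_\varepsilon$. This is the control property, Lemma~\ref{lem_approxcontrol}.

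Next, (H5) lets me compare the laws $\ell$ and $(\Phi)_*\ell$, where $\Phi\colon\zeta\mapsto\zeta'$. The shift is small, of size $O(\|y-y'\|)$, and $\Phi$ is Lipschitz in $\zeta$. Since the densities $\rho_k$ are Lipschitz, the total variation distance is $\lesssim\|y-y'\|^{\alpha}$ plus $\varepsilon$. The noise is infinite-dimensional, so I would apply this only to finitely many coordinates and absorb the remaining tail using $\sum b_k^2<\infty$.

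A maximal coupling then yields Lemma~\ref{lem_coupling}: for close pairs in $Y$, with probability at least $1-C\|y-y'\|^\alpha$, the distance contracts by the factor $q$.

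\textbf{Step 3: mixing on $Y$.} Combine this squeezing with the recurrence from (H3). Along the fixed controls $\tilde\zeta_j$, any two trajectories are brought close to $\tilde y$. By continuity of $S$ and compactness of $K$, every point of $Y$ reaches a neighbourhood of $\tilde y$ with probability bounded below, and both trajectories do so simultaneously. The standard Kuksin--Shirikyan coupling scheme, using a Lyapunov-free argument on the compact set $Y$, then gives exponential mixing on $Y$. This is Proposition~\ref{prop_restricY}, and the unique invariant measure $\mu$ is supported in $Y$.

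\textbf{Step 4: transfer to $X$.} This follows \cite[Proposition 2.4]{LWXZZ-24}. For $x\in X$, by (H1) the point $x_{n/2}$ lies within $q_0^{n/2}G(\|x\|)$ of a point $y\in Y$. Run the dynamics from $x_{n/2}$ and from $y$ with the same noise for the remaining $n/2$ steps; the local Lipschitz property on the bounded set gives a controlled error. Applying the mixing on $Y$ from $y$ then yields \eqref{EM-1}, after integrating over $\lambda$.

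The accumulated Lipschitz constant over $n/2$ steps could overwhelm the factor $q_0^{n/2}$. To handle this I would split the time as $n=\theta n+(1-\theta)n$ and use (H1) at all times, so that the two compared trajectories both stay near $Y$, trading rates by choosing $\theta$ small.

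The main obstacle is Step 1 combined with the measurability and uniformity required in Step 2. The approximate inverse must be uniform in $y\in Y$ and in $\zeta$ over a large-measure set, even though density of the image of $D_\zeta S$ holds only pointwise. Compactness is available only through $T$, so the Lipschitz estimate \eqref{ACL-3} is essential for the perturbation argument.
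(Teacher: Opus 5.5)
Your road map (approximate inverse, control, coupling, mixing on $Y$, transfer via $(\mathbf{H1})$) is the paper's. Your control $\zeta'=\zeta+A_\varepsilon T(y,\zeta)(y-y')$, with the compact $T$ replacing $D_yS$ and the remainder absorbed by \eqref{ACL-2}, is exactly the key idea of Lemma~\ref{lem_contoltoy}.

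The genuine gap is the passage from control to coupling in Step~2. A bound $\|\ell-\Phi_*\ell\|_{TV}\lesssim\|y-y'\|^\alpha+\varepsilon$ is not enough. The probability that the maximal coupling fails must be $O(\|y-y'\|^{1/2})$ with no additive term, as in \eqref{ApproxCtrl-2} and \eqref{coupling-2}. On that event the two copies are driven by unrelated noises and may end up at distance $\gtrsim d$, so it costs $\approx d^{1/2}$ in the Kantorovich estimate, while $F(y,y')\approx\|y-y'\|^{1/2}\to0$. With an extra $\varepsilon$, you cannot obtain \eqref{KF-3} for pairs with $\|y-y'\|^{1/2}\ll\varepsilon d^{1/2}$.

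The additive error comes from the discontinuity of $\Phi$ across the boundary of the good set. Removing it is the technical heart of Lemma~\ref{lem_approxcontrol}, and a good set produced by a Lusin/Egorov argument gives no quantitative control on the $\ell$-mass of thin neighbourhoods of its boundary. The paper instead defines the good set as a sublevel set of the smooth function $\mathfrak{F}_\varepsilon$ (a finite sum over a net of $B_V(1)$). It cuts at levels $\theta_{\varepsilon,l}$ that are regular values on finitely many pieces \cite[Lemma 7.5]{NZZ-24}. Then \cite[Theorem 2.4]{KNS-20} together with \cite[Corollary 7.7]{NZZ-24} gives the pure bound $C_\sigma\|y-x\|^{1/2}$.

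Moreover, the tail cannot be ``absorbed'' in total variation. A shift of size $\delta$ along a single $\psi_k$ with $b_k<\delta/2$ already produces a measure singular to $\ell$. The control must therefore lie in finitely many modes. The paper ensures this by building $P_{M_\varepsilon}$ into $R_\varepsilon$ while keeping \eqref{inverse-2} uniform on $D_\varepsilon$.

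Relatedly, the coupling you state in Step~2 (contraction with probability $\ge 1-C\|y-y'\|^\alpha$) is not available. Consequently the standard Kuksin--Shirikyan squeezing scheme you invoke in Step~3 does not apply. The uniform approximate inverse exists only for $\zeta$ in a set of $\ell$-measure $1-\sigma$ with $\sigma$ fixed, and the constants have no quantitative control as $\sigma\to0$. So contraction fails with probability of order $\sigma$ however close $y$ and $y'$ are.

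The paper's remedy is to apply no control off the good set ($\Phi^{y,x}=0$ there). On that event both copies use the same noise and separate at most by the factor $q_2^{-L}$ from \eqref{LipS}, so only failure of the maximal coupling can cause a large jump. This yields the two-tier Lemma~\ref{lem_coupling}: contraction with probability $\ge 1-\nu$, and large jumps with probability $\le C_\nu\|y-y'\|^{1/2}$. That weaker coupling in turn needs the KNS-type Kantorovich function built on the geometric levels $\mathbf{Y}_n$, iterated over $m_0$ steps.

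A minor point in Step~4: the $(\mathbf{H1})$-phase must be the long one. The Lipschitz growth over the mixing phase of length $k$ has to be beaten by $q_0^{\,n-k}$, so $k$ must be a small fraction of $n$.
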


Inspired by \cite{LWXZZ-24}, it turns out that owing to the EAC hypothesis $(\mathbf{H1})$, the problem can be reduced to the compact invariant set $Y$. More precisely, to prove Theorem~\ref{thm_criterion}, it suffices to establish the following proposition with $\lambda$ specified as $\delta_{y_0}$.

\begin{proposition}\label{prop_restricY}
    Under the assumptions of Theorem~{\rm\ref{thm_criterion}}, consider the restriction of the Markov process \eqref{Markovchain} to $Y$, denoted by $(y_n,\mathbb{P}_y)$. Then there exists a unique invariant measure $\mu\in \mathcal{P}(Y)$. Moreover, there exist constants $C,\gamma>0$ such that
    \begin{equation}\label{EM-2}
        \|\mathscr{D}(y_n)-\mu\|_L^*\le Ce^{-\gamma n}\quad \text{for any }y_0\in Y,\ n\in \mathbb{N}.
    \end{equation}
\end{proposition}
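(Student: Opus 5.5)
We would prove Proposition~\ref{prop_restricY} by the coupling method of \cite{KNS-20}, following the road map of Figure~\ref{fig_outline}. The first step is an \emph{approximate inverse} (Lemma~\ref{lem_inverse}). Fix $y\in Y$ and $\zeta\in K^y$. We look for a bounded operator $R=R(y,\zeta)\colon X\to E$ with the property that
\[\|D_yS(y,\zeta)v+D_\zeta S(y,\zeta)Rv\|\le q\|v\|,\qquad v\in X,\]
for some fixed $q<1$. By $(\mathbf{H2})$ we write $D_yS=T+(D_yS-T)$, where the second piece has norm at most $q_0$. The piece $T(y,\zeta)$ maps bounded sets of $X$ into bounded sets of $V$, hence into a compact subset of $X$. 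Because the image of $D_\zeta S(y,\zeta)$ is dense by $(\mathbf{H4})$, a Moore--Penrose type regularized pseudo-inverse $R_\gamma=-(D_\zeta S)^*(D_\zeta S(D_\zeta S)^*+\gamma)^{-1}$ therefore compensates $T$ up to an arbitrarily small error on this compact set. This gives the bound above with $q=(1+q_0)/2$.

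The norm of $R_\gamma$ cannot be controlled uniformly in $(y,\zeta)$. We handle this as in \cite{KNS-20}: for $\varepsilon>0$ we take a compact set $K_\varepsilon\subset K$ with $\ell(K\setminus K_\varepsilon)<\varepsilon$, on which $\|R\|\le C_\varepsilon$ holds for all $y\in Y$. Measurability and compactness come from continuity of $T(y,\cdot)$, via \eqref{ACL-3}, together with compactness of $Y$.

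The second step is the \emph{control property} (Lemma~\ref{lem_approxcontrol}). For $y,y'\in Y$ with $d=\|y-y'\|$ small and $\zeta\in K_\varepsilon$, we set $\Phi(y,y',\zeta)=\zeta+R(y,\zeta)(y'-y)$. Taylor expansion, using the bounded second derivatives of $S$, gives
\[\|S(y,\zeta)-S(y',\Phi)\|\le qd+C_\varepsilon d^2.\]
The next point is that $\Phi_*\ell$ is close to $\ell$ in total variation, with $\|\Phi_*\ell-\ell\|_{var}\le C_\varepsilon d+\varepsilon$. For this we truncate $R$ to finitely many coordinates $\psi_1,\dots,\psi_N$, at the cost of an error again absorbed by compactness. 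The decomposability $(\mathbf{H5})$ then reduces the estimate to shifts of Lipschitz densities in finitely many dimensions, as in \cite{KNS-20}.

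The third step is the \emph{coupling} (Lemma~\ref{lem_coupling}). Using the maximal coupling of $\ell$ and $\Phi_*\ell$, we build random variables $(\mathcal{R},\mathcal{R}')$ with marginals $P(y,\cdot)$ and $P(y',\cdot)$. They satisfy $\mathbb{P}(\|\mathcal{R}-\mathcal{R}'\|>\tfrac{1+q}{2}d)\le Cd^{\alpha}+\varepsilon$, and here $\varepsilon$ can be chosen to be a power of $d$ if $K_\varepsilon$ is chosen depending on $d$. Iterating this gives contraction of distances with summable failure probabilities, provided the initial distance is below some $\delta$.

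We conclude by the standard Kuksin--Shirikyan scheme. The recurrence part uses $(\mathbf{H3})$: by continuity and $\ell(B(\tilde\zeta_j,r))>0$, any two points of $Y$ enter the $\delta$-neighbourhood of $\tilde y$ simultaneously after $km_0$ steps with probability bounded below, uniformly in the starting points. Combined with the squeezing from the coupling, this yields exponential mixing in the dual-Lipschitz metric, and hence existence and uniqueness of $\mu$ on the compact set $Y$. The main obstacle is the first step: obtaining a \emph{uniform in $(y,\zeta)$ on large-measure sets} bound for the approximate inverse when $D_yS$ itself is not compact. Our plan there is to use the Lipschitz dependence \eqref{ACL-3} of $T$ and a finite $\epsilon$-net of the compact set $Y\times K$ for the finitely many directions needed.
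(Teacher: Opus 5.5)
There is a genuine gap in your second and third steps, in the quantitative form of the coupling. Your total-variation bound $\|\Phi_*\ell-\ell\|_{var}\le C_\varepsilon d+\varepsilon$ carries an additive $\varepsilon$, which comes from cutting off $\Phi$ at the boundary of $K_\varepsilon$. You propose to remove it by taking $\varepsilon=\varepsilon(d)$ a power of $d$, so that the squeezing failure probability is $\le Cd^\alpha$ and hence summable along $d_n=\rho^n d_0$. That requires $C_\varepsilon$ to grow at most polynomially in $\varepsilon^{-1}$. Here $C_\varepsilon$ is the bound on the approximate inverse on $K_\varepsilon$, and it also governs the Lipschitz constant of $\Phi$ and the Taylor remainder $C_\varepsilon d^2$. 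Nothing in the hypotheses gives such growth control. $(\mathbf{H4})$ is purely qualitative (dense image for $\ell$-a.e.\ $\zeta$), and the constants $\gamma_\varepsilon,M_\varepsilon,C_\varepsilon$ of Lemma~\ref{lem_inverse} come from a compactness argument with no rate. A rate would need a quantitative, cost-of-approximate-control version of Proposition~\ref{prop_H4}, which the geometric-control argument does not supply. So at best your failure probability tends to $0$ as $d\to0$ without a rate. That is not summable along a geometric sequence of distances, and the classical Kuksin--Shirikyan squeezing scheme does not close. If instead you keep $\varepsilon$ fixed, your bound controls the probability of \emph{complete decoupling} only by a quantity that does not vanish as $\|y-y'\|\to0$. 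The Kantorovich-functional argument cannot absorb that either, since it needs this probability to be $O(\|y-y'\|^{\beta})$ with the same exponent as in $F$.

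The paper avoids any rate in $\varepsilon$ by organizing things differently, following \cite{KNS-20,NZZ-24}.
\begin{enumerate}
\item It fixes $\sigma$, and hence all constants, once and for all.
\item It takes $K^{y,\sigma}$ to be a union of sublevel sets $\{\mathfrak{F}_\varepsilon(y,\cdot)\le\theta_{\varepsilon,l}\}$, with levels chosen so that $0$ is a regular value of \eqref{regularmap}.
\item It sets $\Phi^{y,x}=0$ outside $K^{y,\sigma}$.
\end{enumerate}
The regular-value property controls the measure of the boundary layer of $K^{y,\sigma}$. Then \cite[Theorem 2.4]{KNS-20} yields \eqref{ApproxCtrl-2}, namely $C_\sigma\|y-x\|^{1/2}$ with \emph{no} additive term.

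The coupling of Lemma~\ref{lem_coupling} then has two tiers of failure. The first is a mild failure of probability $\le\nu$. Because $\Phi=0$ there, the distance grows at most by $q_2^{-L}$, with $L$ \emph{independent of $\nu$}. The second is a genuine decoupling of probability $\le C_\nu\|y-y'\|^{1/2}$. The level-indexed functional $F$ absorbs the mild failures because $q_2^{m_0/2}+2m_0\nu q_2^{-m_0L/2}<1$ for small $\nu$, and the $(\mathbf{H3})$-recurrence is handled in Case~3.

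Two smaller points.
\begin{enumerate}
\item A $y$-independent set $K_\varepsilon$, as you claim, is not available from $(\mathbf{H4})$. The null sets $K\setminus K^y$ depend on $y$, and their union over $y\in Y$ can be all of $K$. Sets depending on $y$ suffice, however.
\item \eqref{ACL-3} is not what gives uniformity of the approximate inverse. That uniformity comes from compactness of $B_V(1)$ in $X$ and continuity of $D_\zeta S$, as in \cite[Theorem 2.8]{KNS-20}. What \eqref{ACL-3} gives is the Lipschitz dependence of $\zeta\mapsto\Phi^{y,x}(\zeta)$ that the total-variation estimate requires.
\end{enumerate}
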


Indeed, the derivation of Theorem~\ref{thm_criterion} from Proposition~\ref{prop_restricY} and hypothesis $(\mathbf{H1})$ is verbatim as \cite[Proposition 2.4]{LWXZZ-24}, and is hence omitted for the sake of brevity.\footnote{We mention that the invariant measures $\mu$ in \eqref{EM-1} and \eqref{EM-2} are the same, while the exponential rates $\gamma>0$ may differ. The careful reader might find the notion $\|\cdot\|_L^*$ in \eqref{EM-2} ambiguous without specifying whether this is the dual metric of $L_b(X)$ or $L(Y)$. In fact, the two of them give rise to the same distance on $\mathcal{P}(Y)$, thanks to McShane’s lemma that any Lipschitz function on $Y$ admits an extension to $X$ with Lipschitz norm preserved.} 

\medskip

In the rest of this section, we concentrate on the compact invariant subset $Y$ and the restricted Markov chain $(y_n,\mathbb{P}_y)$. Since $Y$ is compact, the standard Krylov--Bogolyubov method (see, e.g., \cite[Section 2.5]{KS-12}) yields the existence of invariant measures. The non-trivial parts of Proposition~\ref{prop_restricY} lie in the uniqueness and mixing property \eqref{EM-2}. The proof consists of two steps:

\begin{itemize}
    \item In Section~\ref{subsec_approximatectrl}, we exploit the new asymptotic compactness hypothesis $(\mathbf{H2})$ to establish a control result (Lemma~\ref{lem_approxcontrol}). This observation is the core novelty of our criterion.

    \item In Section~\ref{subsec_reduction}, we adapt and refine techniques from \cite{KNS-20,NZZ-24} to derive exponential mixing on the compact subset $Y$ from control results.
\end{itemize}
Before carrying out all the details, let us outline the strategy of proof.

\subsection{Overview of proof}\label{subsec_overviewRDS}

We begin by recalling a widely-used strategy for establishing mixing in the presence of a smoothing effect, which typically occurs in parabolic PDEs. Then we highlight the difficulties when attempting to adapt this approach to dispersive equations, and illustrate the key ideas that allow us to overcome them.

\subsubsection{Widely-used strategy for parabolic systems}

Since the works of Doeblin and Harris, it has been understood that mixing for Markov processes can be deduced from two properties: 

\begin{itemize}
    \item Irreducibility, typically a consequence of the system's dissipation mechanism; and

    \item A coupling condition, ensuring that trajectories draw closer in a probabilistic sense.
\end{itemize}
Concerning random PDEs, global dissipation yields irreducibility, while the coupling condition often follows from control theory. This paradigm can be summarized as:
\[\text{control }\Rightarrow \text{ coupling }\Rightarrow\text{ mixing}\]

Taking the 2D Navier--Stokes system for example, Hairer--Mattingly \cite{HM-06,HM-08,HM-11} exploited hypoellipticity and Malliavin calculus when the noise is white-in-time, Shirikyan \cite{Shi-15,Shi-21} clarified the connection between mixing and control when studying spatially localized or boundary noises, and Kuksin--Nersesyan--Shirikyan \cite{KNS-20} extends the geometric control method to handle noises of random Haar series. Our present paper is particularly inspired by \cite{KNS-20}, which paves the way from a control result to the mixing on compact space $Y$; see Section~\ref{subsec_reduction}.

% 和文献综述有点重复

\subsubsection{Challenges for hyperbolic equations: lack of regularization}

As observed in \cite{LWXZZ-24,CXZZ-25,CLXZ-24}, the lack of smoothing effect causes essential difficulties in many aspects. Two major downsides are:

\begin{itemize}
    \item[(1)] Many probabilistic methods rely on the compactness of the phase space. Moreover, the control property often requires extra regularity (see, e.g., \cite[Section 4]{CXZZ-25}). The compensations for compactness and regularity are not evident without smoothing effect.
\end{itemize}

This first difficulty can be addressed by introducing the concept of EAC from \cite{LWXZZ-24}. In this work, we harness the EAC hypothesis $(\mathbf{H1})$ for a compactness reduction from Theorem~\ref{thm_criterion} to Proposition~\ref{prop_restricY}. Nevertheless, the second obstacle is even more subtle:

\begin{itemize}
    \item[(2)]  Although we assumed the approximate controllability $(\mathbf{H4})$, quantitative control estimates are needed to imply the desired coupling condition. To this end, \cite{KNS-20} exploited the smoothing effect of the linearized operator, which fails in our setting.
\end{itemize}

Despite the lack of a direct derivative gain, we find it possible to combine the asymptotic compactness hypothesis $(\mathbf{H2})$ with system evolution, which is the central insight of Section~\ref{subsec_approximatectrl}. This idea is reminiscent of the Nash--Moser iteration, often invoked to handle derivative loss. Roughly speaking, the goal of iteration is to construct compact approximations to the derivative-losing operators and manage the error terms effectively. Typically, one either uses refining approximations at each step, or shows that the error term is a contraction so that the total error forms a summable geometric series. Clearly, the latter idea is reflected by \eqref{ACL-2}.

\subsubsection{Analogy to Nash--Moser iteration}

Before ending this overview, we also take the chance to discuss in more detail the control property, which is a crucial step to exponential mixing, as well as the new observation that links our approach to iteration.

The mixing property can be reduced (via several nontrivial implications) to the following:

\begin{lemma}\label{lem_approxcontrol}
    Under the assumptions of Proposition~{\rm\ref{prop_restricY}}, let $q_1\in (q_0,1)$ be arbitrarily given. Then for any $\sigma\in (0,1)$, there exist constants $\delta\in (0,1)$ and $C_\sigma>0$, a Borel map $\Phi\colon Y\times X\times E\to E$\footnote{In the sequel we often write $\Phi^{y,x}=\Phi(y,x,\cdot)\colon E\to E$ and view $x,y$ as parameters.}, and a family of Borel subsets $K^{y,\sigma}\subset K^y$ for each $y\in Y$, such that
    \begin{equation}\label{ApproxCtrl-1}
        \ell(K^{y,\sigma})\ge 1-\sigma\quad \text{for any }y\in Y,
    \end{equation}\begin{equation}\label{ApproxCtrl-2}
        \|\ell-(\Id+\Phi^{y,x})_*\ell\|_{TV}\le C_\sigma\|y-x\|^{1/2}\quad \text{for any }y\in Y,\ x\in X.
    \end{equation}
    Moreover, if $\zeta\not \in K^{y,\sigma}$, then $\Phi^{y,x}(\zeta)=0$; and if $\zeta\in K^{y,\sigma}$, then
    \begin{equation}\label{ApproxCtrl-3}
        \|S(y,\zeta)-S(x,\zeta+\Phi^{y,x}(\zeta))\|\le q_1 \|y-x\|\quad \text{whenever }\|y-x\|\le \delta.
    \end{equation}
\end{lemma}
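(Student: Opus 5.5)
The plan is the linearization scheme of \cite{KNS-20}, with the linearized map replaced by its compact approximation from $(\mathbf{H2})$.

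\textbf{Linearization.} For $y\in Y$, $x\in X$ and a control $\varphi\in E$, Taylor's formula and the bounded second derivatives of $S$ on bounded sets give
\[S(x,\zeta+\varphi)-S(y,\zeta)=D_yS(y,\zeta)(x-y)+D_\zeta S(y,\zeta)\varphi+O\big(\|x-y\|^2+\|\varphi\|_E^2\big).\]
Write $D_yS=T+(D_yS-T)$ with $T=T(y,\zeta)$. By \eqref{ACL-2} the non-compact part contributes at most $q_0\|x-y\|$. So it suffices to pick $\varphi$ with $\|\varphi\|_E\lesssim\|x-y\|$ and
\[\|D_\zeta S(y,\zeta)\varphi+T(y,\zeta)(x-y)\|\le\varepsilon\|x-y\|,\]
where $\varepsilon:=(q_1-q_0)/2$. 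Then taking $\delta$ small absorbs the quadratic remainder and yields \eqref{ApproxCtrl-3}.

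The point is that by \eqref{ACL-1} the set $\mathcal K:=\{T(y,\zeta)h:\|h\|\le1,\ y\in Y,\ \zeta\in K\}$ lies in $B_V(C_0)$. Since $V$ embeds compactly in $X$, $\mathcal K$ is precompact in $X$. Thus we only need to invert $A:=D_\zeta S(y,\zeta)$ approximately on a compact set, which is exactly what the dense image in $(\mathbf{H4})$ allows.

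\textbf{Approximate inverse (Lemma~\ref{lem_inverse}).} For $\zeta\in K^y$, take the regularized pseudo-inverse
\[R_\gamma:=A^*(AA^*+\gamma)^{-1}.\]
Precompose it with the finite-rank projection $P_N$ onto $\mathrm{span}\{\psi_1,\dots,\psi_N\}$, i.e.\ use $P_NR_\gamma$. Density of the image gives $AA^*>0$ (injective, self-adjoint), hence $AR_\gamma f\to f$ for every $f$, uniformly on the compact set $\overline{\mathcal K}$. Choosing $\gamma$ small and then $N$ large gives $\|AP_NR_\gamma f-f\|\le\varepsilon\|f\|_V$ on $V$-bounded sets. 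The parameters $\gamma,N$ and the norm $M:=\|P_NR_\gamma\|$ depend measurably on $(y,\zeta)$. By inner regularity of $\ell$ and continuity of $S$, $D_\zeta S$ and $T$ in $\zeta$ on compact sets, I would choose for each $y$ a closed set $K^{y,\sigma}\subset K^y$ with $\ell(K^{y,\sigma})\ge1-\sigma$ on which a single triple $(\gamma,N,M)$ works uniformly. Compactness of $Y$ should make the triple uniform in $y$ as well. Then set
\[\Phi^{y,x}(\zeta):=-\chi^{y}(\zeta)\,P_NR_\gamma(y,\zeta)\,T(y,\zeta)(x-y),\]
where $\chi^y$ is a cutoff equal to $1$ on $K^{y,\sigma}$ and $0$ off it. It satisfies $\|\Phi^{y,x}\|_E\le MC_0\|x-y\|$ and gives \eqref{ApproxCtrl-3}.

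\textbf{Total variation bound (\eqref{ApproxCtrl-2}), expected main obstacle.} The map $\Phi^{y,x}$ takes values in the $N$-dimensional space $P_NE$. Its size is $O(\|x-y\|)$, and its $\zeta$-Lipschitz constant is $O(\|x-y\|)$. The latter uses \eqref{ACL-3}, local Lipschitz continuity of $\zeta\mapsto D_\zeta S(y,\zeta)$, and Lipschitz continuity of $R_\gamma$ in $A$ for fixed $\gamma$. Fix the coordinates $\xi_k$ with $k>N$ and use the product structure $(\mathbf{H5})$ with Lipschitz densities $\rho_k$. A change of variables in $\mathbb R^N$, controlled by $\|\Phi\|$, $\mathrm{Lip}\,\Phi$ and $\mathrm{Lip}\,\rho_k$, then bounds $\|\ell-(\Id+\Phi^{y,x})_*\ell\|_{TV}$ by $C\|x-y\|$ away from the cutoff.

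The difficulty is the hard cutoff $\mathbf 1_{K^{y,\sigma}}$, which is not Lipschitz. I would replace it by a Lipschitz cutoff $\chi^y(\zeta)=\big(1-\mathrm{dist}_E(\zeta,K^{y,\sigma})/r\big)^+$, enlarging $K^{y,\sigma}$ slightly so that (i) and (ii) still hold on the $r$-neighbourhood. This costs a factor $r^{-1}$ in the Lipschitz constant, while the transition layer has measure $O(r)$. Optimizing $r\sim\|x-y\|^{1/2}$ should produce the exponent $1/2$ in \eqref{ApproxCtrl-2}. Finally, for $\|x-y\|>\delta$ the bound is trivial, since the total variation distance is at most $2\le2\delta^{-1/2}\|x-y\|^{1/2}$. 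So there we may take $\Phi^{y,x}=0$, and Borel measurability in $(y,x,\zeta)$ follows from the construction.
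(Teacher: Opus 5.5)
Your linearization step is the paper's own mechanism (Lemma~\ref{lem_contoltoy}). You compensate only the compact part $T(y,\zeta)(x-y)\in B_V(C_0\|x-y\|)$ and absorb $D_yS-T$ via \eqref{ACL-2}. However, the total variation bound \eqref{ApproxCtrl-2} does not follow from what you wrote. It rests on the claim that the transition layer $\{\zeta:0<\dist_E(\zeta,K^{y,\sigma})<r\}$ has $\ell$-measure $O(r)$. This must hold uniformly in $y$ and along the finite-dimensional fibres where the change of variables is done.

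For a closed set obtained from inner regularity, this claim is false in general. Such a set of measure $\ge 1-\sigma$ may be perforated by a dense family of small balls. The layer measure then tends to $0$ with no rate (e.g.\ like $1/\log(1/r)$), and the total variation of a hard-cutoff shift can decay just as slowly.

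This is the missing idea, and the paper supplies it by choosing the set with care. $K^{y,\sigma}$ is a union of sublevel sets $\{\mathfrak F_\varepsilon(y,\cdot)\le\theta_{\varepsilon,l}\}$ of the smooth functional from Lemma~\ref{lem_inverse}, which is Lipschitz in $\zeta$ by \eqref{mathfrakFLip}. On finitely many Borel pieces of $D'_\varepsilon$, the thresholds $\theta_{\varepsilon,l}\in(\theta_\varepsilon,2\theta_\varepsilon)$ are picked via the Sard-type \cite[Lemma 7.5]{NZZ-24} to be regular values of $\zeta_{M_\varepsilon}\mapsto\mathfrak F_\varepsilon(y,\zeta_{M_\varepsilon}+\zeta')$. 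Each fibre of $K^{y,\sigma}$ then has smooth boundary. \cite[Corollary 7.7]{NZZ-24} gives the layer estimate with exponent $1$, and \cite[Theorem 2.4]{KNS-20} turns it into $\|y-x\|^{1/2}$.

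The same construction also settles what your per-$y$ choice leaves open. It gives uniformity in $y$ of the parameters and of $\ell(K^{y,\sigma})\ge 1-\sigma$; this is \eqref{inverse-0}, i.e.\ \cite[Theorem 2.8]{KNS-20}, not merely ``compactness of $Y$''. It also gives joint Borel measurability of $\Phi$.

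Second, the Lipschitz cutoff must not be built into $\Phi$ itself. Where $0<\chi^y<1$ you compensate only $\chi^yT(x-y)$. The first-order residual therefore contains $(1-\chi^y)T(x-y)$, of size up to $C_0\|y-x\|$, so \eqref{ApproxCtrl-3} need not hold there although $\Phi\neq 0$. This breaks the dichotomy of the lemma: $\Phi=0$ off $K^{y,\sigma}$, and contraction on it.

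Lemma~\ref{lem_coupling} needs this dichotomy for \eqref{coupling-2}. On the exceptional event, the growth must be controlled by $q_2^{-L}$ with $L$ independent of $\nu$. It cannot be controlled by a factor involving $\|R_\varepsilon\|$, which depends on $\varepsilon\le\nu/2$. In addition, with $r\sim\|y-x\|^{1/2}$ the set would depend on $x$.

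The correct use, as in \cite{KNS-20}, keeps $\Phi=-\mathbf 1_{K^{y,\sigma}}R_\varepsilon T(x-y)$. The Lipschitz version $\tilde\Phi$ enters only as a comparison:
\[
\|\ell-(\Id+\Phi)_*\ell\|_{TV}\le\|\ell-(\Id+\tilde\Phi)_*\ell\|_{TV}+\|(\Id+\tilde\Phi)_*\ell-(\Id+\Phi)_*\ell\|_{TV}\lesssim\frac{\|y-x\|}{r}+\ell(\{\Phi\neq\tilde\Phi\}).
\]
The last term is exactly the layer measure that requires the regular-value construction above.
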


Among others, the key assertion is \eqref{ApproxCtrl-3}, which means two trajectories can be driven closer by a suitable control of the form $\xi=\zeta+\Phi^{y,x}(\zeta)$. This is referred to as ``local stabilization" in the terminology of control theory.
% As the noise is degenerate, exact controllability typically fails; that is, we cannot not generally find $\xi\in E$ so that
% \[\|S(y,\zeta)-S(x,\xi)\|=0.\]
Heuristically, in order to fulfill \eqref{ApproxCtrl-3}, we intend to cancel the first-order terms in the Taylor expansion,
\[S(y,\zeta)-S(x,\xi)=-D_y S(y,\zeta)(x-y)-D_\zeta S(y,\zeta)(\xi-\zeta)+(\text{higher order terms}).\]
This suggests a candidate for $\xi$ as:
\begin{equation}\label{Phichoice}
    \text{ `` }\xi=\zeta-(D_\zeta S(y,\zeta))^{-1}D_y S(y,\zeta)(x-y)\text{ "}.
\end{equation}
As there is no sign for $D_\zeta S(y,\zeta)$ to be invertible, the inverse map $(D_{\zeta} S(y,\zeta))^{-1}$ on the right-hand side is meaningless. In practice, previous works such as \cite{HM-06,KNS-20} replace the inverse with the Moore--Penrose pseudo-inverse, which is defined for any bounded operator $A\colon E\to X$ as
\begin{equation}\label{Rgammadef}
    R^\gamma=A^*(AA^*+\gamma )^{-1}\colon X\to E.
\end{equation}
Here $\gamma>0$ is a small parameter. If $A$ has dense image, then by standard functional analysis,
\[\lim_{\gamma\to 0+} \|AR^\gamma x-x\|=0\quad \text{for any }x\in X.\]
Therefore, $R^\gamma$ is indeed an approximated right inverse of $A$

Nevertheless, one cannot expect this convergence to hold uniformly, i.e.~with respect to operator norm. As a result, quantitative estimate is out of scope. One possible remedy comes from compactness. Indeed, as $V\subset X$ is compactly embedded, it is not hard to show that (see, e.g., \cite[Proposition 2.6]{KNS-20}), for any $\varepsilon>0$ there exists a small parameter $\gamma>0$ so that
\begin{equation}\label{pseudo-inverse-V}
    \|AR^\gamma v-v\|_X\le \varepsilon\|v\|_V\quad \text{for any }v\in V.
\end{equation}
In parabolic settings, the smoothing effect yields $D_y S(y,\zeta)(x-y)\in V$, making \eqref{pseudo-inverse-V} applicable.

\medskip

In contrast, for hyperbolic equations, the linearized evolution $D_u S(y,\zeta)(x-y)$ merely belongs to $X$ rather than $V$. This mirrors the derivatives-loss issue in functional equations, which we now briefly illustrate. When solving an abstract functional equation $F(u)=0$, Newton's iteration method considers the recurrence sequence
\[DF(u_n)(u_{n+1}-u_n)=-F(u_n).\]
In other words, as long as $DF(u_n)$ is invertible, we set (cf.~\eqref{Phichoice})
\begin{equation}\label{Newtoniteration}
    u_{n+1}=u_n-DF(u_n)^{-1}(F(u_n)).
\end{equation}
The problem occurs when $(DF)^{-1}\circ F$ does not pertain the regularity, and hence one cannot find a suitable function space to carry out this recursion. 

The Nash--Moser iteration remedies this by introducing smoothing approximations, so that solutions converge despite derivative loss. For example, one can replace $DF^{-1}\circ F$ by Fourier truncation to the first $M_n$ modes at the $n$-th step, and let $M_n$ increase to $\infty$ in an appropriate rate, to ensure that $u_n$ converges to the solution of $F(u)=0$. Another favorable situation is when $DF^{-1}$ can be decomposed into $K+C$, where $K$ is compact so that its approximate inverse gains extra regularity, and $C$ is a contraction map containing small errors.

\medskip

Returning to our problem, we need to improve the intuitive choice \eqref{Phichoice}, where the range of operator $D_y S(y,\zeta)$ merely belongs to $X$ rather than $V$. To this end, we use the compact operator $T(y,\zeta)\in \mathcal{L}(X,V)$ in place of $D_y S(y,\zeta)$. The error can be estimated by virtue of \eqref{ACL-2}. Specifically, we exploit a decomposition
\[D_y S(y,\zeta)=(\text{compact)}+\text{(contraction)}.\]
Then the revised version of \eqref{Phichoice}, reading (recall $R^\gamma$ refers to a pseudo-inverse of $D_\zeta S(y,\zeta)$)
\begin{equation}\label{Phichoice-revised}
    \text{ `` }\xi=\zeta-R^\gamma(y,\zeta)T(y,\zeta)(x-y)\text{ "}
\end{equation}
will serve to imply local stabilization \eqref{ApproxCtrl-3}.

\subsection{Control property}\label{subsec_approximatectrl}

To better highlight our contributions and new ideas, we extract the local stabilization \eqref{ApproxCtrl-3} from other assertions of Lemma~\ref{lem_approxcontrol} in the following lemma, which is of independent interest. The genuine proof of Lemma~\ref{lem_approxcontrol} is presented in Appendix~\ref{appendix_meastrans}, presuming some results from \cite{KNS-20, NZZ-24} that are less relevant to our current discussion.

We mention that this lemma involves purely deterministic dynamics, and can be applied to study the local stabilization of PDE models (cf.~\cite{Sar-12} for cubic NLS).

\begin{lemma}\label{lem_contoltoy}
    Under the assumptions of Proposition~{\rm\ref{prop_restricY}}, for any $q_1\in (q_0,1)$, $y\in Y$ and $\zeta\in K^y$ (see \eqref{K^ydef}), there exist constants $C>0$ and $\delta \in (0,1)$ (depending on $y$ and $\zeta$), such that if $x\in X$ satisfies $\|y-x\|\le \delta$, then one can find $\xi\in E$ so that
    \begin{equation}\label{ApproxCtrl-toy-1}
        \|\zeta-\xi\|_E\le C\|y-x\|,
    \end{equation}
    \begin{equation}\label{ApproxCtrl-toy-2}
        \|S(y,\zeta)-S(x,\xi)\|\le q_1 \|y-x\|.
    \end{equation}
\end{lemma}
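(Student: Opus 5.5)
Set $A:=D_\zeta S(y,\zeta)\colon E\to X$, $L:=D_yS(y,\zeta)$ and $T:=T(y,\zeta)$, and let $R^\gamma=A^*(AA^*+\gamma)^{-1}$ be the regularized pseudo-inverse \eqref{Rgammadef}. Following the heuristic \eqref{Phichoice-revised}, I would take the control
\[
\xi:=\zeta-R^\gamma T(x-y),
\]
with $\gamma>0$ fixed depending only on $(y,\zeta)$. Then \eqref{ApproxCtrl-toy-1} holds with $C=\|R^\gamma\|_{\mathcal{L}(X,E)}C_0$, using \eqref{ACL-1} and $\|R^\gamma\|\le\gamma^{-1/2}$. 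The point is that $R^\gamma$ only acts on the range of $T$, which lies in $V$, so the compactness of $V\hookrightarrow X$ is available.

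\textbf{Step 1 (uniform approximate inverse on $V$).} Since $\zeta\in K^y$, the operator $A$ has dense image. Given $\varepsilon>0$, I claim there is $\gamma>0$ with
\[
\|AR^\gamma v-v\|\le\varepsilon\|v\|_V\quad\text{for all }v\in V,
\]
as in \eqref{pseudo-inverse-V}. The argument goes as follows. We have $I-AR^\gamma=\gamma(AA^*+\gamma)^{-1}$, which has norm at most $1$ on $X$. Moreover, dense image of $A$ means $\ker A^*=0$, so by spectral calculus $\gamma(AA^*+\gamma)^{-1}\to0$ strongly as $\gamma\to0$. The unit ball of $V$ is precompact in $X$. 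A uniformly bounded family converging strongly converges uniformly on precompact sets, which gives the claim. This is the step that uses $(\mathbf{H4})$ and is where $C,\delta$ acquire their dependence on $(y,\zeta)$.

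\textbf{Step 2 (Taylor expansion).} Write $h:=x-y$. Since $S$ is $C^2$ with bounded second derivatives on bounded sets, and $\|h\|\le\delta\le1$ and $\|\xi-\zeta\|_E\le C\|h\|$ keep us in a bounded set,
\[
S(x,\xi)-S(y,\zeta)=Lh+A(\xi-\zeta)+O(\|h\|^2).
\]
Substituting $\xi-\zeta=-R^\gamma Th$, the linear part becomes
\[
Lh-AR^\gamma Th=(L-T)h+(I-AR^\gamma)Th .
\]
By \eqref{ACL-2}, the first term has norm at most $q_0\|h\|$. By Step 1 and \eqref{ACL-1}, the second has norm at most $\varepsilon\|Th\|_V\le\varepsilon C_0\|h\|$.

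\textbf{Step 3 (choice of parameters).} Collecting the estimates,
\[
\|S(y,\zeta)-S(x,\xi)\|\le\bigl(q_0+\varepsilon C_0+M\delta\bigr)\|h\|,
\]
where $M$ bounds the second derivatives (times $(1+C)^2$) on the relevant bounded set. I would first choose $\varepsilon=(q_1-q_0)/(2C_0)$, which fixes $\gamma$ and hence $C$. Then I would choose $\delta$ so that $M\delta\le (q_1-q_0)/2$. This yields \eqref{ApproxCtrl-toy-2}.

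The main obstacle is Step 1: one must justify uniform convergence on the compact set using only density of the range, and ensure the constant $C=\|R^\gamma\|C_0$ stays finite once $\gamma$ is fixed. The remaining steps are routine bookkeeping. Note that the Lipschitz condition \eqref{ACL-3} is not needed here; it should only matter for the measurable-in-$\zeta$ version in Lemma~\ref{lem_approxcontrol}.
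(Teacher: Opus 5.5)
Your proposal is correct and follows essentially the same route as the paper. You use the same control $\xi=\zeta-R^\gamma T(x-y)$, split the linear part into $(D_yS(y,\zeta)-T)h$ and $(I-AR^\gamma)Th$, bound them by \eqref{ACL-2} and by \eqref{pseudo-inverse-V} with \eqref{ACL-1}, and choose $\varepsilon$, then $\gamma$, then $\delta$ in the same order. The only differences are minor: you prove \eqref{pseudo-inverse-V} directly (via $I-AR^\gamma=\gamma(AA^*+\gamma)^{-1}\to 0$ strongly together with precompactness of $B_V(1)$ in $X$) where the paper cites \cite[Proposition 2.6]{KNS-20}, and you use the sharper bound $\|R^\gamma\|\le\gamma^{-1/2}$ in place of $\gamma^{-1}\|A\|$.
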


\begin{proof}
    Fix $y\in Y$ and $\zeta\in K^y$ from now on. Let us denote $A:=D_\zeta S(y,\zeta)$ and $T=T(y,\zeta)$ (see $(\mathbf{H2})$). Following the guideline from last subsection, we intend to define $\xi$ as
    \[\xi=\zeta-R^\gamma T(x-y),\]
    where $R^\gamma$ is the pseudo-inverse of $A$ given by \eqref{Rgammadef}, and $\gamma\in (0,1)$ is determined in the following manner: Choose an auxiliary parameter $\varepsilon\in (0,1)$ so that (recall $C_0$ comes from \eqref{ACL-1})
    \begin{equation}\label{q0+Cepsilon<q1}
        q_0+C_0\varepsilon<q_1.
    \end{equation}
    Then the hypothesis $(\mathbf{H4})$ (i.e., $A$ has dense image) and \cite[Proposition 2.6]{KNS-20} yields a sufficiently small $\gamma$ to ensure \eqref{pseudo-inverse-V} with this specific $\varepsilon$. We emphasis that $\gamma$ is determined by $y$ and $\zeta$.

    First we address \eqref{ApproxCtrl-toy-1}. It is easy to see from \eqref{Rgammadef} that $\|R_\gamma\|_{\mathcal{L}(X,E)}\le \gamma^{-1}\|A\|_{\mathcal{L}(E,X)}$. Thus
    \[\|\zeta-\xi\|_E\le \gamma^{-1} \|A\|_{\mathcal{L}(E,X)}\|T\|_{\mathcal{L}(X)}\|y-x\|\le C_{y,\zeta}\|y-x\|.\]
    Here and later, the constant $C_{y,\zeta}$ depends on $y,\zeta$ but not $x$.
    
    Next we turn to \eqref{ApproxCtrl-toy-2}, provided $\|y-x\|\le \delta$, with $0<\delta\ll 1$ to be chosen later. By virtue of Taylor's expansion and our assumption on the local boundedness of second-order derivatives of $S$, we find (with the constant $C$ in the first line independent of $x,y,\eta$)
    \begin{align*}
        \|S(y,\zeta)-S(x,\xi)\|
        \le &\|-D_y S(y,\zeta)(x-y)-D_\zeta S(y,\zeta)(\xi-\zeta)\|+C (\|y-x\|^2+\|\xi-\zeta\|_E^2)\\
        \le &\|-D_y S(y,\zeta) (x-y)+AR^\gamma T(x-y)\|+C_{y,\zeta}\|y-x\|^2\\
        \le & \|T(x-y)-D_y S(y,\zeta)(x-y)\|+\|T(x-y)-AR^\gamma T(x-y)\|+C_{y,\zeta}\|y-x\|^2
    \end{align*}
    The first term on the right-hand side can be estimated by \eqref{ACL-2}:
    \[\|T(x-y)-D_yS(y,\eta)(x-y)\|\le q_0 \|y-x\|;\]
    while the second term can be estimated via \eqref{ACL-1} and \eqref{pseudo-inverse-V}:
    \[\|T(x-y)-AR^\gamma T(x-y)\|_X\le \varepsilon \|T(x-y)\|_V\le C_0 \varepsilon \|y-x\|_X.\]
    Therefore, taking $\|y-x\|\le \delta$ into account, we find
    \begin{equation}\label{stabilization-1}
        \|S(y,\zeta)-S(x,\xi)\|\le (q_0+C_0 \varepsilon +C_{y,\zeta} \delta) \|y-x\|.
    \end{equation}
    Thanks to \eqref{q0+Cepsilon<q1}, we obtain \eqref{ApproxCtrl-toy-2} once $\delta$ is sufficiently small (depending on $y$ and $\zeta$).
\end{proof}

We remark that the proof of Lemma~\ref{lem_approxcontrol} follows from similar lines, while the main difference is that the operator $A(y,\zeta):=D_\zeta S(y,\zeta)$ is now parameter-dependent. Thus the pseudo-inverse $R^\gamma(y,\zeta)$ is only a nice approximation of right inverse ``at most points". Indeed, as realized by \cite{KNS-20,NZZ-24}, given any $\sigma\in (0,1)$, for $\zeta$ belonging to the appropriately defined Borel set $K^{y,\sigma}\subset K^y$, whose $\ell$-measure is at least $1-\sigma$, the parameter $\gamma$ for the approximate inverse $R^{\gamma}(y,\zeta)$, as well as various constants (especially $C_{y,\zeta}$ in \eqref{stabilization-1}), can be chosen to be uniform in $y,\zeta$ and depending only on $\sigma$. The interested reader is referred to Appendix~\ref{appendix_meastrans} for technical details.

\subsection{Mixing on $Y$ via control and coupling}\label{subsec_reduction}

We follow the route in \cite{KNS-20} for the implication from control to mixing on $Y$. As outlined earlier, we first deduce coupling from control, and then establish mixing from coupling. While the ideas are not new, the technical details differ and require careful modifications. To keep the focus on the novel aspects of this work, we will only sketch the coupling condition and refer to Appendices \ref{appendix_control->coupling}–\ref{appendix_coupling->mixing} for complete proofs.

\medskip

\noindent\textbf{Coupling condition.} Let $q_2\in (q_0,1)$ be arbitrarily given. Since $S$ is locally Lipschitz, there exists an integer $L=L(q_2)\in \mathbb{N}$ such that
\begin{equation}\label{LipS}
    \|S(y,\zeta)-S(y',\zeta)\|\le q_2^{-L}\|y-y'\|\quad \text{for any }y,y'\in Y,\ \zeta\in K.
\end{equation}
Let $R>0$ be a number so that $Y\subset B_X(R)$. For any $d\in (0,1)$, let us decompose $\mathbf{Y}=Y\times Y$ in the following manner: pick $N=N(d)$ as the least positive integer so that $q_2^N R\le d/2$, and introduce for $n\ge 0$ and $-N\le k\le -1$ the Borel subsets (here $a\vee b:=\max\{a,b\}$)
\begin{align*}
    \mathbf{Y}_\infty&=\{(y,y')\in \mathbf{Y}:y=y'\},\\
    \mathbf{Y}_n&=\{(y,y')\in \mathbf{Y}:q_2^{n+1}d<\|y-y'\|\le q_2^n d\},\\
    \mathbf{Y}_k&=\{(y,y')\in \mathbf{Y}:\|y-y'\|>d,\  Rq_2^{N+k+1}<\|y-\tilde{y}\|\vee \|y'-\tilde{y}\|\le Rq_2^{N+k}\}.
\end{align*}
Here in the definition of $\mathbf{Y}_k$, the point $\tilde{y}$ is the ``stationary state" appearing in hypothesis $(\mathbf{H3})$. One readily checks that $\mathbf{Y}$ is the disjoint union of these subsets $\mathbf{Y}_l\, (-N\le l\le \infty)$. We also employ the self-explanatory notations such as $\mathbf{Y}_{\ge l}=\bigcup_{l\le j\le \infty} \mathbf{Y}_j$ and $\mathbf{Y}_{<l}=\bigcup_{-N\le j<l} \mathbf{Y}_j$.

\medskip

The coupling result connecting control property and exponential mixing is stated as follows.

\begin{lemma}\label{lem_coupling}
    Under the assumptions of Proposition~{\rm\ref{prop_restricY}}, let $q_2\in (q_0,1)$ be arbitrarily given. Then for any $\nu\in (0,1)$, there exist constants $C_\nu>0$ and $d_\nu\in (0,1)$, such that for any $d\in (0,d_\nu]$, there are measurable maps $V,V'\colon Y\times Y\times \Omega\to Y$ satisfying the following assertions.

    \begin{itemize}
        \item[(a)] For $(y,y')\in \mathbf{Y}$, the pair $(V(y,y'),V'(y,y'))$ is a coupling between $P(y,\cdot)$ and $P(y',\cdot)$. Moreover, if $(y,y')\in \mathbf{Y}_\infty\cup \mathbf{Y}_{<0}$ (i.e.~$y=y'$ or $\|y-y'\|>d$), then
        \[V(y,y')=S(y,\tilde{\eta})\quad \text{and}\quad V'(y,y')=S(y',\tilde{\eta}),\]
        where $\tilde{\eta}$ is an i.i.d.~copy of the random variables $\eta_n$ in the random dynamical system.

        \item[(b)] If $(y,y')\in \mathbf{Y}_n$ for some $0\le n<\infty$, then
        \begin{equation}\label{coupling-1}
            \mathbb{P}((V(y,y'),V'(y,y'))\in  \mathbf{Y}_{\ge n+1})\ge 1-\nu.
        \end{equation}
        Moreover, if $(y,y')\in \mathbf{Y}_n$ with $L\le n<\infty$, then
        \begin{equation}\label{coupling-2}
            \mathbb{P}((V(y,y'),V'(y,y'))\in  \mathbf{Y}_{<n-L})\le C_\nu\|y-y'\|^{1/2}.
        \end{equation}
    \end{itemize}
\end{lemma}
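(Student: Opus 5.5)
The coupling will be built from Lemma~\ref{lem_approxcontrol}, applied with $q_1:=q_2^{1/2}$ or any $q_1\in(q_0,q_2)$ chosen so that $q_1<q_2$, and with $\sigma:=\nu/2$. This yields $\delta$, $C_\sigma$, the sets $K^{y,\sigma}$ and the map $\Phi$. Set $d_\nu\le\delta$, with further smallness fixed below.

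\textbf{Trivial cases.} For $(y,y')\in\mathbf{Y}_\infty\cup\mathbf{Y}_{<0}$ we take the synchronous coupling $V=S(y,\tilde\eta)$, $V'=S(y',\tilde\eta)$. This is clearly a coupling of $P(y,\cdot)$ and $P(y',\cdot)$, and measurability is inherited from $S$, so assertion (a) holds.

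\textbf{Main case.} For $(y,y')\in\mathbf{Y}_n$ with $0\le n<\infty$ we have $\|y-y'\|\le d\le\delta$. Write $\Psi:=\Id+\Phi^{y,y'}$. By \eqref{ApproxCtrl-2}, the laws $\ell$ and $\Psi_*\ell$ are within $C_\sigma\|y-y'\|^{1/2}$ in total variation. We use the maximal coupling lemma for probability measures on a Polish space, with a measurable dependence on parameters (as in \cite{KNS-20}, e.g. via the measurable selection version in Kuksin--Shirikyan). It produces random variables $(\eta,\eta')$ with $\mathscr{D}(\eta)=\ell$ and $\mathscr{D}(\eta')=\ell$, together with the event
\[\mathcal{G}=\{\eta'=\Psi(\eta)\},\qquad \mathbb{P}(\mathcal{G}^c)\le C_\sigma\|y-y'\|^{1/2}.\]
To get this we couple $\eta'$ with $\Psi(\eta)$ maximally, using the gluing lemma. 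We then set $V=S(y,\eta)$ and $V'=S(y',\eta')$; their marginals are correct, which gives (a).

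On $\mathcal{G}\cap\{\eta\in K^{y,\sigma}\}$, estimate \eqref{ApproxCtrl-3} gives
\[\|V-V'\|\le q_1\|y-y'\|\le q_1q_2^nd.\]
We need this to be at most $q_2^{n+1}d$ (and to be $0$, i.e.\ land in $\mathbf{Y}_\infty$, only if it vanishes), so we require $q_1\le q_2$. The pair then lies in $\mathbf{Y}_{\ge n+1}$. Its probability is at least
\[1-\sigma-C_\sigma d^{1/2}\ge 1-\nu\]
once $d_\nu$ is small, which proves \eqref{coupling-1}.

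\textbf{Proof of \eqref{coupling-2}.} Off $\mathcal{G}$ we only use the crude bound from \eqref{LipS}; the relevant point is the case $\eta=\eta'$ there. So a cleaner construction is preferable. If $\eta\notin K^{y,\sigma}$, then $\Psi(\eta)=\eta$, and the maximal coupling can be arranged to keep $\eta'=\eta$ on that part. More generally, we aim to keep $\eta'\in\{\eta,\Psi(\eta)\}$ except on an event of probability at most $C_\nu\|y-y'\|^{1/2}$. Whenever $\eta'=\eta$ or $\eta'=\Psi(\eta)$ with $\eta\in K^{y,\sigma}$, the resulting distance is at most $q_2^{-L}\|y-y'\|\le q_2^{n-L}d$, which keeps the pair in $\mathbf{Y}_{\ge n-L}$. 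The only bad event is therefore $\mathcal{G}^c$, with the stated bound. If $\eta'$ is unrestricted on $\mathcal{G}^c$, the distance could reach $\mathbf{Y}_{<0}$, but this again costs only $\mathbb{P}(\mathcal{G}^c)\le C_\sigma\|y-y'\|^{1/2}$. Either way we obtain \eqref{coupling-2}.

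\textbf{Main obstacle.} The delicate step is measurability in $(y,y')$ of the maximal coupling, together with the requirement that $V,V'$ take values in $Y$. The latter holds because $Y$ is invariant and $\eta,\eta'\in K$ almost surely. For measurability we would invoke the measurable version of the coupling lemma from \cite{KNS-20}, using that $\Phi$ is Borel.
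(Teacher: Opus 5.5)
Your proposal is correct and follows the paper's proof: $\sigma=\nu/2$ in Lemma~\ref{lem_approxcontrol}, the synchronous coupling on $\mathbf{Y}_\infty\cup\mathbf{Y}_{<0}$, and a measurable maximal coupling of $(\Id+\Phi^{y,y'})_*\ell$ and $\ell$ glued to $(\eta,\eta+\Phi^{y,y'}(\eta))$, which confines the bad event for \eqref{coupling-2} to $\{\eta'\neq(\Id+\Phi^{y,y'})(\eta)\}$ via \eqref{LipS}. Two small corrections: no rearrangement of the maximal coupling is needed, since $\Phi^{y,y'}$ vanishes off $K^{y,\sigma}$ and so $\eta'=\eta$ holds automatically there; and $q_1=q_2^{1/2}$ is not admissible because it exceeds $q_2$, whereas the paper simply takes $q_1=q_2$.
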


\medskip

Now we can conclude the proof of Proposition~\ref{prop_restricY}, namely the exponential mixing on $Y$.

\begin{proof}[\rm \bf Proof of Proposition~\ref{prop_restricY}]

The control property Lemma~\ref{lem_approxcontrol} is proved in Appendix~\ref{appendix_meastrans}, incorporating some observations from the last subsection. According to an idea similar to \cite[Section 2.3]{KNS-20}, we can derive from it the coupling condition Lemma~\ref{lem_coupling}; see Appendix~\ref{appendix_control->coupling} for details.

Finally, the derivation of Proposition~\ref{prop_restricY} from Lemma~\ref{lem_coupling} is similar to \cite[Section 2.2]{KNS-20}. Since \cite{KNS-20} only treated the case $m_0=1$ in $(\mathbf{H3})$, for the sake of completeness, we provide the rigorous proof in Appendix~\ref{appendix_coupling->mixing}. Now the proof of Proposition~\ref{prop_restricY} is complete.
\end{proof}

As mentioned earlier, the exponential mixing on $X$ follows from Proposition~\ref{prop_restricY} by exploiting the EAC hypothesis $(\mathbf{H1})$; see \cite[Proposition 2.4]{LWXZZ-24} for details. Therefore, we have now accomplished the proof of Theorem~\ref{thm_criterion}, namely the abstract criterion has been established.

\section{Global dynamics of Schr\"odinger equations}\label{sec_globaldynamic}

Before applying our general criterion to prove the Main Theorem (see Section~\ref{sec_applyNLS}), we also need to investigate the deterministic NLS equation. Our focus in this section is on the global dynamics, especially the asymptotic compactness, which guarantee hypotheses $(\mathbf{H1})$--$(\mathbf{H3})$. The geometric control property, corresponding to $(\mathbf{H4})$, will be addressed separately in Section~\ref{sec_geometriccontrol}.

We first review the deterministic version of NLS equation, which reads
\begin{equation}\label{deterministicNLS}
\left\{\begin{array}{ll}
iu_t+u_{xx}+ia(x)u=|u|^{p-1}u+f(t,x),\\
u(0,\cdot)=u_0\in H^1(\mathbb{T}),
\end{array}\right.
\end{equation}
where $p\ge 3$ is an odd integer, and $f\colon [0,1]\rightarrow H^1(\mathbb{T})$ (or $f\colon \mathbb{R}^+\rightarrow H^1(\mathbb{T})$) is a deterministic force. The well-posedness and global stability are revisit in Section~\ref{sec_NLSprerequist}. The nonlinear smoothing effect, introduced by Bourgain \cite{Bourgain-98}, is discussed in Section~\ref{subsubsec_nonlinearsmoothing}

Then we analyze the linearized system around the reference trajectory $u\colon [0,1]\to \mathbb{C}$, which plays a central role in verifying hypothesis $(\mathbf{H2})$:
\begin{equation}\label{D_uS}
    \left\{\begin{array}{ll}
iv_t+v_{xx}+ia(x) v=\frac{p+1}{2} |u|^{p-1} v+\frac{p-1}{2} |u|^{p-3} u^2 \bar{v},\\
v(0,\cdot)=v_0.
\end{array}\right.
\end{equation}
The key result of this section concerning asymptotic compactness is stated as follows:

\begin{proposition}\label{prop_H2}
    Let $b\in (1/2,1)$ and $R>0$ be arbitrarily given. There exists a constant $C>0$, such that if the reference trajectory $u\in X_1^{5/4,b}$ (Bourgain spaces, see Definition~{\rm\ref{def_Bourgain}}) with $\|u\|_{X_1^{5/4,b}}\le R$, then for any $v_0\in H^1(\mathbb{T})$, the solution $v\in C(0,1;H^1(\mathbb{T}))$ of the linearized equation \eqref{D_uS} satisfies
    \begin{equation}\label{H2-1}
        \|v(1)-e^{-i\theta_u(1)} S_a(1)v_0\|_{H^{5/4}}\le C\|v_0\|_{H^1}.
    \end{equation}
    Here $S_a(t)$ is the $C_0$-operator group generated by $i\partial_x^2-a(x)$, and $\theta_u(t)\in \mathbb{R}$ is defined by
    \begin{equation}\label{theta(u,t)def}
        \theta_u(t)=\frac{p+1}{4\pi}\int_0^t \|u(s)\|_{L^{p-1}(\mathbb{T})}^{p-1}\, ds.
    \end{equation}
\end{proposition}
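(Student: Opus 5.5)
We will prove \eqref{H2-1} in two stages: first remove the resonant part of the potential through the phase $\theta_u$, and then use Bourgain-space multilinear estimates to show that the remaining Duhamel term gains $1/4$ derivative.

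\textbf{Gauge transform.} Set $w(t)=e^{i\theta_u(t)}v(t)$. Since $\theta_u'(t)=\frac{p+1}{4\pi}\|u(t)\|_{L^{p-1}}^{p-1}$, the function $w$ solves
\[iw_t+w_{xx}+ia(x)w=\tfrac{p+1}{2}\big(|u|^{p-1}-\tfrac{1}{2\pi}\|u\|_{L^{p-1}}^{p-1}\big)w+\tfrac{p-1}{2}|u|^{p-3}u^2e^{2i\theta_u}\bar w=:N_1(u,w)+N_2(u,w).\]
The subtracted constant is exactly the zero Fourier mode of $\frac{p+1}{2}|u|^{p-1}$. It therefore removes the resonant interaction $\hat w(k)\cdot\widehat{|u|^{p-1}}(0)$, which on $\mathbb{T}$ cannot gain derivatives. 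Duhamel's formula with respect to $S_a$ gives
\[w(1)-S_a(1)v_0=-i\int_0^1S_a(1-s)\big(N_1+N_2\big)(s)\,ds .\]
Because $|e^{-i\theta_u(1)}|=1$, it suffices to bound the right-hand side in $H^{5/4}$ by $C\|v_0\|_{H^1}$.

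\textbf{A priori bound for $w$.} Linear local theory in $X_1^{1,b}$ (Definition~\ref{def_Bourgain}) gives $\|w\|_{X_1^{1,b}}\le C(R)\|v_0\|_{H^1}$. The steps are as follows.
\begin{itemize}
\item The multilinear estimate $\|u^{j}\bar u^{l}w\|_{X^{1,b-1}}\lesssim\|u\|_{X^{1,b}}^{p-1}\|w\|_{X^{1,b}}$ holds for $b$ close to $1/2$; this is the standard $L^4_{t,x}$/$L^6_{t,x}$ Strichartz-based estimate on $\mathbb{T}$.
\item We contract on short intervals, whose length depends only on $R$.
\item We iterate over finitely many intervals, since the equation is linear in $w$.
\end{itemize}
The damping term $ia(x)w$ is smooth and of order zero. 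We treat it either perturbatively as a bounded operator on $X^{s,b}$ (multiplication by a smooth function) or absorb it into $S_a$. Its smooth symbol causes no derivative loss.

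\textbf{Smoothing estimate.} The core is the estimate
\[\Big\|\tilde N_1(u,w)\Big\|_{X^{5/4,b-1}}+\|N_2(u,w)\|_{X^{5/4,b-1}}\lesssim \|u\|_{X^{5/4,b}}^{p-1}\|w\|_{X^{1,b}} .\]
Combined with the $X^{s,b}$ Duhamel bound $\|\int_0^tS(t-s)F\|_{X^{s,b}}\lesssim\|F\|_{X^{s,b-1}}$ and the embedding $X^{s,b}\subset C_tH^s$ for $b>1/2$, this yields \eqref{H2-1}. Here $\tilde N_1$ denotes $N_1$ with the zero mode removed. The damping is handled by writing $S_a(1-s)=S(1-s)+{}$(a smoothing correction), or by a further Duhamel step in which the term $a(x)\cdot(\text{Duhamel part})$ is commutator-like and stays at the $H^{5/4}$ level.

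\textbf{Proof of the multilinear estimate.} Expand in Fourier variables with output frequency $k$ and input frequencies $k_1,\dots,k_{p-1}$ for the $u$-factors and $k_0$ for the $w$-factor. The weight to control is $\langle k\rangle^{5/4}/\langle k_0\rangle$, and it is harmful only when $|k_0|\gtrsim|k|$ is the largest frequency. We split into cases.
\begin{itemize}
\item \emph{Some $u$-frequency is comparable to $|k|$.} The $5/4$ derivatives fall on that $u$ factor, so there is nothing to gain.
\item \emph{All $u$-frequencies are small compared to $|k_0|$.} The resonance function is
\[\Omega=k^2\pm k_0^2+\sum\pm k_j^2 .\]
For the $\bar w$ term, $|\Omega|\gtrsim|k_0|^2$, which gives ample gain. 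For the $w$ term, $\Omega\approx 2k_0(k-k_0)+O(\cdot)$, which is of size $\gtrsim|k_0|$ unless $k=k_0$. The case $k=k_0$ with $\sum k_j=0$ is exactly the removed zero mode, so it is absent from $\tilde N_1$.
\end{itemize}
A factor $|\Omega|^{1-b}$ then supplies up to roughly $|k_0|^{1/2-}$, more than the $1/4$ needed. The remaining interactions, where some $u$-factor has large frequency, use the extra $1/4$ derivative of $u\in X^{5/4,b}$.

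\textbf{Main obstacle.} The expected main obstacle is the near-resonant regime $k\approx k_0$ with large $|k_0|$ in the $w$ term. There we cannot rely on modulation and must instead use $\widehat{|u|^{p-1}}(k-k_0)$ with $k-k_0\ne0$, trading derivatives from the $u$ factors through the $5/4$ regularity. A secondary technical point is the interaction of the damping $a(x)$ with the Bourgain-space machinery. We handle it by working with $S(t)$ and placing $a(x)w$ in the nonlinearity, where it is harmless because $\|a\,g\|_{X^{s,b-1}}\lesssim\|g\|_{X^{s,b-1}}$ up to an acceptable loss in $b$.
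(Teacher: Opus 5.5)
Your overall architecture matches the paper's: gauge by $\theta_u$, Duhamel for the damped group, and an $X_1^{5/4,b-1}$ bound on the gauged potential terms. The estimate you call the core is also true. Your $N_1+N_2$ is exactly the right-hand side $\frac{p^2-1}{4\pi}U\,\re\,(|U|^{p-3}U,w)_{L^2(\mathbb{T})}+\NR(U,w)$, with $U=e^{i\theta_u}u$, of the paper's gauged equation.

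\textbf{The gap: your proof of the core estimate.} In the regime where all $u$-frequencies are $\ll|k_0|$, you assert $|\Omega|\gtrsim|k_0|$ whenever $k\neq k_0$. That holds for $p=3$, where $\Omega=2(k_2-k_1)(k_2-k_0)$, but it fails for $p\ge 5$. For $|u|^4w$, take $k_0=N^2$ and $(k_1,k_2,k_3,k_4)=(N+1,1,1-N,0)$ with $N\ge 2$. Then $k=N^2+1\neq k_0$, the configuration is not single-resonant, and $k^2-k_1^2+k_2^2-k_3^2+k_4^2-k_0^2=0$, while every $u$-frequency is $\sim|k_0|^{1/2}$.
\begin{itemize}
\item Modulation gives nothing in this configuration.
\item ``The extra $1/4$ derivative'' of one $u$-factor at frequency $|k_0|^{1/2}$ yields only $|k_0|^{1/8}$, short of the required $\langle k\rangle^{5/4}/\langle k_0\rangle\sim|k_0|^{1/4}$.
\item Using $\widehat{|u|^{p-1}}(k-k_0)$ with $k\ne k_0$ gives no decay when $|k-k_0|$ is small.
\end{itemize}
This is precisely the regime you flag as the main obstacle, and the proposal leaves it unresolved.

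It can be closed as follows. If $m=k-k_0\neq 0$ and $|\Omega|\ll|k_0||m|$, a single dominant $u$-frequency is impossible, since it forces $|\Omega|\gtrsim|k_0||m|$. Hence the two largest $u$-frequencies are comparable and $\gtrsim(|k_0||m|)^{1/2}$. Putting both in $L^\infty$ via Bernstein and $H^{5/4}$ gains $(|k_0||m|)^{-3/4}$, which more than compensates $|k_0|^{1/4}$.

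The paper does not redo this analysis. It isolates the single-resonant rank-one term $U\,\re\,(|U|^{p-3}U,V)_{L^2}$ and bounds it through $L^\infty(0,1;H^{5/4})\hookrightarrow X_1^{5/4,b-1}$. This is the only place where $u\in X_1^{5/4,b}$ is really used. It then controls $\NR(U,V)$ by the cited Lemma~\ref{lem_nonlinearsmoothing}, which covers exactly these near-resonant (but not single-resonant) configurations with all inputs merely in $X_1^{1,b}$.

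\textbf{The damping.} Your treatment here also needs correcting. $S_a(t)-S(t)$ is not a smoothing correction. Already at first order in $a$ it contains the Fourier-diagonal term $-t\,\bar a\,S(t)$, with $\bar a=\frac{1}{2\pi}\int_{\mathbb{T}}a>0$, and this term gains no derivative. For the same reason, moving $a(x)w$ into the forcing and using $S(t)$ is not harmless: $\|a w\|_{X^{5/4,b-1}}$ is not controlled by $\|w\|_{X^{1,b}}$. Already for $u\equiv 0$, $w(1)-S(1)v_0=(S_a(1)-S(1))v_0\notin H^{5/4}$ for generic $v_0\in H^1$.

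What you need is the inhomogeneous bound $\|\int_0^t S_a(t-\tau)F(\tau)\,d\tau\|_{X_1^{s,b}}\le C\|F\|_{X_1^{s,b-1}}$ for the damped group, which the paper imports as Lemma~\ref{lemBourgain_S(t)estimate}. Your ``further Duhamel step'' does give it, since $\int_0^1[S_a(1-s)-S(1-s)]F(s)\,ds=-\int_0^1 S_a(1-r)\,a\,G(r)\,dr$ with $G(r)=\int_0^r S(r-s)F(s)\,ds\in C([0,1];H^{5/4})$. That is the version to keep.
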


The proof constitutes Section~\ref{subsubsec_H2}, relying on multilinear estimates in Bourgain spaces. The merit is that, although the solution $v(t)$ and linear evolution $S_a(t)v_0$ retain the same regularity $H^1$, their difference, up to a ``phase shift" $e^{-i\theta_u(t)}$, gains extra regularity $H^{5/4}$.

\subsection{Preliminaries: well-posedness and global stability}\label{sec_NLSprerequist}

We quickly review the deterministic NLS equation. The results in this subsection are taken from \cite{CXZZ-25}.

\subsubsection{Well-posedness and smoothness of the solution map}

By a solution of NLS equation \eqref{deterministicNLS}, we always mean a mild solution. More precisely, given $u_0\in H^s(\mathbb{T})$, the solution is formulated as $u\in C(0,1;H^s(\mathbb{T}))$ satisfying the Duhamel formula
\[u(t)=S_a(t)u_0-i\int_0^t S_a(t-s)(|u(s)|^{p-1} u(s)+f(s))\, ds.\]
As $H^s(\mathbb{T})$ is a Banach algebra for $s>1/2$, the standard fixed-point argument yields local well-posedness. Using $H^1$-energy balance and Sobolev embedding $H^1(\mathbb{T})\hookrightarrow L^\infty(\mathbb{T})$, it is easy to derive global well-posedness in $H^s(\mathbb{T})$ for any $s\ge 1$.

\medskip

For future convenience, we need a finer result that, the solution $u$ actually belongs to the Bourgain space $X_1^{s,b}$ for any $b\in (1/2,1)$, where the subscript $1$ refers to the time interval $[0,1]$.

\begin{definition}[Bourgain spaces]\label{def_Bourgain}
    Let $s,b\in \mathbb{R}$ be arbitrarily given. The Bourgain space $X^{s,b}$ consists of functions $u:\mathbb{R}\times\mathbb{T}\rightarrow\mathbb C$ for which the norm defined by
    \[\|u\|_{X^{s,b}}:=\left(\sum_{k\in\mathbb{Z}}\int_\mathbb{R}\langle k\rangle^{2s}\langle \tau+k^2\rangle^{2b}|\widehat u(\tau,k)|^2d\tau\right)^{1/2}<\infty.\]
    Here $\hat{u}(\tau,k)$ refers to the space-time Fourier transform of $u$. For $T>0$, the restricted space $X^{s,b}_T$ consists of $u\colon [0,T]\times \mathbb{T}\to \mathbb{C}$ with norm
    \[\|u\|_{X^{s,b}_T}=\inf\{\|\tilde u\|_{X^{s,b}}:\tilde u=u\ {\rm on\ }[0,T]\times\mathbb{T}\}<\infty.\]
\end{definition}

Bourgain spaces were originally used for studying low-regularity well-posedness \cite{Bourgain-93}, and then grow into a powerful tool for dispersive PDEs. We refer the reader to \cite[Appendix A.1]{CXZZ-25} for some basic facts (some properties in need of this paper is recalled in Appendix~\ref{appendix_Bourgain}), and \cite[Appendix A.2]{CXZZ-25} for a sketched proof of the following well-posedness result:

\begin{lemma}\label{lem_wp}
    Let $s\geq 1$ and $b\in (1/2,1)$ be arbitrarily given. Then for every $u_0\in H^s(\mathbb{T})$ and $f\in L^2(0,1;H^s(\mathbb{T}))$, the NLS equation \eqref{deterministicNLS} admits a unique solution $u\in X^{s,b}_1$.
\end{lemma}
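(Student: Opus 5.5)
The plan is to run the standard Bourgain-space contraction argument for the Duhamel formulation of \eqref{deterministicNLS}. Throughout, $s\ge1$ and $b\in(1/2,1)$ are fixed.

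\textbf{Step 1: linear estimates on $[0,\delta]$.} Let $\psi$ be a smooth cutoff equal to $1$ on $[0,1]$ and set $\psi_\delta=\psi(\cdot/\delta)$. I will use the following facts for the free group $S(t)=e^{it\partial_x^2}$:
\[
\|\psi\, S(t)u_0\|_{X^{s,b}}\lesssim\|u_0\|_{H^s},
\]
\[
\Bigl\|\psi_\delta\int_0^tS(t-t')F(t')\,dt'\Bigr\|_{X^{s,b}}\lesssim \delta^{1-b+b'}\|F\|_{X^{s,b'}},
\]
with $b'\in(-1/2,0]$ and $b'>b-1$, so that the exponent $1-b+b'$ is positive.

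\textbf{Step 2: treat the damping and forcing as source terms.} I will rewrite the equation as $iu_t+u_{xx}=|u|^{p-1}u+f-ia u$. The term $a u$ satisfies $\|au\|_{X^{s,0}}\lesssim\|u\|_{X^{s,0}}\le\|u\|_{X^{s,b}}$, because multiplication by a smooth $a$ is bounded on $L^2_tH^s_x$. The forcing satisfies $\|f\|_{X^{s,0}_\delta}=\|f\|_{L^2H^s}$, which is why the hypothesis $f\in L^2(0,1;H^s)$ is exactly what is needed.

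\textbf{Step 3: the nonlinear estimate.} Since $p$ is odd, the nonlinearity is polynomial. Because $s\ge1>1/2$ and $b>1/2$, we have $X^{s,b}\hookrightarrow C_tH^s$, and $H^s(\mathbb T)$ is an algebra. This gives
\[
\||u|^{p-1}u\|_{X^{s,0}_\delta}\lesssim \delta^{1/2}\|u\|_{C_tH^s}^p\lesssim \delta^{1/2}\|u\|^p_{X^{s,b}_\delta},
\]
together with the analogous difference estimate. Combining Steps 1–3, a contraction on a ball of $X^{s,b}_\delta$ follows, with $\delta$ depending only on $\|u_0\|_{H^s}$ and $\|f\|_{L^2H^s}$. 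This yields local existence and uniqueness in $X^{s,b}_\delta$; uniqueness is unconditional within $X^{s,b}_\delta$ via the same difference estimate.

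\textbf{Step 4: globalize.} Iterating to cover $[0,1]$ requires an a priori bound on $\|u(t)\|_{H^s}$. For $s=1$ this comes from the energy balance for $E(u)$ in \eqref{energy-function}: its derivative is bounded by $C\|f\|_{H^1}\sqrt{E}$ plus a nonpositive damping term, so Gronwall gives $\sup_{[0,1]}E(u)\le C(E(u_0),\|f\|_{L^2H^1})$. For $s>1$ I will use persistence of regularity: the $H^1$ bound controls $\|u\|_{L^\infty}$, and the tame estimate $\||u|^{p-1}u\|_{H^s}\lesssim\|u\|_{L^\infty}^{p-1}\|u\|_{H^s}$ inserted into the Duhamel formula with Gronwall bounds $\|u\|_{C([0,1];H^s)}$. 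The local time $\delta$ therefore stays uniform, finitely many steps cover $[0,1]$, and gluing gives $u\in X^{s,b}_1$.

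The main obstacle is that gluing step: restricted Bourgain norms on adjacent intervals must be combined into a single $X^{s,b}_1$ bound, which is harder than concatenating $C_tH^s$ pieces. I will handle this using the fact that for $1/2<b<1$, multiplication by a smooth partition of unity subordinate to overlapping intervals is bounded on $X^{s,b}$, and the extensions on each piece can be summed.
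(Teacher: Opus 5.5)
Your argument is correct and follows the standard route the paper relies on: a fixed point using that $H^s(\mathbb{T})$ is an algebra, then globalization through the $H^1$ energy balance and $H^1\hookrightarrow L^\infty$, with persistence of $H^s$ regularity. Two small corrections and a simplification: the energy derivative also contains the non-signed term $\frac12\int_{\mathbb{T}} a''|u|^2$ (from integrating $\re\int_{\mathbb{T}} a u\,\overline{u_{xx}}$ by parts) and a forcing term of size $\|f\|_{H^1}E^{p/(p+1)}$, so the right inequality is $\frac{d}{dt}E\le C(1+\|f\|_{H^1})(1+E)$, which Gronwall handles just as well; and the gluing step you flag as the main obstacle can be skipped, since once $u\in C([0,1];H^s)$ is known the source $|u|^{p-1}u+f-iau$ lies in $L^2(0,1;H^s)=X^{s,0}_1$, so your Step 1 estimate applied on the whole interval gives $u\in X^{s,b}_1$ directly.
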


Due to Lemma~\ref{lemBourgain_embed}, for any $b>1/2$, the Bourgain space $X^{s,b}_T$ is continuously embedded into $C(0,T;H^s)$. Thus the uniqueness implies that the mild solution actually belongs to $X_1^{s,b}$.

\medskip

From now on we define the time-$1$ solution map of \eqref{deterministicNLS} as
\begin{align*}
    S\colon H^1(\mathbb{T})\times L^2(0,1;H^1(\mathbb{T}))&\to H^1(\mathbb{T}),\\
    (u_0,f)&\mapsto u(1).
\end{align*}
The map $S$ is differentiable (see, e.g., \cite[Corollary 5.6]{LP-15}). More precisely:

\begin{itemize}
    \item[\tiny$\bullet$] For $v_0\in H^1(\mathbb{T})$, it turns out that $D_{u_0} S(u_0,f) (v_0)$ is the time-$1$ solution of linearized equation \eqref{D_uS}, where $u$ solves the NLS \eqref{deterministicNLS} for $t\in [0,1]$. We denote this map by
\[D_{u_0} S(u_0,f) (v_0)=\mathcal{R}_u(1,0) v_0.\]
More generally, we also define the solution map of \eqref{D_uS} from time $s$ to $t$ by $\mathcal{R}_u(t,s)$.

    \item[\tiny$\bullet$] Similarly, if $g\in L^2(0,1;H^1(\mathbb{T}))$, then $D_f S(u_0,f)(g)$ is the time-$1$ solution of
    \begin{equation}\label{D_etaS}
    \left\{\begin{array}{ll}
iv_t+v_{xx}+ia(x) v=\frac{p+1}{2} |u|^{p-1} v+\frac{p-1}{2} |u|^{p-3} u^2 \bar{v}+g,\\
v(0,x)=0.
\end{array}\right.
\end{equation}
According to the Duhamel formula, we have
\begin{equation}\label{ADuhamel}
    D_f S(u_0,f) (g)=\int_0^1 \mathcal{R}_u (1,t) g(t)\, dt.
\end{equation}
\end{itemize}

The well-posedness of linear systems \eqref{D_uS} and \eqref{D_etaS} are readily established (see Lemma~\ref{lem_wplinearization}(a)), which yields the local boundedness of first-order derivatives of $S$. Similarly, higher-order derivatives of $S$ are nothing but higher-order linearizations of the NLS equation. And since the nonlinear term $|u|^{p-1} u$ is smooth in $u$, it is easy to prove that:

\begin{lemma}\label{lem_Ssmooth}
    The time-$1$ solution map $S\colon H^1\times L^2(0,1;H^1)\to H^1$ of the NLS equation \eqref{deterministicNLS} is smooth, and its derivatives of any order are locally bounded.
\end{lemma}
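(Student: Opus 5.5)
Smoothness of $S$ will follow from the implicit function theorem applied to the Duhamel formulation, posed in Bourgain spaces on short subintervals and then iterated to cover $[0,1]$; local boundedness of the derivatives comes out of the same argument via a priori bounds on the linearized equations.

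\emph{Setting up the map.} Fix $b\in(1/2,1)$ and a short time $\tau>0$. Define
\[
\mathcal{G}\colon H^1\times L^2(0,\tau;H^1)\times X^{1,b}_\tau\to X^{1,b}_\tau,\qquad
\mathcal{G}(u_0,f,u)=u-S_a(t)u_0+i\int_0^t S_a(t-s)\bigl(|u|^{p-1}u+f\bigr)(s)\,ds .
\]
Because $p$ is odd, $|u|^{p-1}u=u^{(p+1)/2}\bar u^{(p-1)/2}$ is a real-polynomial map. The multilinear estimates in $X^{1,b}$ together with the linear Duhamel estimate (Appendix~\ref{appendix_Bourgain}, already used for Lemma~\ref{lem_wp}) make the map $u\mapsto\int_0^t S_a(t-s)|u|^{p-1}u\,ds$ a bounded polynomial map from $X^{1,b}_\tau$ to itself. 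A bounded real-multilinear map is $C^\infty$, so $\mathcal{G}$ is $C^\infty$ in the Fréchet sense. Its derivatives of every order are bounded on bounded sets, with bounds polynomial in $\|u\|_{X^{1,b}_\tau}$.

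\emph{Applying the implicit function theorem.} The derivative $D_u\mathcal{G}=\Id-\mathcal{K}_u$ involves the linearized Duhamel operator $\mathcal{K}_u$ of \eqref{D_uS}. Its invertibility is exactly the well-posedness of \eqref{D_uS} in $X^{1,b}_\tau$, which Lemma~\ref{lem_wplinearization}(a) provides. For short $\tau$ this can also be seen directly: the time-localization gain $\tau^{\epsilon}$ in Bourgain spaces makes $\|\mathcal{K}_u\|<1$, with $\tau$ depending only on $\|u\|_{X^{1,b}}$. The implicit function theorem then shows that $(u_0,f)\mapsto u\in X^{1,b}_\tau$ is $C^\infty$ near every point. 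Composing with the continuous linear embedding $X^{1,b}_\tau\hookrightarrow C(0,\tau;H^1)$ (Lemma~\ref{lemBourgain_embed}) and evaluation at $t=\tau$ gives smoothness of the time-$\tau$ map. Applying the same construction on the intervals $[k\tau,(k+1)\tau]$ and using the chain rule yields smoothness of $S$ on all of $[0,1]$. This step needs the subdivision to be uniform, which holds because the solution's $X^{1,b}_1$ norm is controlled by the data.

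\emph{Local boundedness.} The main obstacle is making the bounds uniform on bounded sets $\|u_0\|_{H^1}+\|f\|_{L^2H^1}\le R$. I would first use the $H^1$-energy balance, with the energy $E$ from \eqref{energy-function}, to bound $\sup_t\|u(t)\|_{H^1}$ by $C(R)$. This gives a uniform step size $\tau(R)$ and a uniform bound $\|u\|_{X^{1,b}_1}\le C(R)$. The $k$-th derivative $D^kS$ solves the linearized equation \eqref{D_uS} with a forcing made of multilinear expressions in $u$ and the lower-order derivatives. By induction on $k$, Gronwall-type iteration over the $\lceil 1/\tau\rceil$ subintervals bounds each derivative by $C_k(R)$ times the product of the norms of the increments. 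This proves Lemma~\ref{lem_Ssmooth}.
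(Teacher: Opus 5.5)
Your proposal is correct and follows essentially the paper's route: the paper cites \cite{LP-15} for differentiability of $S$, treats derivatives of every order as solutions of (higher-order) linearized equations, and bounds them via Lemma~\ref{lem_wplinearization}(a), which is what your implicit-function-theorem setup and induction on $k$ make explicit. One small precision: Lemma~\ref{lem_wplinearization}(a) as stated only solves $(\Id-\mathcal{K}_u)v=S_a(t)v_0$, so invertibility of $\Id-\mathcal{K}_u$ on all of $X^{1,b}_\tau$ rests on your short-time contraction argument, which here follows from $\bigl\|\int_0^t S_a(t-s)F(s)\,ds\bigr\|_{X^{1,b}_\tau}\le C\|F\|_{L^2(0,\tau;H^1)}\le C\tau^{1/2}\|F\|_{L^\infty(0,\tau;H^1)}$ together with the algebra property of $H^1(\mathbb{T})$.
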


\subsubsection{Global stability}

The dissipativity hypothesis $(\mathbf{H3})$ is often a consequence of global stability, which means in the absence of external force, the trajectory decays to $0$ exponentially. As explained in the introduction, the difficulty lies in the localized structure of damping, and can be tackled by Carleman estimates. Specifically, \cite{LM-23} proved the following result for cubic NLS, and then \cite{CXZZ-25} extends it to nonlinearity of any odd order $p\ge 3$:

\begin{proposition}[{\cite[Proposition 2.2]{CXZZ-25}}]\label{prop_stability}
    There exist constants $\beta,C>0$, such that
    \[E(u(t))\leq Ce^{-\beta t} E(u_0)\quad  \text{for any } u_0\in H^1(\mathbb{T}),\ t\geq 0,\]
    where $u(t)$ stands for the solution of NLS equation \eqref{deterministicNLS} with $f(t,x)\equiv 0$.
\end{proposition}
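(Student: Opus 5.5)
The plan is to combine the energy identity with an observability inequality, and then iterate over time intervals of fixed length. Multiplying \eqref{deterministicNLS} (with $f\equiv0$) by $\bar u$ and by $-\bar u_{xx}+|u|^{p-1}\bar u$, and taking real parts, gives the dissipation law
\[
\frac{d}{dt}E(u(t))=-\int_{\mathbb{T}}a(x)\bigl(|u|^2+|u_x|^2+|u|^{p+1}\bigr)\,dx+\text{(lower-order terms involving }a'\text{)}.
\]
The $a'$-terms come from the commutator $[\partial_x,a]$. They are bounded by $\int \sqrt{a}\,|u|\,|\sqrt{a}\,u_x|$ after writing $|a'|\le C\sqrt a$, which holds for smooth nonnegative $a$ up to a harmless modification. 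So they can be absorbed, and $E$ is nonincreasing up to a controlled error. The whole proof therefore reduces to proving an \emph{observability inequality}: there exist $T,C>0$ such that every solution satisfies
\[
E(u(0))\le C\int_0^T\!\!\int_{\mathbb{T}} a(x)\bigl(|u|^2+|u_x|^2+|u|^{p+1}\bigr)\,dx\,dt .
\]
Once this holds, we get $E(u(T))\le (1-C^{-1})E(u(0))$, and iterating over the intervals $[kT,(k+1)T]$ yields $E(u(t))\le Ce^{-\beta t}E(u_0)$.

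On bounded sets $E(u_0)\le R$, I would prove observability by the compactness--uniqueness argument. Suppose it fails along a sequence $u^n$. Normalize, and use the Bourgain-space bounds of Lemma~\ref{lem_wp} to pass to a limit. Strong convergence away from $\omega=\{a>0\}$ follows from propagation of compactness: in 1D every geodesic of $\mathbb{T}$ enters $\omega$, so the geometric control condition holds. The limit is then a solution of an NLS (or, if the energy tends to $0$, a linear Schr\"odinger equation with potential) that vanishes on $(0,T)\times\omega$. Unique continuation then forces it to vanish identically. This unique continuation step is where a Carleman estimate is used, with the potential $|u|^{p-1}\in L^\infty$ treated as a bounded perturbation.

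The main obstacle is making $C$ and $\beta$ \emph{independent of $E(u_0)$}, since the argument above only gives constants depending on $R$. For large energy I would not use compactness. Instead I would use a quantitative Carleman estimate for $iv_t+v_{xx}=Vv$, whose constant depends on $\|V\|_{L^\infty}$ like $e^{C\|V\|^{\alpha}}$. To avoid the resulting loss, I would exploit that for large data the dissipative term $\int a|u|^{p+1}$ is itself large. Concretely, I would try a rescaling $u_\lambda(t,x)=\lambda^{2/(p-1)}u(\lambda^2t,\lambda x)$ adapted to the energy level, so as to reduce a large-energy solution on a short time window to a normalized-energy one. On that window I would apply the uniform Carleman estimate of \cite{LM-23}, adapted to exponent $p$, showing that a fixed fraction of the energy is dissipated per unit time. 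Combining the large-energy regime with the bounded-set result then gives uniform $C$ and $\beta$.
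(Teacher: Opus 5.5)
There is a genuine gap, and it is exactly where you locate the main obstacle. The statement requires $C,\beta$ independent of $u_0$. Your compactness--uniqueness argument is a reasonable sketch on $\{E(u_0)\le R\}$, but it can only give constants depending on $R$. So everything the proposition asserts beyond classical semi-global decay lies in the large-energy regime, and there you only describe what you would try.

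The rescaling does not reduce to the same problem. The function $u_\lambda(t,x)=\lambda^{2/(p-1)}u(\lambda^2t,\lambda x)$ solves $i\partial_t u_\lambda+\partial_x^2u_\lambda+i\lambda^2a(\lambda x)u_\lambda=|u_\lambda|^{p-1}u_\lambda$ on $\mathbb{R}/(2\pi/\lambda)\mathbb{Z}$, not on $\mathbb{T}$. Moreover, the mass part of $E$ scales like $\lambda^{(5-p)/(p-1)}$, while the gradient and potential parts scale like $\lambda^{(p+3)/(p-1)}$. Normalizing a large energy forces $\lambda\to0$. You would therefore need observability estimates uniform over ever longer tori with ever weaker, wider damping, on original-time windows $[0,\lambda^2T]$ that shrink. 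The constants would also have to be good enough that the fractions dissipated per window add up to a fixed fraction per unit time.

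No Carleman estimate on a fixed torus provides this. Appealing to ``the uniform Carleman estimate of \cite{LM-23}, adapted to exponent $p$'' amounts to assuming the result; for $p=3$ it is essentially the statement itself. The paper does not reprove the proposition either. It quotes \cite[Proposition 2.2]{CXZZ-25}, which extends the Carleman-estimate argument of \cite{LM-23} from the cubic case to odd $p\ge 3$. The uniformity in the data comes from that quantitative Carleman analysis, which is precisely the ingredient your plan leaves open.

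There is also a smaller error in your first step. The commutator term equals $-\re\int a'u\bar u_x=\frac12\int a''|u|^2$. The bound $|a'|\le C\sqrt a$ is true (Glaeser's inequality), but it gives only $|\re\int a'u\bar u_x|\le C\|u\|_{L^2(\supp a)}\|\sqrt a\,u_x\|_{L^2}$. That is a single factor $\sqrt a$, not two, so the first factor is not controlled by $\int a|u|^2$. Indeed, $a''$ is not bounded by a multiple of $a$ near the zero set of $a$.

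Consequently $E$ need not be nonincreasing, and $E(u(T))\le(1-C^{-1})E(u(0))$ does not follow from your observability inequality. The error accumulated over $[0,T]$ is of size $\frac T2\|a''\|_{L^\infty}\|u_0\|_{L^2}^2$, which need not be small compared with the gain $C^{-1}E(u_0)$. This is repairable. For example, run the iteration for $E+M\|u\|_{L^2}^2$, use $\frac{d}{dt}\|u\|_{L^2}^2=-2\int a|u|^2$ together with an $L^2$-level observability estimate, and take $M$ large. As written, however, the absorption claim is false.
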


\medskip

For later convenience, we also mention the global stability for linear evolutions, namely the exponential decay of the operator semigroup $S_a(t)$. According to \cite[Proposition 4.1]{RZ-09}, we have 
\[\|S_a(t)\|_{\mathcal{L}(H^1)}\le Ce^{-\beta t}\]
Then \cite{CXZZ-25} realized that the constant $C$ can be chosen to be $1$, up to an equivalent norm. The benefit is that $S_a(t)$ is immediately contractive for any $t>0$.
% In fact, according to \cite[Proposition 4.1]{RZ-09}, there exists a constant $\beta>0$, such that for any $s\ge 0$, one can find $C>0$ satisfying
% \[\|S_a(t)\|_{\mathcal{L}(H^1)}\le Ce^{-\beta t}\quad \text{for any }t\ge 0.\]
% Invoking a trick in \cite{Paz-83}, the constant $C$ can be taken to be $1$ if we replace the norm by an equivalent one. We shall denote this equivalent space by $\tilde{H}^1$ from now on.

\begin{lemma}[{\cite[Lemma 4.10]{CXZZ-25}}]\label{lem_equivalentnorm}
    There exists an equivalent norm $\|\cdot\|_{\tilde{H}^1}$ on $H^1(\mathbb{T})$, and a constant $\beta>0$, so that
    \[\|S_a(t)\|_{\mathcal{L}(\tilde{H}^1)}\le e^{-\beta t}\quad \text{for any }t\ge 0.\]
\end{lemma}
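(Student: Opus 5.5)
The plan is Pazy's renorming trick, starting from the decay estimate of \cite[Proposition 4.1]{RZ-09} recalled just above: there are constants $C_1\ge 1$ and $\beta_0>0$ with $\|S_a(t)\|_{\mathcal{L}(H^1)}\le C_1e^{-\beta_0 t}$ for all $t\ge 0$. I would fix any $\beta\in(0,\beta_0)$. Since the paper regards $H^1(\mathbb{T})$ as a (real) Hilbert space and later uses it as the phase space $X$, I prefer a Hilbertian renorming to the sup-type one. So I would define
\[
\|u\|_{\tilde H^1}^2:=\int_0^\infty e^{2\beta t}\|S_a(t)u\|_{H^1}^2\,dt .
\]
This comes from the bilinear form $\int_0^\infty e^{2\beta t}\re\,(S_a(t)u,S_a(t)w)_{H^1}\,dt$. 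That form is symmetric and real-bilinear because each $S_a(t)$ is real-linear.

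\emph{Step 1: upper bound.} The decay estimate gives
\[
\|u\|_{\tilde H^1}^2\le C_1^2\int_0^\infty e^{-2(\beta_0-\beta)t}\,dt\,\|u\|_{H^1}^2=\frac{C_1^2}{2(\beta_0-\beta)}\|u\|_{H^1}^2 .
\]
In particular the integral converges.

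\emph{Step 2: lower bound.} This is the only point needing more than the decay estimate, and I regard it as the main (mild) obstacle. Positivity and equivalence require controlling $\|u\|_{H^1}$ by the integral. Here I would use that $S_a$ is a $C_0$-group, not merely a semigroup. By the uniform boundedness principle, $M:=\sup_{t\in[0,1]}\|S_a(-t)\|_{\mathcal{L}(H^1)}<\infty$. Then $\|u\|_{H^1}=\|S_a(-t)S_a(t)u\|_{H^1}\le M\|S_a(t)u\|_{H^1}$ for $t\in[0,1]$. Integrating over $[0,1]$ gives
\[
\|u\|_{H^1}^2\le M^2\int_0^1\|S_a(t)u\|_{H^1}^2dt\le M^2\|u\|_{\tilde H^1}^2 .
\]
Hence $\|\cdot\|_{\tilde H^1}$ is a genuine Hilbert norm equivalent to $\|\cdot\|_{H^1}$.

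\emph{Step 3: contractivity.} For $s\ge0$, the semigroup law and the substitution $r=t+s$ give
\[
\|S_a(s)u\|_{\tilde H^1}^2=\int_0^\infty e^{2\beta t}\|S_a(t+s)u\|_{H^1}^2dt=e^{-2\beta s}\int_s^\infty e^{2\beta r}\|S_a(r)u\|_{H^1}^2dr\le e^{-2\beta s}\|u\|_{\tilde H^1}^2 .
\]
This is exactly $\|S_a(s)\|_{\mathcal{L}(\tilde H^1)}\le e^{-\beta s}$.

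If only a Banach norm were needed, the simpler choice $\|u\|:=\sup_{t\ge0}e^{\beta_0 t}\|S_a(t)u\|_{H^1}$ would also work. It satisfies $\|u\|_{H^1}\le\|u\|\le C_1\|u\|_{H^1}$ by taking $t=0$, with no group property needed, and it gives the same contraction estimate. The integral version is chosen because it preserves the Hilbert-space framework required in Section~\ref{sec_RDS}, at the cost of a slightly smaller rate $\beta<\beta_0$.
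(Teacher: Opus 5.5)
Your argument is correct. It reaches the lemma through a different renorming from the one the paper relies on.

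The paper gives no proof of its own. It takes $\|S_a(t)\|_{\mathcal{L}(H^1)}\le Ce^{-\beta t}$ from \cite[Proposition 4.1]{RZ-09} and imports from \cite{CXZZ-25} the fact that $C$ can be normalized to $1$ by passing to an equivalent norm. The standard device for this is Pazy's sup-norm $\sup_{t\ge 0}e^{\beta t}\|S_a(t)u\|_{H^1}$, which is the alternative you mention at the end.

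That route is cheaper. The equivalence constants are $1$ and $C$, read off at $t=0$ and from the decay bound. Only the semigroup law is used, and the full decay rate is kept.

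Your weighted $L^2$-in-time norm (of Datko--Lyapunov type) costs two things:
\begin{itemize}
\item It needs the group property for coercivity. You handle this correctly through $\sup_{t\in[0,1]}\|S_a(-t)\|_{\mathcal{L}(H^1)}<\infty$, which is available because $i\partial_x^2-a$ is a bounded perturbation of a skew-adjoint generator.
\item It loses an arbitrarily small amount of the rate, which is irrelevant here.
\end{itemize}

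In exchange, $\tilde H^1$ is genuinely a Hilbert space. The abstract setting of Section~\ref{sec_RDS} formally demands this of $X$, since the adjoint in $R^\gamma=A^*(AA^*+\gamma)^{-1}$ is taken in $X$. With the sup-norm this is only a cosmetic issue: one can build the pseudo-inverse with the original $H^1$ inner product and transfer \eqref{pseudo-inverse-V} to $\tilde H^1$ at the cost of the equivalence constant. Your construction, however, lets the framework apply verbatim.
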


% Indeed, this equivalent norm can be implicitly defined by (see \cite[Chapter 1, Theorem 5.2]{Paz-83})
% \[\|u_0\|_{\tilde{H}^1}:=\sup_{t\ge 0} \|S_a(t)u_0\|_{H^1}.\]
% Alternatively, in \cite{CXZZ-25} the authors found an explicit definition that when $s=2n$, one can choose
% \[\|u\|_{\tilde{H}^1}=\sum_{k=0}^n A^{n-k}\|(i\partial_x^2-a(x))^k u_0\|_{L^2},\quad \text{where }A\gg 1.\]

\subsection{Asymptotic compactness via nonlinear smoothing}\label{subsec_ACviaNS}

We first explain the idea of nonlinear smoothing, and recall the EAC for NLS equation from \cite{CXZZ-25}. Then we carry out the proof of Proposition~\ref{prop_H2} via multilinear estimates in Bourgain spaces.

\subsubsection{Nonlinear smoothing}\label{subsubsec_nonlinearsmoothing}

% Bourgain's original discovery in \cite{Bourgain-98} of nonlinear smoothing effect is that, for the solution $u(t)$ of NLS equation on $\mathbb{R}^2$, the nonlinear part $u(t)-S(t)u_0$ gains extra regularity. Then Keraani and Vargas \cite{KV-09} prove nonlinear smoothing on $\mathbb{R}^d$. When extending to NLS equation on torus $\mathbb{T}$, similar results are proved by \cite{ET-13,McConnell-22} with an additional phase shift (cf.~the shift term $e^{-i\theta_u(1)}$ in \eqref{H2-1}). The phase shift cancels out the ``single resonance", which is an essential obstacle that prevents nonlinear smoothing; see, e.g., \cite[Section 4.1]{ET-16}.

We follow \cite{CXZZ-25,ET-13,McConnell-22} to introduce the terminology of resonance decomposition. For functions $u_1,\dots ,u_p\colon \mathbb{T}\to \mathbb{C}$, define the $p$-multiplication operator
\begin{equation}\label{p_product}
    \mathcal{N}(u_1,\dots,u_p)=\prod_{l\, {\rm odd}}u_l \prod_{l\, {\rm even}}\bar u_l.
\end{equation}
Written in Fourier coefficients, for each $k\in \mathbb{Z}$ we have
\[\mathcal{F} \mathcal{N}(u_1,\dots ,u_p)(k)=(2\pi)^{-(p-1)/2}\sum_{k=k_1-k_2+\cdots+k_p}\prod_{l\, {\rm odd}}\widehat u_l(k_l) \prod_{l\, {\rm even}} \overline{\widehat u_l(k_l)}\]
A configuration of frequencies $(k_1,\dots,k_p)$ with $k_l\in\mathbb{Z}$ and $k=k_1-k_2+\cdots +k_p$ is called {\it resonant}, if there is an odd $m$ such that 
$k=k_m$. Define an auxiliary $p$-linear form $\mathcal{N}_{R}$ as
\begin{equation}\label{resonantfrequency}
\mathcal{F}\mathcal{N}_R(u_1,\dots,u_p)(k)=(2\pi)^{-(p-1)/2}\sum_{\substack{m=1\\{\rm odd}}}^{p}\sum_{\substack{ k=k_1-k_2+\cdots+k_p\\k=k_m}}\prod_{l\, {\rm odd}}\widehat u_l(k_l) \prod_{l\, {\rm even}} \overline{\widehat u_l(k_l)}.
\end{equation}
We mention that the single-resonant (i.e.~$k=k_m$ for exactly one odd $m$) terms appear in $\mathcal{N}_R$ exactly once, while other resonant terms may occur repeatedly. Then the difference 
\[\mathcal{N}_{NR}(u_1,\dots ,u_p):=\mathcal{N}(u_1,\dots ,u_p)-\mathcal{N}_R(u_1,\dots ,u_p)\] 
carries no single-resonance. The essence of nonlinear smoothing is that $\mathcal{N}_{NR}$ enjoys extra regularity. More precisely, we have the following multilinear estimate (see also \cite{ET-13,McConnell-22}). The difficulty lies in the $X^{s+\sigma,-b'}$ (rather than $X^{s,-b'}$) regularity of $\mathcal{N}_{NR}$.

\begin{lemma}[{\cite[Lemma 2.9]{CXZZ-25}}]\label{lem_nonlinearsmoothing}
    Let $T>0,s\geq 1,b>1/2$ and $\sigma\in(0,1/4]$ be arbitrarily given. Then for every $b'\in [\sigma,1/2)$, there exists a constant $C>0$ such that for any $u_1,\cdots,u_p\in X_T^{s,b}$,
    \begin{equation*}
        \|\mathcal{N}_{NR}(u_1,\dots,u_p)\|_{X_T^{s+\sigma,-b'}}\leq  C\prod_{l=1}^{p}\|u_l\|_{X_T^{s,b}}.
    \end{equation*}
\end{lemma}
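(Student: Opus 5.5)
The plan is to argue by duality on the full line in time. Pick extensions $\tilde u_l\in X^{s,b}$ with $\|\tilde u_l\|_{X^{s,b}}\le 2\|u_l\|_{X^{s,b}_T}$. Since $\mathcal{N}_{NR}(\tilde u_1,\dots,\tilde u_p)$ restricts to $\mathcal{N}_{NR}(u_1,\dots,u_p)$ on $[0,T]$, it suffices to prove the estimate in $X^{s+\sigma,-b'}$ without time restriction. The dual of $X^{s+\sigma,-b'}$ is $X^{-s-\sigma,b'}$, so we must show
\[
\Big|\sum_{\substack{k=k_1-k_2+\cdots+k_p\\ k\neq k_m\ (m \text{ odd})}}\int_{\tau=\tau_1-\tau_2+\cdots}\langle k\rangle^{s+\sigma}\,\overline{\hat w(\tau,k)}\prod_{l\,\rm odd}\hat u_l(\tau_l,k_l)\prod_{l\,\rm even}\overline{\hat u_l(\tau_l,k_l)}\Big|\lesssim \|w\|_{X^{0,b'}}\prod_l\|u_l\|_{X^{s,b}}.
\]
One caveat: the configurations with two or more resonant indices are subtracted with multiplicity in $\mathcal{N}_R$. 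These leftover resonant pieces factor into products of $\|u_m\|_{L^2}^2$-type quantities and lower multilinear terms. I would treat them separately by a trivial estimate, or note that they are not problematic.

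We may take all Fourier coefficients nonnegative. Then decompose dyadically, $|k_l|\sim N_l$ and $|k|\sim N$, and order the input frequencies as $N_{(1)}\ge N_{(2)}\ge\cdots$. Because $k=\sum\pm k_l$, we always have $N\lesssim N_{(1)}$. The key algebraic quantity is the resonance function
\[
\Phi:=(\tau+k^2)-\sum_{l\,\rm odd}(\tau_l+k_l^2)+\sum_{l\,\rm even}(\tau_l+k_l^2)=k^2-k_1^2+k_2^2-\cdots-k_p^2,
\]
so the largest of the modulations $\langle\tau+k^2\rangle,\langle\tau_l\pm k_l^2\rangle$ is $\gtrsim|\Phi|$. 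The tools are the embedding $X^{0,3/8}\hookrightarrow L^4_{t,x}$ (Bourgain's $L^4$ estimate on $\mathbb{T}$), $X^{s,b}\hookrightarrow L^\infty_tH^s\hookrightarrow L^\infty_{t,x}$ for the low-frequency factors, and interpolation of these two.

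\textbf{Case 1: $N\lesssim N_{(2)}$.} Here the weight $\langle k\rangle^{s+\sigma}$ is bounded by $N_{(1)}^{s}N_{(2)}^{\sigma}$. We put $s$ derivatives on the top factor and $\sigma$ derivatives on the second one, which still has $s-\sigma\ge 3/4>1/2$ derivatives to spare. We then close with $L^4\cdot L^4\cdot L^\infty\cdots$ in H\"older, using only $X^{0,3/8}\subset X^{0,b'}$-type bounds. The dyadic sums converge thanks to the spare derivatives.

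\textbf{Case 2: $N_{(2)}\ll N$.} Then $N_{(1)}\sim N$, and the top frequency $k_m$ is essentially the output.

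If $m$ is even (a conjugated factor), then $\Phi=k^2+k_m^2+O(N_{(2)}^2)\gtrsim N^2$. Some modulation is therefore $\gtrsim N^2$, and we gain $N^{2\min(b',b-3/8)}$, which is more than $N^{\sigma}$.

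If $m$ is odd, nonresonance gives $1\le|k-k_m|\lesssim N_{(2)}$, and $\Phi=(k-k_m)(k+k_m)+O(N_{(2)}^2)$.
\begin{itemize}
\item In subcase 2a, $N_{(2)}^2\ll N$, so $|\Phi|\gtrsim N$.
 \begin{itemize}
 \item If the output modulation dominates, we gain $N^{b'}\ge N^\sigma$ from $\langle\tau+k^2\rangle^{b'}$. This is exactly where $b'\ge\sigma$ is used.
 \item If an input modulation dominates, we gain $N^{b-3/8}$ after placing that factor in $L^2$ and the rest in $L^4/L^\infty$. This suffices at least for $\sigma\le 1/8$.
 \item For the remaining range of $\sigma$, I would interpolate with the trivial bound or use the refined bilinear $L^2$ estimate for separated frequencies. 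I expect this bookkeeping to be the main obstacle.
 \end{itemize}
\item In subcase 2b, $N_{(2)}^2\gtrsim N$, so $N^\sigma\le N_{(2)}^{2\sigma}\le N_{(2)}^{1/2}$ since $\sigma\le1/4$. We then absorb $N^\sigma$ into the second factor, which carries $s\ge1$ derivatives. Of these, only $1/2+$ are needed for $L^\infty$ control, and this leaves the room $s-\tfrac12-\epsilon\ge 2\sigma$. This is where the restriction $\sigma\le 1/4$ enters.
\end{itemize}

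Finally, summing the dyadic pieces uses the small surplus powers of $N_{(2)}$ and $N$ obtained in each case. The constant depends on $T,s,b,b',\sigma$ only.
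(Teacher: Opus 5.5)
\textbf{Verdict: genuine gap.} Your architecture is sound, and the cases you actually close are fine: duality against $X^{-s-\sigma,b'}$, dyadic pieces, the resonance function $\Phi$, and the splits $N_{(2)}$ versus $N$ and $N_{(2)}^2$ versus $N$. The gap is in subcase 2a when an input modulation dominates. There you only get a gain of $N^{b-3/8}$, which covers $\sigma\lesssim 1/8$. The range $\sigma\in(1/8,1/4]$ is left open, and it contains $\sigma=1/4$, $s=1$, which is the value used in Proposition~\ref{prop_H2} to reach $H^{5/4}$.

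Neither remedy you suggest closes this range. On $\mathbb{T}$ at unit time scale, the bilinear $L^2$ estimate gives no gain for separated frequencies: two single modes $e^{i(Nx-N^2t)}$ and $e^{i(Mx-M^2t)}$ already saturate it. Interpolating the $N^{b-3/8}$ bound with a bound that has no gain cannot produce more than $N^{b-3/8}$. The loss comes from sending the high-modulation factor through $X^{0,3/8}\hookrightarrow L^4$, which uses up $3/8$ of its modulation. You also cannot put the dual function $w$ in $L^4$ instead, because the statement allows $b'$ as small as $\sigma<3/8$.

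The missing idea is to keep the high-modulation factor in an $L^2_t$-based norm, so that you retain the full gain $|\Phi|^{-b}\lesssim N^{-b}$.
\begin{itemize}
\item If that factor is the top-frequency one $u_m$, bound the pairing by $\|w\|_{L^2_{t,x}}\|u_m\|_{L^2_{t,x}}\prod_{l\neq m}\|u_l\|_{L^\infty_{t,x}}$.
\item If it is a low-frequency factor $u_j$ with $|k_j|\sim N_j\le N_{(2)}$, use the mixed-norm H\"older bound $\|w\|_{L^2_{t,x}}\|u_m\|_{L^\infty_tL^2_x}\|u_j\|_{L^2_tL^\infty_x}\prod\|u_l\|_{L^\infty_{t,x}}$. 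The top factor costs nothing in $L^\infty_tL^2_x$ because $b>1/2$. Bernstein costs only $N_j^{1/2}$, which the $s\ge1$ derivatives of $u_j$ absorb.
\end{itemize}
In both cases the net power is $N^{\sigma-b}$. So the only case with a weak gain is when the output modulation dominates, and there $b'\ge\sigma$ is exactly what is needed. At $b'=\sigma$ that case has no surplus power of $N$. The dyadic sum must then be done by Cauchy--Schwarz over $N\sim N_{(1)}$, not by the decay you invoke at the end.

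There is also an endpoint slip in subcase 2b, again at the case the paper uses. Putting the second factor in $L^\infty$ leaves $N_{(2)}^{2\sigma+1/2-s}$, which equals $N_{(2)}^0$ when $s=1$ and $\sigma=1/4$. The sum over $N_{(2)}$ then diverges logarithmically, and your claim $s-\tfrac12-\epsilon\ge 2\sigma$ fails there. Put the two top factors in $L^4_{t,x}$ instead, with $w$ kept in $L^2_{t,x}$ as in your Case 1; this needs only $2\sigma<s$.
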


Note that the nonlinear term in the NLS equation is $|u|^{p-1}u=\mathcal{N}(u,\dots ,u)$. Then the above multilinear estimate, combined with Lemma~\ref{lemBourgain_S(t)estimate} and phase shift $u(t)\mapsto e^{i\theta_u(t)}u(t)$ (which serves to remove the ``worst" single-resonant part $\mathcal{N}_R(u,\dots ,u)$), leads to nonlinear smoothing: 

\begin{proposition}[{\cite[Theorem 3.1]{CXZZ-25}}]\label{prop_H1}
    Let $R,\sigma>0$ be arbitrarily given. Then there exist constants $C,\kappa>0$, and a bounded subset $Y\subset H^{1+\sigma}(\mathbb{T})$, with the following property. Assume the external force $f\colon \mathbb{R}^+\to H^{1+\sigma}(\mathbb{T})$ satisfies
    \[\int_{n-1}^n \|f(t)\|_{H^{1+\sigma}(\mathbb{T})}^2\, dt\le R\quad \text{for any }n\in \mathbb{N}.\]
    Then for any $u_0\in H^1$, the solution $u(t)$ of NLS equation \eqref{deterministicNLS} satisfies
    \[\dist_{H^1} (u(t),Y)\le C(1+E(u_0))e^{-\kappa t}\quad \text{for any } t\geq 0.\]
\end{proposition}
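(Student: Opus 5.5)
The plan is to realize $Y$ as a closed ball of $H^{1+\sigma}(\mathbb{T})$, by splitting each trajectory on every unit interval into a gauged linear part and a smoother Duhamel remainder. I assume $\sigma\in(0,1/4]$, which is the range of Lemma~\ref{lem_nonlinearsmoothing}; larger $\sigma$ would need a bootstrap or a higher-order multilinear estimate, which I do not address. The constants must end up depending only on $R,\sigma$. The same constants serve all forces because the hypothesis on $f$ is uniform over unit intervals.

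\emph{Uniform energy bound.} I first show that forced trajectories are uniformly bounded in $H^1$. Combining Proposition~\ref{prop_stability} with a perturbation argument on unit intervals should give
\[E(u(t))\le Ce^{-\beta t}E(u_0)+C(R).\]
The perturbation argument compares the forced solution on $[n,n+1]$ with the unforced one issuing from $u(n)$. It uses Lemma~\ref{lem_wp} and local Lipschitz dependence on $f$, treating the energy balance with source $\re\int f\,\overline{u_t}$ by integrating by parts in time. Consequently, after a waiting time $T_0\sim \beta^{-1}\log(1+E(u_0))$, the trajectory stays in a fixed $H^1$-ball $B_0=B_0(R)$.

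\emph{One-step smoothing.} Fix a unit interval $[n,n+1]$ and gauge the solution, $w(t)=e^{i(\theta_u(t)-\theta_u(n))}u(t)$ with $\theta_u$ as in \eqref{theta(u,t)def}. The phase derivative exactly cancels the single-resonant contribution $\frac{p+1}{4\pi}\|u\|_{L^{p-1}}^{p-1}u$ contained in $\mathcal{N}_R(u,\dots,u)$. What remains of the resonant part consists of multiply resonant terms, such as $|\hat u(k)|^2\hat u(k)$ when $p=3$. These act diagonally in Fourier and are bounded in $H^{1+\sigma}$ by $\ell^2\subset\ell^\infty$ arguments. The Duhamel formula then reads
\[w(t)=S_a(t-n)u(n)-i\int_n^t S_a(t-s)\bigl(\mathcal{N}_{NR}(u,\dots,u)+\text{(multiply resonant)}+e^{i\theta}f\bigr)\,ds,\]
where for brevity I do not track how the gauge factor is distributed among the $u$'s and $\bar u$'s inside $\mathcal{N}_{NR}$. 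By Lemma~\ref{lem_nonlinearsmoothing} and the linear Bourgain estimate (Lemma~\ref{lemBourgain_S(t)estimate}, with the damping handled as a bounded perturbation in $X^{s,b}$), the integral term lies in $X^{1+\sigma,b}_{[n,n+1]}$. Its norm is at most $C(\|u\|_{X^{1,b}})$, which in turn is bounded by $C(\|u(n)\|_{H^1},R)$ via Lemma~\ref{lem_wp}. Hence
\[u(n+1)=e^{-i\phi_n}S_a(1)u(n)+g_n,\qquad \phi_n\in\mathbb{R},\qquad \|g_n\|_{H^{1+\sigma}}\le M(\|u(n)\|_{H^1}).\]

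\emph{Iteration and choice of $Y$.} The scalar phases commute with $S_a$, so unrolling the recursion gives
\[u(n)=e^{-i\Phi_n}S_a(n)u_0+\sum_{j<n} e^{-i\Phi_{n,j}}S_a(n-1-j)g_j.\]
By Lemma~\ref{lem_equivalentnorm}, together with its $H^{1+\sigma}$-analogue (exponential decay of $S_a$ on $H^{1+\sigma}$, as in \cite{RZ-09}), the sum over $j\ge T_0$ is bounded in $H^{1+\sigma}$ by $\sum_k e^{-\beta k}M(B_0)$. I therefore take $Y$ to be the closed $H^{1+\sigma}$-ball of radius $2\sum_k e^{-\beta k}M(B_0)$, enlarged slightly to absorb the non-integer times $t\in(n,n+1)$, which are treated via the same one-step formula. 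The remaining terms, namely $S_a(n)u_0$ and the finitely many pieces with $j<T_0$, are propagated by $S_a$ and decay in $H^1$ like $e^{-\beta(n-T_0)}$. This yields $\dist_{H^1}(u(n),Y)\lesssim e^{-\beta(n-T_0)}\,\mathrm{poly}(E(u_0))$.

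\emph{Main obstacle.} The hardest step is converting the transient bound $\mathrm{poly}(E(u_0))e^{\beta T_0}e^{-\beta n}$ into the linear factor $C(1+E(u_0))e^{-\kappa t}$. I would first try the trivial bound $\dist_{H^1}(u(t),Y)\le\|u(t)\|_{H^1}\le C(1+E(u_0))$, valid for $t\le C'T_0$, and interpolate it with the decaying estimate for $t\ge C'T_0$, at the cost of a smaller $\kappa$. The point to verify is that $M(\cdot)$ grows at most polynomially in the energy, so that $e^{\beta T_0}=(1+E(u_0))^{O(1)}$ can be absorbed by sacrificing part of the exponential rate. The secondary technical point is the multiply resonant bookkeeping in the gauge transform for general odd $p$.
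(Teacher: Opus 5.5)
Your one-step mechanism is the one the paper invokes: gauge by $\theta_u$, apply Lemma~\ref{lem_nonlinearsmoothing} at $s=1$, use Duhamel with Lemma~\ref{lemBourgain_S(t)estimate}, then unroll using the decay of $S_a$. The paper itself gives no proof beyond citing \cite[Theorem 3.1]{CXZZ-25} with this sketch. Your proposal, however, leaves two genuine gaps.

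\textbf{First gap: the range of $\sigma$.} The statement is for every $\sigma>0$, and the paper actually uses $\sigma=2$, since $Y\subset H^3$ is needed for $(\mathbf{H4})$. You stop at $\sigma\le 1/4$. Extending this is not a formality. The trajectory never becomes smoother, so you cannot simply restart the argument from $H^{1+\sigma}$ data; the bootstrap has to be carried out on the decomposition itself:
\begin{itemize}
\item On $[j,j+1]$, split the gauged solution into a rough part $S_a(t-j)\rho_j$, with $\|\rho_j\|_{H^1}\lesssim(1+E(u_0))e^{-\kappa j}$, and a smooth part $S_a(t-j)y_j+D_j$, bounded in $X^{1+\sigma,b}$.
\item Expand $\mathcal{N}_{NR}$ multilinearly.
\item Apply Lemma~\ref{lem_nonlinearsmoothing} with $s=1+\sigma$ to the purely smooth term, which then lies in $X^{1+2\sigma,-b'}$.
\item Apply it with $s=1$ to every term containing a rough factor. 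Such a term lies only in $X^{1+\sigma,-b'}$, but its norm is exponentially small. After propagation by $S_a$ it can be moved into the $H^1$-small remainder.
\end{itemize}
Finitely many such steps, each gaining at most $1/4$, reach any $\sigma$.

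\textbf{Second gap: the uniform energy bound.} The bound $E(u(t))\le Ce^{-\beta t}E(u_0)+C(R)$ is indispensable, since the conclusion itself forces $\|u(t)\|_{H^1}$ to become bounded independently of $u_0$. It does not follow from Proposition~\ref{prop_stability} by comparing forced and unforced trajectories on unit intervals.
\begin{itemize}
\item The Lipschitz constant of the $H^1$ flow with respect to $f$ depends on $\|u(n)\|_{H^1}$. The only available control is of Gronwall type, growing like $\exp(C\sup_t\|u(t)\|_{L^\infty}^{p-1})$.
\item So the perturbation error is not small compared with $E(u(n))$, and for large energies it cannot be absorbed into $Ce^{-\beta m}E(u(n))$.
\item Integrating $\re\int f\overline{u_t}$ by parts in time would also need $\partial_t f$, which Haar-type forces lack. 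Substituting $u_t$ from the equation instead shows the forcing changes $E$ at rate $\lesssim\|f\|_{H^1}(1+E)^{p/(p+1)}$. That is sublinear, but by itself it gives no decay.
\end{itemize}
What is missing is a dissipation (observability) estimate for the \emph{forced} equation that is uniform in the energy. One way is to rerun the Carleman argument behind Proposition~\ref{prop_stability} with a source term. It is not a perturbative corollary of the unforced decay.

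\textbf{Two smaller points.}
\begin{itemize}
\item With the paper's definition \eqref{resonantfrequency}, one has exactly $\mathcal{N}_R(u,\dots,u)=\frac{p+1}{4\pi}\|u\|_{L^{p-1}}^{p-1}u$, so the gauge removes all of $\mathcal{N}_R$. The overcounted multiply-resonant configurations already sit inside $\mathcal{N}_{NR}$ and are covered by Lemma~\ref{lem_nonlinearsmoothing}. Since $\mathcal{N}_{NR}$ commutes with multiplication by a spatially constant phase, no phase bookkeeping is needed.
\item Your ``main obstacle'' is not one. For $j<T_0$ you only need $\|g_j\|_{H^1}\le C\sup_t\|u(t)\|_{H^1}\le C(1+E(u_0))^{1/2}$, not $H^{1+\sigma}$ bounds. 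So no growth condition on $M$ is required, and your interpolation between the trivial bound and the decaying bound closes directly.
\end{itemize}
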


We clarify that any bounded attractor $Y\subset H^{1+\sigma}$ with $\sigma>0$ arbitrarily small suffices for the verification of hypothesis $(\mathbf{H1})$ as well as $(\mathbf{H2})$ below. Nevertheless, in order to rigorously derive $(\mathbf{H4})$, we need extra regularity $H^3$ for $Y$ (namely $\sigma=2$) to justify some computations.

\subsubsection{Asymptotic compactness of linearization}\label{subsubsec_H2}

We turn to the proof of Proposition~\ref{prop_H2}. Inspired by \cite{CXZZ-25}, we apply resonance decomposition to the potential terms in \eqref{D_uS}, namely
\[\tfrac{p+1}{2}|u|^{p-1}v+\tfrac{p-1}{2}|u|^{p-3} u^2\bar v=\tfrac{p+1}{2}\mathcal{N}(u,\dots ,u,v)+\tfrac{p-1}{2}\mathcal{N}(u,\dots,u,v,u).\]
Note that the corresponding single resonance parts are
\[\tfrac{p+1}{2}\mathcal{N}_R(u,\dots,u,v)=\tfrac{p+1}{4\pi} \|u\|_{L^{p-1}(\mathbb{T})}^{p-1} v+\tfrac{p^2-1}{8\pi}u\int_{\mathbb{T}}|u|^{p-3}\bar{u}v,\]
\[\tfrac{p-1}{2}\mathcal{N}_R(u,\dots ,u,v,u)=\tfrac{p^2-1}{8\pi}u\int_{\mathbb{T}}|u|^{p-3}u\bar{v}.\]
If we denote with
\[\NR(u,v)=\tfrac{p+1}{2}\mathcal{N}_{NR}(u,\dots ,u,v)+\tfrac{p-1}{2}\mathcal{N}_{NR}(u,\dots ,u,v,u).\]
Then the potential terms can be written as
\[\tfrac{p+1}{2}|u|^{p-1}v+\tfrac{p-1}{2}|u|^{p-3} u^2\bar v=\tfrac{p+1}{4\pi} \|u\|_{L^{p-1}(\mathbb{T})}^{p-1} v+\tfrac{p^2-1}{4\pi} u\re\, (|u|^{p-3}u,v)_{L^2(\mathbb{T})}+\NR(u,v).\]
% For the third term, for $b>1/2$ sufficiently close to $1/2$, the multilinear estimate \eqref{lem_nonlinearsmoothing} yields
% \begin{equation}\label{NRestimate}
%     \|\NR(u,v)\|_{X_1^{s+\sigma,b-1}}\le C\|u\|^{p-1}_{X_1^{s,b}} \|v\|_{X_1^{s,b}}.
% \end{equation}
% The second terms also belongs to $H^{5/4}$ owing the extra regularity that $u(t)\in X_1^{s+\sigma,b}\subset C(0,1;H^{5/4}(\mathbb{T}))$ (note that $\re\, (|u|^{p-3}u,v)_{L^2(\mathbb{T})}$ is merely a function of $t$ and does not depend on $x$). In addition, the first term can be removed by phase shift as in last subsection.

\begin{proof}[\rm \bf Proof of Proposition~\ref{prop_H2}]
    Let us define the auxiliary functions
\[V(t,x):=e^{i\theta_u (t)} v(t,x)\quad \text{and}\quad U(t,x):=e^{i\theta_u(t)} u(t,x).\]
Then $V(0,x)=v_0(x)$, and $V(t)$ satisfies the equation
\[iV_t+V_{xx}+ia(x)V=\tfrac{p^2-1}{4\pi} U \re\, (|U|^{p-3}U,V)_{L^2(\mathbb{T})}+\NR(U,V).\]
Without loss of generality, we may assume that the parameter $b>1/2$ is sufficiently close to $1/2$, so that Lemma~\ref{lem_nonlinearsmoothing} is valid for $b'=b-1$, which yields
    \begin{equation}\label{NRestimate}
    \|\NR(U,V)\|_{X_1^{5/4,b-1}}\le C\|U\|^{p-1}_{X_1^{1,b}} \|V\|_{X_1^{1,b}}.
\end{equation}
In view of Lemma~\ref{Lemma-multiplication}, we have
\begin{equation}\label{Uestimate}
    \|U\|_{X_1^{5/4,b}}\le C\|e^{i\theta_u(t)}\|_{H^1(0,1)}\|u\|_{X_1^{5/4,b}}\le C,\end{equation}
where we use the assumption $\|u\|_{X_1^{5/4,b}}\le R$ and that
\[\|e^{i\theta_u(t)}\|_{H^1(0,1)}^2= 1+\|\theta_u(t)\|_{L^2(0,1)}^2\le 1+C\|u\|_{L^\infty(0,1;L^\infty(\mathbb{T}))}^{2(p-1)}\le 1+C\|u\|_{X_1^{5/4,b}}^{2(p-1)}\le C.\]
And similarly, taking Lemma~\ref{lem_wplinearization}(a) into account, we have
\begin{equation}\label{Vestimate}
    \|V\|_{X_1^{1,b}}\le C\|e^{i\theta_u(t)}\|_{H^1(0,1)}\|v\|_{X_1^{1,b}}\le C\|v_0\|_{H^1}.
\end{equation}

By virtue of Duhamel formula and Lemma~\ref{lemBourgain_S(t)estimate}, we find
\begin{equation}\label{duhamelestimate-1}
    \|V(1)-S_a(1) v_0\|_{H^{5/4}}\le C\left[ \|U\re\, (|U|^{p-3}U,V)_{L^2(\mathbb{T})}\|_{X_1^{5/4,b-1}}+\|\NR(U,V)\|_{X_1^{5/4,b-1}}\right].
\end{equation}
To treat the first term, notice that $L^\infty(0,1;H^{5/4})\hookrightarrow X_1^{5/4,b-1}$, and $\re\, (|U|^{p-3}U,V)_{L^2(\mathbb{T})}$ only depends on $t$. Hence (note that this is the place we essentially need extra regularity $u\in X^{5/4,b}_1$)
\begin{equation}\label{duhamelestimate-2}
\begin{aligned}
    \|U\re\, (|U|^{p-3}U,V)_{L^2(\mathbb{T})}\|_{X_1^{5/4,b-1}}&\le C\|U\|_{L^\infty(0,1;H^{5/4})} \|\re\, (|U|^{p-3}U,V)_{L^2(\mathbb{T})}\|_{L^\infty (0,1)}\\
    &\le C\|U\|_{X_1^{5/4,b}} \|U\|_{L^\infty(0,1;L^\infty(\mathbb{T}))}^{p-2} \|V\|_{L^\infty(0,1;L^\infty(\mathbb{T}))}\\
    &\le C\|U\|_{X_1^{5/4,b}}^{p-1} \|V\|_{X_1^{1,b}}\le C\|v_0\|_{H^1}.
\end{aligned}
\end{equation}
Meanwhile, the other term can be estimated by \eqref{NRestimate}--\eqref{Vestimate}:
\begin{equation}\label{duhamelestimate-3}
    \|\NR(U,V)\|_{X_1^{5/4,b-1}}
    \le C\|U\|^{p-1}_{X_1^{1,b}} \|V\|_{X_1^{1,b}}\le C\|v_0\|_{H^1}.
\end{equation}
Now the conclusion follows by substituting \eqref{duhamelestimate-2} and \eqref{duhamelestimate-3} into \eqref{duhamelestimate-1}. We emphasize that the constant $C$ is determined by $R$, rather than the profile of $u$.
\end{proof}

\section{Geometric control approach}\label{sec_geometriccontrol}

The hypothesis $(\mathbf{H4})$ is associated with an approximate control problem. Let $B\subset \mathbb{Z}$ be a finite subset, and $\mathcal{H}:={\rm span}_\mathbb{C}\, \{e_k:k\in B\}$, which is a subspace of $H^1(\mathbb{T})$. The main result of this section is stated as follows. Recall that $S\colon H^1\times L^2(0,1;H^1)\to H^1$ stands for the time-$1$ solution map of the NLS \eqref{deterministicNLS}, and $D_fS$ is equal to the solution map of linearized system \eqref{D_etaS}.

\begin{proposition}\label{prop_H4}
    Assume $u_0\in H^3(\mathbb{T})$, the external force $f\colon [0,1]\to \mathcal{H}$ is Lipschitz-observable (see Definition~{\rm\ref{def_obs}}), and the finite set $B\subset \mathbb{Z}$ is saturating (see Definition~{\rm\ref{def_saturating}}). Then 
    \[D_f S(u_0,f)|_{L^2(0,1;\mathcal{H})}\colon L^2(0,1;\mathcal{H})\to H^1(\mathbb{T})\]
    has dense image. More precisely, for any $v_1\in H^1(\mathbb{T})$ and $\varepsilon>0$, there exists a family
    \[g_k\in L^2(0,1;\mathbb{C}),\quad k\in B,\]
    so that the solution of controlled linearized system
    \begin{equation}\label{controlsystem}
        \left\{\begin{array}{ll}
    iv_t+v_{xx}+ia(x) v=\frac{p+1}{2} |u|^{p-1} v+\frac{p-1}{2} |u|^{p-3} u^2 \bar{v}+\sum_{k\in B} g_k(t) e_k(x),\\
    v(0,\cdot)=0,
    \end{array}\right.
    \end{equation}
    satisfies $\|v(1)-v_1\|_{H^1}\le \varepsilon$, where $u$ stands for the solution of NLS equation \eqref{deterministicNLS} with force $f$.
\end{proposition}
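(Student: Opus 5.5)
We argue by duality, in the spirit of \cite{KNS-20}. Since $D_fS(u_0,f)$ is real-linear, density of its image in $H^1(\mathbb{T})$ is equivalent to triviality of the annihilator. Suppose $\phi_1\in H^{-1}(\mathbb{T})$ (the dual of $H^1$ with respect to $\re\,(\cdot,\cdot)_{L^2}$) annihilates the image. Let $\phi(t)=\mathcal{R}_u(1,t)^*\phi_1$ be the solution of the backward adjoint linearized equation
\[
i\phi_t+\phi_{xx}-ia(x)\phi=\tfrac{p+1}{2}|u|^{p-1}\phi+\tfrac{p-1}{2}|u|^{p-3}u^2\bar\phi,\qquad \phi(1)=\phi_1 .
\]
By \eqref{ADuhamel}, the orthogonality condition reads $\int_0^1\re\,(g(t),\phi(t))\,dt=0$ for all $g\in L^2(0,1;\mathcal{H})$. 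This gives $(\phi(t),e_k)_{L^2}=0$ for a.e.\ $t\in[0,1]$ and every $k\in B$, with both real and imaginary parts vanishing, because the admissible controls $g_k$ are complex-valued. Our goal is to show that these identities force $\phi\equiv0$, hence $\phi_1=0$.

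The core step is an inductive propagation of vanishing Fourier modes, following the saturation scheme $B_n=B_{n-1}\cup\{2k-l:\ k\in B_0,\ l\in B_{n-1}\}$. Assume $\hat\phi(t,l)\equiv0$ for all $l\in B_{n-1}$. Differentiate the identity $(\phi(t),e_l)=0$ in time and substitute the adjoint equation. The terms $-l^2\hat\phi(l)$ and the damping contribute nothing new in the leading order, so we obtain
\[
\big(a\phi,e_l\big)+\big(\tfrac{p+1}{2}|u|^{p-1}\phi+\tfrac{p-1}{2}|u|^{p-3}u^2\bar\phi,\ e_l\big)=0 .
\]
Here we use that $u_0\in H^3$ and $f$ is regular, so $u\in C([0,1];H^3)$ and all of these pairings are differentiable in $t$. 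Next write $u=w+\int_0^t f$, or more precisely, expand the dependence of $u$ on the forcing through the Lipschitz-observability (Definition~\ref{def_obs}) of the $\mathcal{H}$-valued force $f(t)=\sum_{k\in B}f_k(t)e_k$. The cubic-type coupling $u^2\bar\phi$ then contains terms $f_k(t)^2\,\overline{\hat\phi(2k-l)}$, and $|u|^2\phi$ contains $|f_k|^2\hat\phi(l)$. Lipschitz-observability says that if $\sum_j c_j(t)\,\mathcal{Y}_j(t)=c_0(t)$, where $\mathcal{Y}_j$ are primitives-of-$f$ monomials and the $c_j$ are Lipschitz, then all $c_j\equiv0$. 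Applying it, the coefficient of the highest monomial in the primitive of $f_k$ must vanish, which yields $\hat\phi(t,2k-l)\equiv0$. For general odd $p$ one picks out the term of top degree $p-1$ in the primitive of $f_k$, whose coefficient is a nonzero multiple of $\overline{\hat\phi(2k-l)}$ (together with $\hat\phi(l)$ terms that already vanish). Iterating the argument over $n$ gives $\hat\phi(t,m)\equiv0$ for all $m\in\bigcup_nB_n$, which equals $\mathbb{Z}$ when $B$ is saturating (Definition~\ref{def_saturating}). Thus $\phi\equiv0$, and the quantitative statement with $\varepsilon$ follows from the density.

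The main obstacle is making the observability step rigorous. The coefficients $c_j(t)$ involve $\phi$ paired with products of $u$ and $e_k$. Their Lipschitz regularity in $t$ requires $\phi\in C^1([0,1];H^{-3})$ tested against $H^3$ functions, which is exactly why $u_0\in H^3$ is needed. Because $\phi_1$ lies only in $H^{-1}$, I would first prove the result with $\phi_1$ smooth and then justify the general case by regularity of the backward flow: it maps $H^{-1}$ to $C([0,1];H^{-1})$, and $\partial_t\phi\in C([0,1];H^{-3})$. A second difficulty is organizing the decomposition $u=\tilde u+\sum_k F_k(t)e_k$, with $F_k$ a primitive-type quantity, so that $\tilde u$ is Lipschitz and independent of the oscillatory part. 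This is handled by a change of variables that subtracts the effect of the force on $u$, as in \cite{KNS-20}. After that one checks that the mixed terms have Lipschitz coefficients in the monomials, so that observability applies to them.
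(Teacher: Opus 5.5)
Your overall architecture matches the paper. That covers duality with the backward adjoint system, vanishing of the modes in $B$, the propagation $l\mapsto 2k-l$ along the sets $B_n$, and the role of $u_0\in H^3$. The sign of the $u^2\bar\phi$ term in your adjoint equation should be negative for the pairing $\re\,(\cdot,\cdot)_{L^2}$ (cf.\ \eqref{adjointsystem}). This is harmless, because your equation is solved by $i\phi$.

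The genuine gap is the step where Lipschitz-observability is invoked.
\begin{itemize}
\item After one time-differentiation of $(\phi(t),e_l)=0$ you obtain an identity like \eqref{AS-1}. In it $u$ occurs but $f$ does not, since $u$ contains the primitive $\int_0^t f_k$, not $f_k(t)$. So $u^2\bar\phi$ does not produce terms $f_k(t)^2\overline{\hat\phi(2k-l)}$, and Definition~\ref{def_obs} says nothing about this identity.
\item Your reformulation of observability is false. You claim that $\sum_j c_j\mathcal{Y}_j=c_0$, with $\mathcal{Y}_j$ monomials in primitives of $f$ and $c_j$ Lipschitz, forces $c_j\equiv0$. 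But the $\mathcal{Y}_j$ are continuous, so $c_1\equiv1$, $c_0:=\mathcal{Y}_1$ is a counterexample.
\item Observability only applies to relations that are linear in $\re f_k(t)$ and $\im f_k(t)$ themselves, with Lipschitz coefficients and a continuous right-hand side.
\end{itemize}
Hence your one-shot extraction of ``the coefficient of the top-degree monomial'' is unjustified. Moreover, the coefficients depend on $f$ through $\tilde u$ and $\phi$, so there is no polynomial in $\int_0^t f$ with independent coefficients to read off. Likewise, the pairings are not differentiable in $t$ in the classical sense; the useful fact is precisely that their derivative contains $f$.

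The missing idea is the paper's iterated differentiation scheme.
\begin{itemize}
\item Differentiate \eqref{AS-1} once more, writing $u=y-i\int_0^t f$ with $y\in C^1([0,1];H^1)$. This is where $u_0\in H^3$ is really needed. A merely Lipschitz $\tilde u$ would not suffice, because $y_t$ ends up in the right-hand side, which must be continuous.
\item The chain rule produces $i(D^2\mathcal{N}_u(u_t,i\phi),e_l)$. The $f$-part of $u_t$ then gives $\sum_{k\in B}a_k\re f_k+b_k\im f_k=c$, with $a_k=i(D^2\mathcal{N}_u(ie_k,i\phi),e_l)$ and $b_k$ Lipschitz and $c$ continuous. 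Observability yields $a_k\equiv b_k\equiv c\equiv0$.
\item Differentiating $a_k\equiv0$ and applying observability again gives $(D^3\mathcal{N}_u(ie_{k_1},ie_{k_2},i\phi),e_l)\equiv0$.
\item After $p-1$ rounds you reach the $u$-independent identity $(D^p\mathcal{N}(ie_k,\dots,ie_k,i\phi),e_l)=0$. This is proportional to $\tfrac{p+1}{2}(\phi,e_l)+\tfrac{p-1}{2}(e^{2ikx}\bar\phi,e_l)$. Since $(\phi,e_l)=0$, it yields $(\phi,e_{2k-l})=0$.
\end{itemize}
In your polynomial language, each round kills only the coefficient of $f$ and lowers the degree by one. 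You must iterate this $p-1$ times rather than read off the top coefficient at once.
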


To this end, we first review the notion of Lipschitz-observability from \cite{KNS-20} in Section~\ref{subsec_Lipobservable}, and then employ the geometric control approach \cite{AS-06,Sar-12} in Section~\ref{subsec_degeneratecontrol}. Note that the extra regularity of reference solution $u_0\in H^3$ is crucial to the proof of this $H^1$ controllability result.

\subsection{Lipschitz observability}\label{subsec_Lipobservable}

Recall the following definition from \cite{KNS-20}.

\begin{definition}[Lipschitz-observability]\label{def_obs}
    Let $B\subset \mathbb{Z}$ be a finite set, and $\mathcal{H}={\rm span}_{\mathbb{C}} \{e_k:k\in B\}$. A function $f\in L^2(0,1;\mathcal{H})$ of the form\footnote{Indeed, $f_k$ is the $k$-th Fourier coefficient of $f$. For the sake of simplicity, we write $f_k(t)$ instead of $\widehat{f(t)}(k)$.}
    \[f(t,x)=\sum_{k\in B} f_k(t) e_k(x)\]
    is called Lipschitz-observable, if for any Lipschitz functions $a_k,b_k\colon [0,1]\to \mathbb{C}$ and continuous function $c\colon [0,1]\to \mathbb{C}$,
    the equality
    \begin{equation}\label{observable-1}
        \sum_{k\in B} a_k(t) \re f_k(t)+b_k \im f_k(t)=c(t)\quad \text{in }L^1(0,1)
    \end{equation}
    implies that $a_k(t)\equiv b_k(t)\equiv c(t)\equiv 0$ for any $k\in B$.
\end{definition}

\begin{remark}
    When equipping $\mathcal{H}$ with the real $L^2$-inner product $(f,g):=\re \, (f,g)_{L^2(\mathbb{T})}$, and considering the orthonormal basis $\{e_k,ie_k\}\, (k\in B)$ over $\mathbb{R}$, the corresponding coefficients in the expansion of $f$ are $\re f_k$ and $\im f_k$, respectively. Thus our formulation of Lipschitz-observability is compatible with {\rm\cite[Definition~4.1]{KNS-20}}.
\end{remark}

Heuristically, functions with suitable ``roughness" are promising to be observable. For example, the denseness of discontinuous points may lead to observability.

\begin{example}\label{eg_densediscontinuous}
    Denote the set of discontinuous points of $\re f_k$ and $\im f_k$ by $P_k$ and $Q_k$, respectively. Assume these $P_k$ and $Q_k\, (k\in B)$ are all dense in $[0,1]$, and mutually disjoint. Then for \eqref{observable-1} to hold, $a_k(t)\re f_k(t)$ must be continuous at any $t\in P_k$, which forces $a_k(t)=0$ on this dense set. As $a_k$ is continuous, we find that $a_k\equiv 0$. Similarly, we have $b_k\equiv c\equiv 0.$
\end{example}

\medskip

For less trivial examples, let us recall the notion of observable measure from \cite{KNS-20}.

\begin{definition}[Observable measure]\label{def_obsmeas}
    Let $B\subset \mathbb{Z}$ be a finite set, and $\mathcal{H}={\rm span}_{\mathbb{C}} \{e_k:k\in B\}$. A probability measure $\ell \in \mathcal{P}(L^2(0,1;\mathcal{H}))$ is called Lipschitz-observable, if $\ell$-almost every trajectory $\eta\in L^2(0,1;\mathcal{H})$ is Lipschitz-observable. In addition, we say an $L^2(0,1;\mathcal{H})$-valued random variable is Lipschitz-observable, if and only if its law is a Lipschitz-observable measure.
\end{definition}

We provide two examples on the observability of complex Haar processes.

\begin{definition}[Haar process]
    The $L^\infty$-normalized Haar system $\{h_0,h_{jl}\}\, (j\in \mathbb{N},\ l\in \mathbb{N}_0)$ is defined by \begin{equation}\label{def_Haar}
    h_0(t)=\begin{cases}
        1,&0\le t<1,\\
        0,&\text{elsewhere.}
    \end{cases},\quad 
    h_{jl}(t)=\begin{cases}
        1,&l2^{-j}\le t<(l+1/2)2^{-j},\\
        -1,&(l+1/2)2^{-j}\le t<(l+1)2^{-j},\\
        0,&\text{elsewhere.}
    \end{cases}
\end{equation}
Let $B\subset \mathbb{Z}$ be a finite set, and $\mathcal{H}={\rm span}_{\mathbb{C}} \{e_k:k\in B\}$. A complex Haar process is an $L^2(0,1;\mathcal{H})$-valued random variable, which can be formulated as
    \begin{equation}\label{Haardef}
        \eta(t)=\sum_{k\in B} b_k \left((\xi_{k0}^1+i\xi_{k0}^2) h_0(t)+\sum_{j=1}^\infty  \sum_{l=0}^{2^j-1} c_j (\xi_{kjl}^1+i\xi_{kjl}^2) h_{jl}(t)\right)e_k(x).
    \end{equation}
    Here $b_k$ and $c_j$ are non-zero constants with $\sum_{j=1}^\infty |c_j|^2<\infty$, and $\xi_{k0}^1, \xi_{k0}^2, \xi_{kjl}^1, \xi_{kjl}^2$ are real-valued i.i.d.~random variables with continuous density supported in $[-1,1]$.
\end{definition}

Note that $\{h_0,h_{jl}\}\, (j\in \mathbb{N},\ 0\le l\le 2^j-1)$ is an orthonormal basis of $L^2(0,1;\mathbb{R})$. The reader is referred to \cite[Section 5.2]{KNS-20} for the proof of following two examples. Clearly the latter is associated with the Main Theorem (see the noise structure $(\mathbf{S2})$).

\begin{example}[Haar process with exponential decay coefficients]
    If $c_j=A^{-j}$ with $A>1$ and sufficiently close to $1$, then the Haar process is Lipschitz-observable.
\end{example}

\begin{example}[Haar process with polynomial decay coefficients]\label{eg_haar}
    If $c_j=cj^{-q}$ with $c>0$ and $q>1$ arbitrarily given, then the Haar process is Lipschitz-observable.
\end{example}

\subsection{Approximate controllability}\label{subsec_degeneratecontrol}

We present the proof of Proposition~\ref{prop_H4} in this subsection. To this end, let us first give the definition of saturating subspaces.

\begin{definition}[Saturating sets]\label{def_saturating}
    Given a subset $B\subset \mathbb{Z}$, set $B_0=B$ and define iteratively that
    \[B_n=B_{n-1}\cup \{2k-l:k\in B_0,\ l\in B_{n-1}\}.\]
    Then $B$ is called saturating if and only if $\bigcup_{n=0}^\infty B_n=\mathbb{Z}$.
\end{definition}

Note that we only allow $k\in B$, while $l$ may range over the former extensions $B_{n-1}$. We provide an easy but important example: a saturating set containing two elements.

\begin{example}[Two elements of saturation]
    For any $n\in \mathbb{Z}$, the set $B:=\{n,n+1\}$ is saturating. Indeed, one can prove by induction that
    \[B_k\supset [n-k,n+k+1]\cap \mathbb{Z}.\]
    This is due to the observation that
    \[2 n-(n+k+1)=n-k-1\quad \text{and}\quad 2(n+1)-(n-k)=n+k+2.\]
    In particular, a simplest choice is $B=\{0,1\}$, which corresponds to Fourier modes $1$ and $e^{ix}$.
\end{example}

Now we turn to the proof of Proposition~\ref{prop_H4}. Let $A:=D_f S(u,f)$. It suffices to show
\[AP_{L^2(0,1;\mathcal{H})}\colon L^2(0,1;H^1)\to H^1\]
has dense image, where $P_{L^2(0,1;\mathcal{H})}\colon L^2(0,1;H^1)\to L^2(0,1;\mathcal{H})$ denotes the orthogonal projection. Equivalently, this means its (Banach) adjoint
\[P_{L^2(0,1;\mathcal{H})} A^*\colon H^{-1}\to L^2(0,1;H^{-1})\]
has trivial kernel. Here we slightly abuse the notation by still writing $P_{L^2(0,1;\mathcal{H})}$ for the orthonormal projection from $L^2(0,1;H^{-1})$ to $L^2(0,1;\mathcal{H})$. In view of \eqref{ADuhamel}, we have
\[P_{L^2(0,1;\mathcal{H})} A^*=\int_0^1 P_{\mathcal{H}} \mathcal{R}_u(1,t)^*\, dt,\]
where $P_{\mathcal{H}}\colon H^{-1}\to \mathcal{H}$ represents the orthonormal projection.

We shall demonstrate that $\mathcal{R}_u(1,t)^*$ can be identified with the solution of backward system
\begin{equation}\label{adjointsystem}
\left\{\begin{array}{ll}
i\varphi_t+\varphi_{xx}-ia(x)\varphi=\frac{p+1}{2}|u|^{p-1}\varphi-\frac{p-1}{2}|u|^{p-3} u^2\bar \varphi,\\
\varphi(1,x)=\varphi_1(x)\in H^{-1}.
\end{array}
\right.
\end{equation}
See Lemma~\ref{lem_wplinearization}(b) for the well-posedness of this backward system. In the sequel, we denote the corresponding solution map from time $1$ to $t\in [0,1]$ by $\mathcal{R}^{bw}_u (t,1) \varphi_1(x)$.

\begin{lemma}\label{lem_dual}
    For any $v_0\in H^1$ and $\varphi_1\in H^{-1}$, set $v(t)=\mathcal{R}_u(t,0)v_0$ and $\varphi(t)=\mathcal{R}^{bw}_u (t,1) \varphi_1$. Then the $(H^1,H^{-1})$ pairing\footnote{In order to associate this pairing with $L^2$-inner product, we set $\langle f,g\rangle_{H^1,H^{-1}}=\int_{\mathbb{T}} f\bar{g}$ for smooth $f,g$ and extend over $f\in H^1$ and $g\in H^{-1}$ by continuity. This yields a real-bilinear functional after taking the real part.} between $v(t)$ and $\varphi(t)$, namely
    \[\re\, \langle v(t),\varphi(t)\rangle_{H^1,H^{-1}},\]
    is a constant for $t\in [0,1]$.
\end{lemma}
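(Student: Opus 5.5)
The plan is to first verify the conservation law for smooth data and then pass to general data by density and continuity. Fix $t\in[0,1]$ and assume first that $v_0\in H^3$ and $\varphi_1\in H^3$. The reference trajectory $u$ is at least in $C(0,1;H^1)\subset C(0,1;L^\infty)$, and smooth data give $v,\varphi\in C^1(0,1;H^1)\cap C(0,1;H^3)$; if the available regularity of $u$ is too weak for this, I would additionally approximate $u$ by smooth functions $u^\varepsilon$, since the potentials depend continuously on $u$ in $L^\infty$. For such smooth solutions the function $t\mapsto \re\int_{\mathbb{T}} v\bar\varphi$ is $C^1$, and I will show that its derivative vanishes.

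For the computation, write $i v_t=-v_{xx}-iav+\alpha v+\beta\bar v$ and $i\varphi_t=-\varphi_{xx}+ia\varphi+\alpha\varphi-\beta\bar\varphi$, where $\alpha=\frac{p+1}{2}|u|^{p-1}$ is real and $\beta=\frac{p-1}{2}|u|^{p-3}u^2$ is complex. Then
\[
\frac{d}{dt}\re\int v\bar\varphi=\re\int\bigl(v_t\bar\varphi+v\overline{\varphi_t}\bigr)=\im\int\bigl(iv_t\,\bar\varphi-v\,\overline{i\varphi_t}\bigr).
\]
I will check the terms one at a time. The Laplacian terms give $\im\int(-v_{xx}\bar\varphi+v\bar\varphi_{xx})=0$ after integrating by parts. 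The damping terms give $\im\int(-iav\bar\varphi-v\,\overline{ia\varphi})=\im\int(-iav\bar\varphi+iav\bar\varphi)=0$; this is exactly why the sign of the damping is flipped in \eqref{adjointsystem}. The $\alpha$ terms give $\im\int(\alpha v\bar\varphi-\alpha v\bar\varphi)=0$. The $\beta$ terms give $\im\int(\beta\bar v\bar\varphi+v\bar\beta\varphi)=\im\bigl(w+\bar w\bigr)=0$ with $w=\int\beta\bar v\bar\varphi$; this is why the $\bar\varphi$-term carries a minus sign in \eqref{adjointsystem}. Hence the derivative vanishes identically.

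For general data, take $v_0^n\to v_0$ in $H^1$ and $\varphi_1^n\to\varphi_1$ in $H^{-1}$ with smooth approximants. By the well-posedness in Lemma~\ref{lem_wplinearization}, $\mathcal{R}_u(t,0)$ is bounded on $H^1$ and $\mathcal{R}^{bw}_u(t,1)$ is bounded on $H^{-1}$, uniformly in $t$. Consequently the pairings converge for every $t$, and the constancy passes to the limit.

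The main obstacle is the justification step: ensuring that the smooth solutions are regular enough for the differentiation, and that the backward solution map is bounded on $H^{-1}$, with continuous dependence on the potential if $u$ has to be mollified. Both points reduce to the linear well-posedness already stated in Lemma~\ref{lem_wplinearization}.
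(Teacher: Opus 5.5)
Your proposal is correct and follows the paper's proof: reduce to regular data by density, differentiate $\re\int_{\mathbb{T}} v\bar\varphi$, and check that the Laplacian, damping, $|u|^{p-1}$ and $|u|^{p-3}u^2\bar v$ contributions cancel separately, exactly as you do. The only difference is that the paper regularizes just to $v_0,\varphi_1\in H^1$, so that $v_t,\varphi_t\in C(0,1;H^{-1})$ and the product rule holds for the $H^1$--$H^{-1}$ pairing; this avoids both your $H^3$ setup and the optional mollification of $u$.
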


\begin{proof}[\rm \bf Proof of Lemma~\ref{lem_dual}]
    Owing to a standard approximation argument, without loss of generality, we may assume $v_0,\varphi_1\in H^1$, and hence $v,\varphi\in C(0,1;H^1)$ and $v_t,\varphi_t\in C(0,1;H^{-1})$. As a result, the pairing reduces to $L^2$-inner product, and exchanging spatial-integrations with time-derivatives is admissible. Taking the equations of $\varphi$ and $v$ into account, we find
    \begin{align*}
        \frac{d}{dt}\re\, (v(t),\varphi(t))_{L^2}&=\re\, (v_t,\varphi)_{L^2}+\re\,(v,\varphi_t)_{L^2}\\
        &=\re\, (iv_{xx}-a(x)v-i\tfrac{p+1}{2}|u|^{p-1} v-i\tfrac{p-1}{2}|u|^{p-3} u^2 \bar{v},\varphi)_{L^2}\\
        &\quad +\re\, (v,i\varphi_{xx}+a(x)\varphi-i\tfrac{p+1}{2}|u|^{p-1} \varphi+i\tfrac{p-1}{2} |u|^{p-3} u^2 \bar{\varphi})_{L^2}
    \end{align*}
    Note that
    \[(iv_{xx},\varphi)_{L^2}=i\int_{\mathbb{T}} v_{xx}\bar{\varphi}=i\int_{\mathbb{T}} v\overline{\varphi_{xx}}=-(v,i\varphi_{xx})_{L^2},\]
    \[(-a(x)v,\varphi)_{L^2}=-\int_{\mathbb{T}} a(x)v\bar{\varphi}=-(v,a(x)\varphi)_{L^2},\]
    \[(-i|u|^{p-1} v,\varphi)_{L^2}=-i\int_{\mathbb{T}} |u|^{p-1} v\bar{\varphi}=(v,i|u|^{p-1} \varphi)_{L^2}.\]
    Thus we find
    \begin{align*}
        \frac{d}{dt}\re\, (v(t),\varphi(t))_{L^2}&=\re\, (-i\tfrac{p-1}{2}|u|^{p-3} u^2 \bar{v},\varphi)_{L^2}+\re\, (v,i\tfrac{p-1}{2} |u|^{p-3} u^2 \bar{\varphi})_{L^2}\\
        &=\frac{p-1}{2}\int_{\mathbb{T}} |u|^{p-3}[\re\, (-i u^2\bar{v}\bar{\varphi})+\re (-i \bar{u}^2 v\varphi)]=0.
    \end{align*}
    Here in the last step we use that the complex conjugate of $-i u^2 \bar{v}\bar{\varphi}$ is $i\bar{u}^2 v \varphi$.
\end{proof}

We can interpret this lemma as for any $v_0\in H^1$ and $\varphi_1\in H^{-1}$,
\begin{equation}\label{dualidentity}
    \re\, \langle \mathcal{R}_u(1,t)v_0,\varphi_1(x)\rangle_{H^1,H^{-1}}=\re\, \langle v_0,\mathcal{R}_u^{bw}(1,t)\varphi_1\rangle_{H^1,H^{-1}}.
\end{equation}
Recall our convention on scaler fields: $H^s$ is equipped with the real inner-product $\re\, (\cdot,\cdot)_{H^s}$. Thus $H^{-1}$ can be identified with the Banach dual of $H^1$ via the map
\[H^{-1}\ni \varphi\mapsto \re\, \langle \cdot,\varphi \rangle_{H^1,H^{-1}}.\]
Therefore the identity \eqref{dualidentity} implies
\[\mathcal{R}_u(1,t)^*=\mathcal{R}^{bw}_u(t,1).\]
And as a corollary, we obtain the following:

\begin{corollary}
    The adjoint of operator $AP_{L^2(0,1;\mathcal{H})}$ has the form of
    \[P_{L^2(0,1;\mathcal{H})}A^*=\int_0^1 P_{\mathcal{H}} \mathcal{R}_u^{bw}(1,t)\, dt.\]
\end{corollary}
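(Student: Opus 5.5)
The plan is to compute the adjoint by testing against an arbitrary $\varphi_1\in H^{-1}$, with the duality between $L^2(0,1;H^1)$ and $L^2(0,1;H^{-1})$ given by $\int_0^1 \re\,\langle g(t),h(t)\rangle_{H^1,H^{-1}}\,dt$. I will read the right-hand side of the corollary as the function $t\mapsto P_{\mathcal{H}}\mathcal{R}_u^{bw}(t,1)\varphi_1$, which lies in $L^2(0,1;\mathcal{H})$. This is what the identification $\mathcal{R}_u(1,t)^*=\mathcal{R}^{bw}_u(t,1)$ produces, and the ``$dt$'' in the formula just records that the adjoint of $g\mapsto\int_0^1(\cdot)\,dt$ is the function of $t$ itself.

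\textbf{Step 1: reduce to the duality identity at each time.} Fix $g\in L^2(0,1;H^1)$ and $\varphi_1\in H^{-1}$. By \eqref{ADuhamel},
\[
\re\,\langle AP_{L^2(0,1;\mathcal{H})}g,\varphi_1\rangle_{H^1,H^{-1}}=\int_0^1 \re\,\langle \mathcal{R}_u(1,t)P_{\mathcal{H}}g(t),\varphi_1\rangle_{H^1,H^{-1}}\,dt .
\]
Exchanging the pairing with the Bochner integral is allowed because $\varphi_1$ is a bounded functional, and $t\mapsto \mathcal{R}_u(1,t)P_{\mathcal{H}}g(t)$ is Bochner integrable. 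The latter uses the uniform bound $\|\mathcal{R}_u(1,t)\|_{\mathcal{L}(H^1)}\le C$ from Lemma~\ref{lem_wplinearization}(a) and strong continuity of $\mathcal{R}_u(1,t)$ in $t$.

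\textbf{Step 2: apply Lemma~\ref{lem_dual} on $[t,1]$.} I apply Lemma~\ref{lem_dual} on the interval $[t,1]$ instead of $[0,1]$; its proof is time-translation invariant. The initial datum is $P_{\mathcal{H}}g(t)$ at time $t$, and the terminal datum is $\varphi_1$ at time $1$. Evaluating the conserved pairing at the two endpoints gives
\[
\re\,\langle \mathcal{R}_u(1,t)P_{\mathcal{H}}g(t),\varphi_1\rangle=\re\,\langle P_{\mathcal{H}}g(t),\mathcal{R}^{bw}_u(t,1)\varphi_1\rangle .
\]

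\textbf{Step 3: move the projection across.} $\mathcal{H}$ is finite-dimensional and spanned by the smooth modes $e_k$. Hence $P_{\mathcal{H}}$ is the Fourier truncation to $B$, on $H^1$ and on $H^{-1}$ alike, and it is symmetric for the real pairing $\re\,\langle\cdot,\cdot\rangle_{H^1,H^{-1}}$: both sides equal $\re\sum_{k\in B}\hat f(k)\overline{\hat h(k)}$. Therefore
\[
\re\,\langle AP_{L^2(0,1;\mathcal{H})}g,\varphi_1\rangle=\int_0^1\re\,\langle g(t),P_{\mathcal{H}}\mathcal{R}^{bw}_u(t,1)\varphi_1\rangle\,dt .
\]
This identifies $P_{L^2(0,1;\mathcal{H})}A^*\varphi_1$ with $t\mapsto P_{\mathcal{H}}\mathcal{R}^{bw}_u(t,1)\varphi_1$, which is the claim.

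\textbf{Main obstacle.} The only delicate point is justifying the formal steps when $\varphi_1$ is merely in $H^{-1}$. First, I need Lemma~\ref{lem_wplinearization}(b) to give $\mathcal{R}^{bw}_u(\cdot,1)\varphi_1\in C(0,1;H^{-1})$, so that the right-hand side is a genuine element of $L^2(0,1;\mathcal{H})$. Second, the identity in Step 2 must hold for all $t$, not just for smooth data. I would handle both by density: approximate $\varphi_1$ and $g$ by $H^1$-valued data, for which Lemma~\ref{lem_dual} applies directly, then pass to the limit using continuity of $\mathcal{R}_u$ on $H^1$ and of $\mathcal{R}^{bw}_u$ on $H^{-1}$, with dominated convergence in $t$.
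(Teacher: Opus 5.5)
Your proposal is correct and follows the paper's route. The paper reads Lemma~\ref{lem_dual}, applied on $[t,1]$, as the identity $\mathcal{R}_u(1,t)^*=\mathcal{R}^{bw}_u(t,1)$ and inserts it into the Duhamel representation \eqref{ADuhamel}, which is exactly your Steps 1--2. Your check that $P_{\mathcal{H}}$ is symmetric for the real pairing, and your reading of the right-hand side as the function $t\mapsto P_{\mathcal{H}}\mathcal{R}^{bw}_u(t,1)\varphi_1\in L^2(0,1;\mathcal{H})$ with the time arguments ordered consistently with $\mathcal{R}_u(1,t)^*=\mathcal{R}^{bw}_u(t,1)$, simply make precise what the paper's notation leaves implicit.
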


We are now in a position to accomplish Proposition~\ref{prop_H4}. To show that $AP_{L^2(0,1;\mathcal{H})}$ has dense image, a more convenient description is that the self-adjoint operator
\[G:=AP_{\mathcal{H}}(AP_{\mathcal{H}})^*=AP_{\mathcal{H}} A^*=\int_0^1 R_u(1,t)P_{\mathcal{H}} R_u^{bw}(1,t)\, dt\]
has trivial kernel. In control theory, $G$ is called the Gramian matrix (see, e.g., \cite{Coron-07}), while in the terminology of Malliavin calculus it is also known as the Malliavin matrix (see, e.g., \cite{HM-06}).

\begin{proof}[\rm \bf Proof of Proposition~\ref{prop_H4}]
As illustrated above, it suffices to show that $\varphi_1=0$ whenever $G(\varphi_1)=0$. Indeed, for any $\varphi_1\in \ker G$, we have
\[0=\re\,\langle G(\varphi_1),\varphi_1\rangle_{H^1,H^{-1}}=\int_0^1 \|P_{\mathcal{H}} \mathcal{R}_u^{bw}(1,t)\varphi_1\|_{L^2}^2\, dt\]
Thus $P_{\mathcal{H}} \mathcal{R}^{bw}_u(t,1) \varphi_1\equiv 0$. Alternatively, set $\varphi(t)=\mathcal{R}_u^{bw}(1,t) \varphi_1$, then for each $l\in B$, we have\footnote{We slightly abuse the notation that, by $(\varphi(t),e_l)_{L^2(\mathbb{T})}$ we actually mean the pairing between $H^{-1}$ and $H^1$. These two notions coincides when $\varphi\in L^2(\mathbb{T})$. As $e_l$ is smooth, when taking time derivative of \eqref{orthB_0} later, the expression $(\varphi_t,e_l)_{L^2(\mathbb{T})}$ is well-defined and belongs to $C(0,1;\mathbb{C})$ since $\varphi_t\in C(0,1;H^{-1})$.}
\begin{equation}\label{orthB_0}
    (\varphi(t),e_l)_{L^2}=0\quad \text{for any } t\in [0,1].
\end{equation}

\noindent{\bf Claim.} If \eqref{orthB_0} is valid with some $l\in \mathbb{Z}$, then for any $k\in B$, it also holds with $l$ replaced by $2k-l$.

Once the claim has been demonstrated, we can prove by induction that \eqref{orthB_0} holds for $l\in B_n$ for any $n\in \mathbb{N}_0$ (see Definition~\ref{def_saturating}). Thanks to our assumption that $B$ is saturating, we conclude with $\varphi(t)\equiv 0$, and in particular $\varphi_1=\varphi(1)=0$ as desired. It thus remains to fulfill our claim. 

\medskip

\noindent\textbf{Proof of the claim.} Recall we denote the $p$-multiplication operator by \eqref{p_product}. And when $u_1=\cdots =u_p=u(t)$, we simply write $\mathcal{N}(u(t))=|u(t)|^{p-1}u(t)$. The $k$-th differential map of $\mathcal{N}(u(t))$ is $D^k\mathcal{N}_{u(t)}\colon \mathbb{C}^k\to \mathbb{C}$, which is a symmetric real-linear form. It is readily seen that $D^k \mathcal{N}_{u(t)}(h_1,\dots ,h_k)$ is a $p$-homogeneous polynomial of $u$ and $h_1,\dots,h_k$ (as well as their complex conjugates). For example, the first and the $p$-th order differential maps are
    \[D\mathcal{N}_{u(t)} (h_1)=\tfrac{p+1}{2}|u|^{p-1} h_1+\tfrac{p-1}{2}|u|^{p-3} u^2 \bar{h}_1,\]
    \[D^p\mathcal{N}_{u(t)} (h_1,\dots ,h_p)=\sum_{\rm permutations} \mathcal{N}(h_{j_1},\dots ,h_{j_p}).\]
    Here the last summation is taken over all $p!$ permutations $(j_1,\dots ,j_p)$ of $(1,\dots ,p)$.
    Specifically, we find that the backward system \eqref{adjointsystem} of $\varphi$ can be rewritten as
    \begin{equation}\label{adjointsystem-variant}
        i\varphi_t+\varphi_{xx}-ia(x)\varphi=-iD\mathcal{N}_{u(t)} (i\varphi)
    \end{equation}
    
    Computing the time derivative of \eqref{orthB_0}, and taking \eqref{adjointsystem-variant} into account, we find
    \[(\varphi_{xx}-ia(x)\varphi+iD\mathcal{N}_{u(t)}(i\varphi),e_l)_{L^2}=0.\]
    And integration by parts yields
    \begin{equation}\label{AS-1}
        \big(\varphi,(-|l|^2+ia(x)) e_l\big)_{L^2}+i(D\mathcal{N}_{u(t)}(i\varphi),e_l)_{L^2}=0.
    \end{equation}
    We point out that the NLS equation \eqref{deterministicNLS} of $u$ leads to
    \[u(t)=u_0+\int_0^t (iu_{xx}-a(x) u-i|u|^2 u)\, ds-i\int_0^t f(s)\, ds.\]
    This can be written in the form
    \[u(t)=:y(t)-i\int_0^t f(s)\, ds=y(t)-i\sum_{k\in B}\int_0^t f_k(t)e_k(x)\, dt,\]
    where $y\in C^1(0,1;H^1)$ thanks to the assumption $u_0\in H^3$ so that $u\in C(0,1;H^3)$. Therefore, taking time derivative of \eqref{AS-1}, and applying the chain rule to the term $D\mathcal{N}_{u(t)}(i\varphi)$, we get
    \[\big(\varphi_t,(-|l|^2+ia(x))e_l\big)_{L^2}+i(D\mathcal{N}_{u(t)} (i\varphi_t),e_l)_{L^2}+i(D^2\mathcal{N}_{u(t)}(u_t,i\varphi),e_l)_{L^2}=0.\]
    Split $u_t$ by $y_t-i\sum_{k\in B} f_k(t) e_k(x)$ in the last term, we thus obtain 
    \[\sum_{k\in B} a_k(t) \re f_k(t)+b_k(t) \im f_k(t)=c(t),\]
    where
    \[a_k(t)=i(D^2\mathcal{N}_{u(t)}(ie_k,i\varphi),e_l)_{L^2},\]
    \[b_k(t)=-i(D^2\mathcal{N}_{u(t)}(e_k,i\varphi),e_l)_{L^2},\]
    \[c(t)=\big(\varphi_t,(-|l|^2+ia(x))e_l\big)_{L^2}+i(D\mathcal{N}_{u(t)} (i\varphi_t),e_l)_{L^2}+i(D^2 \mathcal{N}_{u(t)}(y_t,i\varphi),e_l)_{L^2}.\]
    Note that $D^k\mathcal{N}_{u(t)}$ is homogeneous polynomial of degree $p-k$ in $u$ and degree $1$ in each of the $k$ arguments. It is thus easy to see that $a_k,b_k$ belongs to $C^1(0,1;\mathbb{C})$ and $c$ is continuous. 
    Now the Lipschitz-observability of $f$ implies $a_k(t)\equiv b_k(t)\equiv c(t)\equiv 0$ for any $k\in B$. 
    
    To iterate this argument, we take the derivative of $a_k(t)\equiv 0$. A similar reasoning leads to
    \[(D^3 \mathcal{N}_{u(t)} (ie_{k_1},ie_{k_2},i\varphi),e_l)_{L^2(\mathbb{T})}\equiv 0\quad \text{for any }k_1, k_2\in B.\]
    In the same manner, for any $k_1,\dots ,k_{p-1}\in B$, we find that
    \[(D^p \mathcal{N}_{u(t)}(ie_{k_1},\dots ,ie_{k_{p-1}},i\varphi),e_l)=0.\]
    If we pick $k_1=\cdots =k_{p-1}=k\in B$, then
    \[D^p\mathcal{N}_{u(t)}(ie_k,\dots ,ie_k,i\varphi)=i\tfrac{(p-1)!}{(2\pi)^{(p-1)/2}} \big[\tfrac{p+1}{2}\varphi+\tfrac{p-1}{2}e^{2ikx} \bar{\varphi}\big].\]
    Taking \eqref{orthB_0} into account, we obtain that $(e_l,e_k^2 \bar{\varphi})_{L^2(\mathbb{T})}=0$, or equivalently,
    \[(\varphi(t), e_k^2 \bar{e}_l)_{L^2(\mathbb{T})}=\tfrac{1}{2\pi}(\varphi(t), e_{2k-l})_{L^2(\mathbb{T})}=0.\]
    The arbitrariness of $k\in B$ concludes our claim.
\end{proof}

We mention that the proof above is similar in spirit to \cite[Section 4.2]{KNS-20}, which dealt with the quintic complex Ginzburg–Landau equation, but the mechanism of saturation is different.

\section{Exponential mixing for NLS equations}\label{sec_applyNLS}

We apply the AET-based criterion for exponential mixing (Theorem~\ref{thm_criterion}) to random NLS equation \eqref{randomNLS}. To this end, we show that the Markov chain generated by the solutions $u(n)$ at integral times $n$, fits into the setting of random dynamical system in Section~\ref{subsec_RDSsetting}, and then verify the hypotheses $(\mathbf{H1})$--$(\mathbf{H5})$, for which the prerequisites has been addressed in last two sections.

The content of Main Theorem can be easily extended to a wider range of noises. The core assumptions are decomposability and observability.

\begin{itemize}
\item[$(\mathbf{S2})'$] (Noise structure: decomposable and observable) Let $B\subset \mathbb{Z}$ be a saturating subset (see Definition~\ref{def_saturating}), and equip $\mathcal{H}:={\rm span}_{\mathbb{C}} \{e_k:k\in B\}$ with the $H^3$-norm.

The random noise $\eta(t,x)$ has the form
\[\eta(t,x)=\sum_{n=1}^\infty \mathbf{1}_{[n-1,n)}(t) \eta_n(t-n+1,x),\]
where $\eta_n$ is a sequence of $L^2(0,1;\mathcal{H})$-valued i.i.d.~random variables. In addition,

\begin{itemize}
    \item The common law of $\eta_n$, denoted $\ell\in \mathcal{P}(L^2(0,1;\mathcal{H}))$, is observable (see Definition~\ref{def_obsmeas}).
    
    \item Decomposability hypothesis $(\mathbf{H5})$ holds for $\eta_n$.

    \item The constant function $0$ belongs the support of $\ell$.
\end{itemize}
\end{itemize}

Now we state a generalization of the Main Theorem.

\begin{theorem}\label{thm_NLSgeneral}
    Assume the damping $a(x)$ and noise $\eta(t,x)$ satisfy the settings $(\mathbf{S1})$ and $(\mathbf{S2})'$, respectively. Denote by $u_n=u(n)$, where $u(t)$ is the solution of random NLS \eqref{randomNLS}. Then the Markov process $(u_n,\mathbb{P}_u)$ on $H^1(\mathbb{T})$ admits a unique invariant measure $\mu\in \mathcal{P}(H^1(\mathbb{T}))$ with compact support, and exponential mixing holds in the sense of \eqref{EM-NLS}.
\end{theorem}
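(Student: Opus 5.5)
The plan is to apply Theorem~\ref{thm_criterion} with $X=\tilde H^1$, the equivalent norm of Lemma~\ref{lem_equivalentnorm}. The noise space is $E=L^2(0,1;\mathcal{H})$ with $\mathcal{H}$ carrying the $H^3$-norm, and we take $V=H^{5/4}(\mathbb{T})$. The map $S$ is the time-$1$ solution map restricted to $X\times E$, and $\ell=\mathscr{D}(\eta_n)$. Since the $\eta_n$ are i.i.d.\ and supported on disjoint unit intervals, $u_n=S(u_{n-1},\eta_n)$ is exactly the chain \eqref{Markovchain}. Lemma~\ref{lem_Ssmooth} gives smoothness and locally bounded derivatives, since $E\hookrightarrow L^2(0,1;H^1)$. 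Hypothesis $(\mathbf{H5})$ is assumed in $(\mathbf{S2})'$, and it makes $K=\supp\ell$ compact. We will check that the estimate \eqref{EM-1} with $\lambda=\delta_{u_0}$ and $G\lesssim 1+E(\cdot)$ yields \eqref{EM-NLS}.

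\textbf{$(\mathbf{H1})$.} Every $\zeta\in K$ satisfies $\|\zeta\|_{L^2(0,1;H^3)}\le R_K$. Apply Proposition~\ref{prop_H1} with $\sigma=2$ and $R=R_K^2$ to get a bounded attracting set $Y_0\subset H^3$. Replace it by
\[
Y:=\overline{\bigcup_{n\ge 0}S_n(Y_0;K^n)}^{H^1}.
\]
This set is invariant by construction. We then need to show two things. First, $Y$ stays bounded in $H^3$; for this, use the $H^3$ well-posedness of Lemma~\ref{lem_wp} together with a uniform-in-time $H^3$ bound for trajectories started in the absorbing set, which I expect can be extracted from the construction in \cite{CXZZ-25}. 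Second, $Y$ still attracts at rate $C(1+E(u_0))e^{-\kappa n}$. Boundedness in $H^3$ gives compactness in $H^1$, and it also gives $u\in X_1^{5/4,b}$ with a uniform bound by Lemma~\ref{lem_wp}. The bound $\|x\|^2\lesssim E(x)\lesssim \|x\|^2+\|x\|^{p+1}$ yields an increasing $G$.

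\textbf{$(\mathbf{H2})$.} Set $T(y,\zeta):=D_yS(y,\zeta)-e^{-i\theta_u(1)}S_a(1)$. Proposition~\ref{prop_H2} shows $\|T\|_{\mathcal{L}(X,V)}\le C_0$ uniformly over $Y\times K$. Lemma~\ref{lem_equivalentnorm} gives
\[
\|D_yS-T\|_{\mathcal{L}(\tilde H^1)}=\|S_a(1)\|\le e^{-\beta}=:q_0,
\]
which is \eqref{ACL-2}. For the Lipschitz bound \eqref{ACL-3}, split the difference as
\[
\bigl(D_yS(y,\zeta)-D_yS(y,\zeta')\bigr)-\bigl(e^{-i\theta_u(1)}-e^{-i\theta_{u'}(1)}\bigr)S_a(1).
\]
The first part is controlled by the bounded second derivatives. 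The second part is controlled by $|\theta_u(1)-\theta_{u'}(1)|\lesssim\|u-u'\|_{C(0,1;H^1)}\lesssim\|\zeta-\zeta'\|_E$, using local Lipschitz continuity of the flow.

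\textbf{$(\mathbf{H3})$.} Take $\tilde y=0$, which lies in $Y$ because $0\in K$ and $S(0,0)=0$. Take $\tilde\zeta_j=0$. Proposition~\ref{prop_stability} gives $E(u(m))\le Ce^{-\beta m}E(y)$. On the bounded set $Y$ the energy is comparable to $\|y\|^2$, so for $m_0$ large we get $\|S_{m_0}(y;0,\dots,0)\|\le q_0\|y\|$, possibly after enlarging $q_0<1$. Equivalence of norms is harmless here.

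\textbf{$(\mathbf{H4})$.} Fix $y\in Y$, so $y\in H^3$. Since $\ell$ is observable, $\ell$-a.e.\ $\zeta$ is Lipschitz-observable, and $B$ is saturating. Proposition~\ref{prop_H4} then gives that $D_\zeta S(y,\zeta)$ has dense image in $H^1$ on $L^2(0,1;\mathcal{H})=E$. Hence $\ell(K^y)=1$.

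The main obstacle is the construction in $(\mathbf{H1})$: a single set $Y$ must be invariant, compact, bounded in $H^3$, and exponentially attracting all at once. Proposition~\ref{prop_H1} only provides attraction to a bounded set, not invariance. My first attempt is the invariant hull above combined with an $H^3$ energy bound along noisy trajectories of $S$. If that fails, I would instead use the $\omega$-limit-type construction from \cite{CXZZ-25,LWXZZ-24}. Once all hypotheses are verified, Theorem~\ref{thm_criterion} yields the unique invariant measure $\mu$, the compact support $\supp\mu\subset Y$, and the bound \eqref{EM-NLS}.
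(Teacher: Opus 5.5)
Your proposal is essentially the paper's proof: the same choices $X=\tilde H^1$, $E=L^2(0,1;\mathcal{H})$, $V=H^{5/4}$; the invariant hull of the $H^3$-bounded attracting set from Proposition~\ref{prop_H1} for $(\mathbf{H1})$; $T=D_yS-e^{-i\theta_u(1)}S_a(1)$ with Proposition~\ref{prop_H2} and Lemma~\ref{lem_equivalentnorm} for $(\mathbf{H2})$; $\tilde y=0$ with zero controls via Proposition~\ref{prop_stability} for $(\mathbf{H3})$; and Proposition~\ref{prop_H4} for $(\mathbf{H4})$. The paper also only asserts, without proof, the point you flag as the main obstacle (that the hull stays bounded in $H^3$, which Propositions~\ref{prop_H2} and~\ref{prop_H4} need on all of $Y$), and likewise the passage from the weight $G(\|u_0\|)\sim 1+\|u_0\|_{H^1}^{p+1}$ to $1+E(u_0)$, which is cleanest if you let $G$ depend on $x$ through $E(x)$ rather than through $\|x\|$.
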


\begin{proof}[\rm\bf Proof of the Main Theorem]
    In view of Example~\ref{eg_haar}, Haar processes with polynomial decay coefficients is Lipschitz-observable. So $(\mathbf{S2})$ is a special case of $(\mathbf{S2})'$. In particular, except for the assertion that $\mu(H^1\setminus C^\infty)=0$, the Main Theorem is a direct consequence of Theorem~\ref{thm_NLSgeneral}. 
    
    This final assertion follows from Proposition~\ref{prop_H1}. Indeed, for any $\sigma>0$, in view of this proposition, since $\supp(\mu)$ is bounded, if $u_0\in \supp(\mu)$, then almost surely
    \[\dist_{H^1}(u_n,Y_\sigma)\le C(1+E(u_0))e^{-\kappa n}\le Ce^{-\kappa n},\]
    where $Y_\sigma$ is a bounded subset of $H^{1+\sigma}$, and $C$ is independent of $u_0\in \supp(\mu)$. Thus
    \[P_n(u,B_{H^1}(Y_\sigma,Ce^{-\kappa n}))=1\quad \text{for any }u\in \supp(\mu).\]
    Therefore, in view of $\mu=P_n^* \mu$, we find
    \[\mu (B_{H^1}(Y_\sigma,Ce^{-\kappa n}))=\int_{\supp(\mu)} P_n (u,B_{H^1}(Y_\sigma,Ce^{-\kappa n}))\, \mu(du)=1.\]
    Since $Y_\sigma$ is closed in $H^1$, sending $n$ to $\infty$ yields
    \[\mu(Y_\sigma)=1.\]
    Now we conclude that $\mu(H^1\setminus C^\infty)=0$, thanks to $Y_\sigma\subset H^{1+\sigma}$ and $\bigcap_{\sigma>0} H^{1+\sigma}=C^\infty$.
\end{proof}

The remainder of this section is devoted to the proof Theorem~\ref{thm_NLSgeneral}. Since the deterministic NLS equation is globally well-posed, it immediately follows that this model fits into the abstract random dynamical system setting in Section~\ref{subsec_RDSsetting}. Indeed, let
\[X=\tilde{H}^1,\quad E=L^2(0,1;\mathcal{H})\]
and $S\colon X\times E\to X$ be the time-$1$ solution map. Here $\tilde{H}^1$ stands for the Sobolev space $H^1$ equipped with the equivalent norm specified by Lemma~\ref{lem_equivalentnorm}. In view of Lemma~\ref{lem_Ssmooth}, the evolution map $S$ is smooth. In addition, we set
\[V=H^{5/4},\]
which is compactly embedded in $X$.

\begin{proof}[\rm \bf Proof of Theorem~\ref{thm_NLSgeneral}]
    Since $(\mathbf{H5})$ is guaranteed by the noise structure $(\mathbf{S2})'$, it remains to verify $(\mathbf{H1})$--$(\mathbf{H4})$, which are consequences of our study on the deterministic NLS equation.

    \medskip
    \noindent\textbf{Verification of exponential asymptotic compactness $(\mathbf{H1})$.} Thanks to $(\mathbf{H5})$ with $\mathcal{H}$ equipped with $H^3$-norm, almost surely $\|\eta_n\|_{L^2(0,1;H^3)}\le \sum_{k\in \mathbb{N}} b_k^2<\infty$. By virtue of Proposition~\ref{prop_H1}, there exists a bounded subset $Y_0\subset H^3$, and constants $C,\kappa>0$, such that
    \[\dist_{H^1}(u_n,Y_0)\le C(1+E(u_0))e^{-\kappa n}.\]
    As $H^3$ is compactly embedded in $H^1$, this is almost hypothesis $(\mathbf{H1})$, except that $Y_0$ need not be invariant. To this end, we define iteratively $Y_n=S(Y_{n-1},K)$, and set $Y$ to be the closure of $\bigcup_{n=0}^\infty Y_n$. It is not hard to see that $Y$ is invariant and compact; see, e.g., \cite[Proposition 2.2]{LWXZZ-24}.

    \medskip
    \noindent\textbf{Verification of asymptotic compactness of linearization $(\mathbf{H2})$.} Let us set
    \[T(u_0,f)(v_0):=v(1)-e^{-i\theta_u(1)}S_a(1) v_0.\]
    In view of Proposition~\ref{prop_H2}, we see that $T(u_0,f)$ is a bounded linear operator from $X=\tilde{H}^1$ to $V=H^{5/4}$. Now \eqref{ACL-1} follows from \eqref{H2-1} since $Y$ is bounded in $H^3\subset H^{5/4}$. As for \eqref{ACL-2}, by Lemma~\ref{lem_equivalentnorm}, our choice of equivalent norm yields a constant $q_0\in (0,1)$ so that
    \[\|D_{u_0}S(u_0,f)-T(u_0,f)(v_0)\|_{\tilde{H}^1}=\|v(1)-T(u_0,f)(v_0)\|_{\tilde{H}^1}=\|e^{-i\theta_u(1)}S_a(1)v_0\|_{\tilde{H}^1}\le q_0\|v_0\|_{\tilde{H}^1}.\]
    Finally, \eqref{ACL-3} is an easy consequence of local Lipschitz-continuity of solution map.

    \medskip
    
    \noindent\textbf{Verification of dissipativity $(\mathbf{H3})$.} According to Proposition~\ref{prop_stability}, as $Y$ is bounded in $X$, there exist constant $C,\beta>0$, such that for any $u_0\in Y$ and $n\in \mathbb{N}$, we have
    \[\|S_n(u_0;0,\dots ,0)\|_{H^1}\le Ce^{-\beta n} \|u_0\|_{H^1}.\]
    Choose $m_0\in \mathbb{N}$ so that $Ce^{-\beta m_0}\le q_0$, then \eqref{Dis-1} holds with $\tilde{y}=0$ and $\tilde{\zeta}_1=\cdots =\tilde{\zeta}_n=0$.

    \medskip

    \noindent\textbf{Verification of asymptotic controllability around trajectory $(\mathbf{H4})$.} This is a direct consequence of Proposition~\ref{prop_H4}, since we choose $Y$ to be a subset of $H^3$, and $\eta$ is almost surely Lipschitz-observable by our assumption $(\mathbf{S2})'$.

    \medskip

    Now we have justified all assumptions of Theorem~\ref{thm_criterion}, which directly implies Theorem~\ref{thm_NLSgeneral}.
\end{proof}

\begin{appendices}

\section*{Appendix}
% \settocdepth{subsection}

\stepcounter{section}
\numberwithin{equation}{subsection}
\numberwithin{theorem}{subsection}
\setcounter{equation}{0}
\setcounter{theorem}{0}
\renewcommand\thesubsection{\Alph{subsection}}

\subsection{Supplementary proofs for the abstract criterion}\label{appendix_auxproof}

We gather here some adaptions of existing arguments in \cite{KNS-20,NZZ-24} needed for our AET-based criterion.

\subsubsection{Control property via asymptotic compactness and approximate inverse}\label{appendix_meastrans}

This subsection exhibits the technical proof of Lemma~\ref{lem_approxcontrol}. We first follow \cite[Section 2.6]{KNS-20} and \cite[Section 7.2]{NZZ-24} to define the family of Borel sets $K^{y,\sigma}\subset K^y$ and derive \eqref{ApproxCtrl-1} and \eqref{ApproxCtrl-2}. The setting here is slightly different, while the arguments are verbatim. Specifically, Lemma~\ref{lem_inverse} below allows us to obtain uniform estimates which are independent of $y\in Y$ and $\zeta\in K^{y,\sigma}$. Then we apply the new idea illustrated in Lemma~\ref{lem_contoltoy}, exploiting the compact approximation $T(y,\zeta)$ to derive the local stabilization property \eqref{ApproxCtrl-3}. As the constants now only depend on $\sigma$, the proof of Lemma~\ref{lem_contoltoy} can be immediately adapted. More precisely, for $\zeta\in K^{y,\sigma}$, the constant $C_{y,\zeta}$ in \eqref{stabilization-1} can be improved to a constant that depends only on $\sigma$ and not on $y$ or $\zeta$.

Throughout the remainder of this subsection, we denote $A\colon Y\times E\to \mathcal{L}(E,X)$ as
\[A(y,\zeta)=D_\zeta S(y,\zeta)\colon E\to X.\]
Recall $(\psi_k)_{k\in \mathbb{N}}$ is an orthonormal basis of $E$ involved in the noise structure $(\mathbf{H5})$. Let $E_M$ be the ``low-frequency" subspace spanned by $\psi_1,\dots ,\psi_M$, and $P_M$ the orthogonal projection from $E$ to $E_M$. Then $Q_M:=\Id-P_M$ is the orthogonal projection to high frequencies $E_M^{\perp}$.

In view of hypothesis $(\mathbf{H4})$ and \cite[Theorem 2.8]{KNS-20}, we can construct an approximate inverse of $A(y,\zeta)$ with the following property. We mention that although \cite{KNS-20} assumes the family of operator $A(y,\zeta)$ to be analytic (instead of smooth), the proof still works out.

\begin{lemma}[{\cite[Theorem 2.8]{KNS-20}}]\label{lem_inverse}
    Under the assumptions of Theorem~{\rm\ref{thm_criterion}}, for any $\varepsilon\in (0,1)$, there exists $M_\varepsilon \in \mathbb{N}$, $\theta_\varepsilon,\gamma_\varepsilon,C_\varepsilon>0$, and a smooth function $\mathfrak{F}_\varepsilon \colon Y\times E\to [0,\infty)$, such that
    \begin{equation}\label{inverse-0}
        \ell(\{\mathfrak{F}_\varepsilon(y,\cdot)\le \theta_\varepsilon\})\ge 1-\varepsilon\quad \text{for any }y\in Y.
    \end{equation}
    And $\mathfrak{F}_\varepsilon (y,\cdot)$ is locally Lipschitz in the sense that for any $R>0$,
    \begin{equation}\label{mathfrakFLip}
        |\mathfrak{F}_\varepsilon(y,\zeta)-\mathfrak{F}_\varepsilon (y,\zeta')|\le C(\varepsilon,R) \|\zeta-\zeta'\|_E \quad \text{for any }y\in Y,\ \zeta,\zeta'\in B_E(R).
    \end{equation}
    
    Moreover, define $R_\varepsilon\colon Y\times E \to \mathcal{L}(X,E)$ by
    \begin{equation}\label{Rdef}
        R_\varepsilon=P_{M_\varepsilon} A^* (AA^*+\gamma_\varepsilon)^{-1},
    \end{equation}
    and define the compact set \begin{equation}\label{D_epsilon}
        D_\varepsilon:=\{(y,\zeta)\in Y\times K:\mathfrak{F}_\varepsilon(y,\zeta)\le 2\theta_\varepsilon\},
    \end{equation}
    then for any $(y,\zeta)\in D_\varepsilon$, the following estimates hold:
    \begin{equation}\label{inverse-1}
        \|R_\varepsilon (y,\zeta)\|_{\mathcal{L}(X,E)}\le C_\varepsilon,
    \end{equation}\begin{equation}\label{inverse-2}
        \|A(y,\zeta)R_\varepsilon (y,\zeta) v-v\|_X<\varepsilon \|v\|_V\quad \text{for any }v\in V.
    \end{equation}
\end{lemma}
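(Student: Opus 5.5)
The plan follows \cite[Theorem 2.8]{KNS-20}: build $\mathfrak{F}_\varepsilon$ from a finite-dimensional Gramian and get uniformity from compactness of $Y\times K$ and of the embedding $V\hookrightarrow X$. Since $V$ embeds compactly into $X$, we first pick a finite-rank orthogonal projection $\Pi_N\colon X\to X_N$, with $\dim X_N=N$, such that
\[
\|(\Id-\Pi_N)v\|_X\le \tfrac{\varepsilon}{4}\|v\|_V \quad\text{for } v\in V.
\]
It then suffices to invert $A(y,\zeta)$ approximately on $X_N$, uniformly and with bounded norm, and to control the error on the complement via $\|AR_\varepsilon\|_{\mathcal{L}(X)}\le C$.

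For the finite-dimensional part, fix $M$ and consider the $N\times N$ matrix
\[
G_M(y,\zeta):=\Pi_N A(y,\zeta)P_M A(y,\zeta)^*\Pi_N .
\]
By $(\mathbf{H4})$, for $\zeta\in K^y$ the range of $A(y,\zeta)$ is dense, so $\Pi_NA$ is onto $X_N$. Since $P_M\to \Id$ strongly and $\Pi_N A$ has finite rank, $\Pi_N A P_M$ is onto for all $M$ large, depending on $(y,\zeta)$. We set
\[
\mathfrak{F}_{M}(y,\zeta):=\bigl(\det G_M(y,\zeta)+\text{small regularization}\bigr)^{-1}
\]
(or $\|G_M^{-1}\|$, suitably smoothed), which is smooth in $(y,\zeta)$ because $S$ is smooth, and locally Lipschitz in $\zeta$ uniformly in $y\in Y$ because second derivatives of $S$ are bounded on bounded sets. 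For each $y$, the set $\{\zeta:\mathfrak{F}_M(y,\zeta)<\infty\}$ increases to a set containing $K^y$ as $M\to\infty$, so its $\ell$-measure tends to $1$.

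The \textbf{main obstacle} is choosing $M$ and $\theta$ uniformly in $y$. I would argue by compactness of $Y$ and lower semicontinuity. For each $y_0$, choose $M,\theta$ with $\ell(\mathfrak{F}_M(y_0,\cdot)<\theta/2)>1-\varepsilon/2$. Using continuity of $\mathfrak{F}_M$ in $y$ and dominated convergence, this bound persists on a neighbourhood of $y_0$ with threshold $\theta$ and mass $\ge 1-\varepsilon$. Cover $Y$ by finitely many such neighbourhoods and take the maximal $M$ and $\theta$; this works because $\mathfrak{F}_M$ can be taken monotone in $M$, since $P_M$ increasing makes $G_M$ increasing. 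This gives \eqref{inverse-0} and \eqref{mathfrakFLip}.

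On $D_\varepsilon$ (defined with $2\theta_\varepsilon$), the eigenvalues of $G_{M_\varepsilon}$ are bounded below by some $c_\varepsilon>0$. Using a Neumann/perturbation argument, we show that for $\gamma_\varepsilon$ small,
\[
\|\Pi_N\bigl(AP_MA^*(AA^*+\gamma)^{-1}\bigr)w-w\|\le \tfrac{\varepsilon}{4}\|w\|\quad\text{for } w\in X_N,
\]
uniformly on the compact set $D_\varepsilon$. This follows by comparing $AP_MA^*$ with $AA^*$: the difference $AQ_MA^*$ is small on $X_N$-directions, uniformly on compacts, since $\{A(y,\zeta)^*\Pi_N\}$ is a compact family of finite-rank operators and $Q_M\to0$ uniformly on compact sets.

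The bound \eqref{inverse-1} follows from $\|A^*(AA^*+\gamma)^{-1}\|\le\gamma^{-1/2}$. Combining this with the tail estimate on $(\Id-\Pi_N)v$ yields \eqref{inverse-2}.
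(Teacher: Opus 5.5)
\textbf{There is a genuine gap: your Gramian does not control the operator the lemma prescribes.} The failing step is the link between $G_M=\Pi_NAP_MA^*\Pi_N$ and $R_\varepsilon=P_{M_\varepsilon}A^*(AA^*+\gamma_\varepsilon)^{-1}$ from \eqref{Rdef}. A lower bound on $G_M$ only tests $A^*$ on $X_N$. It says nothing about how $X_N$ sits relative to the small spectrum of $B:=AA^*$, yet $(B+\gamma)^{-1}$ acts on all of $X$.

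Here is a counterexample, already with $P_M=\Id$. Take unit vectors $w\in X_N$ and $u\perp X_N$, and set $e_{1,2}=(w\pm u)/\sqrt2$. Let $Be_1=e_1$, $Be_2=\delta e_2$ with $\delta\ge0$, and $B=\Id$ on $X_N\ominus\mathbb{R}w$. Then $\Pi_NB\Pi_N\ge\frac12$ on $X_N$ for every $\delta$, but
\[
\Pi_N B(B+\gamma)^{-1}w-w=-\tfrac12\Bigl(\tfrac{\gamma}{1+\gamma}+\tfrac{\gamma}{\delta+\gamma}\Bigr)w .
\]
This is small only if $\gamma\ll\delta$, and has norm at least $\frac12\|w\|$ for every $\gamma$ when $\delta=0$.

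Your $D_\varepsilon$ is cut out by a closed condition on the Gramian alone. It can therefore contain such points, including points where the range is not dense, which $(\mathbf{H4})$ allows on an $\ell$-null set. So no choice of $\gamma_\varepsilon$ gives your uniform estimate on $X_N$, and compactness of $D_\varepsilon$ cannot repair this.

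Two further steps are also missing.
\begin{itemize}
\item You never estimate $(\Id-\Pi_N)AR_\varepsilon\Pi_Nv$, which is part of the $X$-norm in \eqref{inverse-2}.
\item Your tail bound $(\|AR_\varepsilon\|+1)\frac{\varepsilon}{4}\|v\|_V$ needs $\|AR_\varepsilon\|_{\mathcal{L}(X)}\le1$. Once $P_M$ is inserted, the only general bound is of order $\|A\|\gamma^{-1/2}$, and you choose $\gamma$ after $N$.
\end{itemize}

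\textbf{The fix is to let $\mathfrak{F}_\varepsilon$ measure the residual of $R_\varepsilon$ itself.} This is what the paper takes from \cite{KNS-20} (see the remark after the lemma): $\mathfrak{F}_\varepsilon=\sum_j\|AR_\varepsilon v_j-v_j\|^2$ over a finite $X$-net $\{v_j\}$ of $B_V(1)$. The useful splitting is
\[
AR_\varepsilon-\Id=-\gamma(AA^*+\gamma)^{-1}-AQ_MA^*(AA^*+\gamma)^{-1}.
\]
The first term is a contraction for every $\gamma$. For fixed $\gamma$, the second term is uniformly small on $Y\times K\times B_V(1)$ as $M\to\infty$. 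This holds because $\{A^*(AA^*+\gamma)^{-1}v\}$ is precompact in $E$: $A$ is continuous in operator norm and $Y\times K\times\overline{B_V(1)}$ is compact. Hence the net can be fixed independently of $\gamma$, and \eqref{inverse-2} on $D_\varepsilon$ is just a triangle inequality.

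For \eqref{inverse-0}, note that $\zeta\in K^y$ gives $\ker AA^*=\{0\}$. Strong convergence, the bound $\|\gamma(AA^*+\gamma)^{-1}\|\le1$ and precompactness of $B_V(1)$ in $X$ then give $\sup_{v\in B_V(1)}\|\gamma(AA^*+\gamma)^{-1}v\|\to0$. This quantity is monotone in $\gamma$ and continuous in $(y,\zeta)$. A Dini-type argument on the compact $Y$ therefore yields $\gamma_\varepsilon$ uniformly in $y$, and then $M_\varepsilon$. Your bound \eqref{inverse-1} is fine as stated.
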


\begin{remark}
    The auxiliary function $\mathfrak{F}_\varepsilon$ takes the form of
    \[\mathfrak{F}_\varepsilon(y,\zeta)=\sum_{j=1}^N \|A(y,\zeta) R_\varepsilon (y,\zeta)v_j-v_j\|^2,\]
    where $v_1,\dots ,v_N$ is an $(\varepsilon/4)$-net of $B_V(1)$. Specifically, it is easy to see that $\mathfrak{F}_\varepsilon$ is smooth, and the local Lipschitz-continuity \eqref{mathfrakFLip} holds. In addition, $\mathfrak{F}_\varepsilon$ characterizes how far is $AR_\varepsilon$ from the identity map in $V$, which is naturally related to \eqref{inverse-2}.
\end{remark}

With these preparations in hand, we are now able to accomplish the proof of Lemma~\ref{lem_approxcontrol}.

\begin{proof}[\rm \bf Proof of Lemma~\ref{lem_approxcontrol}]
    Fix the small parameter $\varepsilon\in (0,1)$ satisfying
    \begin{equation}\label{epsilonsmall}
        \varepsilon\le \sigma\quad \text{and}\quad q_0+C_0\varepsilon<q_1,
    \end{equation}
    where $C_0$ is the constant involved in $(\mathbf{H2})$. We apply Lemma~\ref{lem_inverse} with this specific $\varepsilon$.
    
    Recall the compact set $D_\varepsilon\subset Y\times K$ is defined by \eqref{D_epsilon}. Denote with $D_\varepsilon'=(\Id\times Q_{M_\varepsilon}) (D_\varepsilon)$ the projection of $D_\varepsilon$ to $Y\times E_{M_\varepsilon}^\perp$. Let $O_{M_\varepsilon}\subset E_{M_\varepsilon}$ be a sufficiently large open ball containing $P_{M_\varepsilon}(K)$. In the rest of the proof, we write $\zeta_{M_\varepsilon}$ and $\zeta'$ to distinguish elements of $E_{M_\varepsilon}$ and $E_{M_\varepsilon}^\perp$.
    
    According to \cite[Lemma 7.5]{NZZ-24}, $D_\varepsilon'$ can be decomposed into a disjoint union of finitely many Borel subsets $D_{\varepsilon,1}',\dots D_{\varepsilon,m}'$, and there are constants $\theta_{\varepsilon,l}\in (\theta_\varepsilon,2\theta_\varepsilon)$, so that for any $(y,\zeta')\in \overline{D_{\varepsilon,l}'}$, the number $0$ is a regular value\footnote{Given a smooth map $F\colon M\to N$ between smooth manifolds, the value $y\in N$ is called an regular value, if the tangent map $DF(x)\colon T_xM\to T_yN$ is surjective for any $x\in F^{-1}(y)$ (possibly empty).} for the smooth map 
    \begin{equation}\label{regularmap}
        O_{M_\varepsilon}\ni\zeta_{M_\varepsilon}\mapsto \mathfrak{F}_\varepsilon(y,\zeta_{M_\varepsilon}+\zeta')-\theta_{\varepsilon,l}.
    \end{equation}

    Let us now define for $y\in Y$, the Borel set $K^{y,\sigma}\subset K$ by
    \[K^{y,\sigma}=\bigcup_{l=1}^m \{\zeta\in K: (y,Q_{M_\varepsilon}(\zeta))\in D_{\varepsilon,l}',\ \mathfrak{F}_\varepsilon(y,\zeta)\le \theta_{\varepsilon,l}\}.\]
    And introduce $\Phi\colon Y\times X\times E\to E$ as
    \[\Phi^{y,x}(\zeta)=\begin{cases}
        -R_\varepsilon (y,\zeta)T(y,\zeta)(x-y),&\zeta\in K^{y,\sigma},\\
        0,&\text{elsewhere}.
    \end{cases}\]
    This is a Borel map with range contained in $E_{M_\varepsilon}$, since $R_\varepsilon$ with the form \eqref{Rdef} already has range in $E_{M_\varepsilon}$. Now it remains to demonstrate \eqref{ApproxCtrl-1}--\eqref{ApproxCtrl-3}, with $\varepsilon$ satisfying \eqref{epsilonsmall}. 
    
    Firstly, since we have assumed $\varepsilon\le \sigma$ in \eqref{epsilonsmall}, the lower bound of $\ell(K^{y,\sigma})$ \eqref{ApproxCtrl-1} is immediate. In fact, owing to $D_\varepsilon'=\bigcup_{l=1}^m D_{\varepsilon,l}'$ and $\theta_{\varepsilon,l}\ge \theta_{\varepsilon}$, it is easy to see
    \[\{\zeta\in K:\mathfrak{F}_\varepsilon(y,\zeta)\le \theta_\varepsilon\}\subset K^{y,\sigma}\quad \text{for any }y\in Y.\]
    Thanks to \eqref{inverse-0} and $\varepsilon\le \sigma$, we obtain $\ell(K^{y,\sigma})\ge 1-\varepsilon\ge 1-\sigma$.
    
    Secondly, the total variation estimate \eqref{ApproxCtrl-2} is a consequence of \cite[Theorem 2.4]{KNS-20}, once we can justify the assumptions (a) and (b) therein, with parameter $\gamma=1$ in (b). To verify (a), we point out that $\Phi^{y,x}$ vanishes on $K\setminus K^{y,\sigma}$, and the Lipschitz estimate on $K^{y,\sigma}$ follows from \eqref{ACL-1}, \eqref{ACL-3}, \eqref{Rdef}, \eqref{inverse-1} and the local Lipschitz-continuity of $A$. To verify (b), note that \cite[Corollary~7.7]{NZZ-24} is applicable, since $0$ is a regular value of \eqref{regularmap}, and $\mathfrak{F}_\varepsilon(y,\cdot)$ is locally Lipschitz in the sense of \eqref{mathfrakFLip}. Then the same arguments as in \cite[Section~2.6]{KNS-20}, with \cite[Corollary~7.7]{NZZ-24} in place of \cite[Corollary 3.3]{KNS-20}, would establish the assumption (b) with parameter $\gamma=1$.
    
    Finally, for local stabilization \eqref{ApproxCtrl-3}, assume $\|y-x\|\le \delta$ and $\zeta\in K^{y,\sigma}$, with $\delta\in (0,1)$ to be determined. Then we simply follow the reasoning in the proof of Lemma~\ref{lem_contoltoy} line-by-line, with $R^\gamma$ replaced by $R_\varepsilon(y,\zeta)$, to find that (cf.~\eqref{stabilization-1})
    \[\|S(y,\zeta)-S(x,\zeta+\Phi^{y,x}(\zeta))\|\le (q_0+C_0 \varepsilon +C_\varepsilon \delta) \|y-x\|.\]
    The main difference is the last constant $C_\varepsilon$ from \eqref{inverse-1}. Nevertheless, thanks to \eqref{epsilonsmall}, we can choose $\delta$ sufficiently small (depending only on $\varepsilon$) to ensure \eqref{ApproxCtrl-3}.
\end{proof}

\subsubsection{From control to coupling}\label{appendix_control->coupling}

We mimic \cite[Section 2.2]{KNS-20} to drive Lemma~\ref{lem_coupling} from Lemma~\ref{lem_approxcontrol}.

\begin{proof}[\rm {\bf Proof of ``Lemma~\ref{lem_approxcontrol} $\Rightarrow$ Lemma~\ref{lem_coupling}"}]
    Choose $\sigma=\nu/2\in (0,1)$ and $q_1=q_2\in (q_0,1)$ in Lemma~\ref{lem_approxcontrol}, which gives rise to notions $\delta$, $C_\sigma$, $\Phi$ and $K^{y,\sigma}$. Next we fix $d_\nu\in (0,1)$ so small that
    \begin{equation}\label{dsmall}
        d_\nu\le \delta\quad \text{and}\quad C_\sigma d_\nu^{1/2}\le \nu/2
    \end{equation}
    Let us show that for any $d\in (0,d_\nu]$, the desired coupling exists, and \eqref{coupling-2} holds with $C_\nu=C_\sigma$.
    
    If $(y,y')\in \mathbf{Y}_\infty\cup \mathbf{Y}_{<0}$, we obey the requirement in assertion (a) and set
    \[V(y,y')=S(y,\tilde{\eta})\quad \text{and}\quad V'(y,y')=S(y',\tilde{\eta}).\]
    Here $\tilde{\eta}$ is an i.i.d.~copy of random noise $\eta_n$, whose law is equal to $\ell$.
    
    Next we turn to the case $(y,y')\in \mathbf{Y}_n\, (0\le n<\infty)$. We can find $(\hat{\eta},\eta')\in \mathcal{C}((\Id+\Phi^{y,y'})_*\ell, \ell)$ as a maximal coupling (the dependence on $y,y'$ is hidden for simplicity), i.e.
    \begin{equation}\label{maximalcoupling}
        \mathbb{P}(\eta'\not =\hat{\eta})=\|\ell-(\Id+\Phi^{y,y'})_*\ell\|_{TV}.
    \end{equation}
    Moreover, $(\hat{\eta},\eta')$ can be taken to be measurable in $y,y'\in Y$; see, e.g., \cite[Theorem~1.2.28]{KS-12}. Since the law of $\tilde{\eta}+\Phi^{y,y'}(\tilde{\eta})$ is also equal to $(\Id +\Phi^{y,y'})_*\ell$, by virtue of a gluing lemma \cite[Theorem~7.1]{KNS-20}, we can construct a tuple of random variables $(\eta,\hat{\eta},\eta')$, so that $(\hat{\eta},\eta')$ is a maximal coupling between $(\Id+\Phi^{y,y'})_*(\ell)$ and $\ell$ as above, and
    \[\mathscr{D}(\eta,\hat{\eta})=\mathscr{D}(\tilde{\eta},\tilde{\eta}+\Phi^{y,y'}(\tilde{\eta})).\]
    In addition, $(\eta,\hat{\eta},\eta')$ remains measurable in $y,y'$. Note that $\mathscr{D}(\eta)=\mathscr{D}(\eta')=\ell$, and $\hat{\eta}=\eta+\Phi^{y,y'}(\eta)$ almost surely. We can thus define the coupling between $P(y,\cdot)$ and $P(y',\cdot)$ by
    \[V(y,y')=S(y,\eta)\quad \text{and}\quad V'(y,y')=S(y',\eta').\]
    Then the assertion (a) of Lemma~\ref{lem_coupling} follows immediately. It remains to verify (b). In the rest of the proof, we fix $y,y'\in Y$ with $\|y-y'\|\le d$, and omit the arguments $y,y'$ in $V,V'$.

    To this end, consider the partition of $\Omega$ by three events
    \[\Omega^{y,y'}_{\rm coup}=\{\eta \in K^{y,\sigma}\}\cap \{\eta'=\hat{\eta}\},\quad \Omega^{y,y'}_{\rm except}=\{\eta \not \in K^{y,\sigma}\}\cap \{\eta'=\hat{\eta}\},\quad \Omega^{y,y'}_{\rm uncoup}=\{\eta' \not=\hat{\eta}\}.\]
    Thanks to \eqref{ApproxCtrl-1}, \eqref{ApproxCtrl-2} and \eqref{dsmall}, the probability of these events can be estimated by
    \begin{equation}\label{probcoup}
        \begin{aligned}
            \mathbb{P}(\Omega^{y,y'}_{\rm coup})&\ge \ell(K^{y,\sigma})-\mathbb{P}(\eta'\not =\eta'')=\ell(K^{y,\sigma})-\|\ell-(\Id+\Phi^{y,y'})_*\ell\|_{TV}\\
            &\ge 1-\sigma-C_\sigma d^{1/2}\ge 1-\nu,
        \end{aligned}
    \end{equation}
    and
    \begin{equation}\label{probuncoup}
        \mathbb{P}(\Omega^{y,y'}_{\rm uncoup})=\mathbb{P}(\eta'\not =\eta'')\le C_\sigma \|x-x'\|^{1/2}.
    \end{equation}
    
    On the event $\Omega_{\rm coup}^{y,y'}$, by \eqref{ApproxCtrl-3} we have
    \[\|V-V'\|=\|S(y,\eta)-S(y',\eta+\Phi^{y,y'}(\eta))\|\le q_2 \|y-y'\|.\]
    This together with \eqref{probcoup} implies \eqref{coupling-1}. 
    
    Furthermore, on the event $\Omega^{y,y'}_{\rm except}$ we have $\Phi^{y,y'}(\eta)=0$ and thus $\eta'-\hat{\eta}=\eta$. Hence the Lipschitz-continuity \eqref{LipS} implies that, if $(y,y')\in \mathbf{Y}_n$ with $n\ge L$, then
    \[\{(V,V')\in \mathbf{Y}_{<n-L}\}\subset \Omega^{y,y'}_{\rm uncoup},\]
    Thus \eqref{coupling-2} follows from \eqref{probuncoup}.
    % \[\mathbb{P}((V,V')\in
    % \mathbf{Y}_{<n-L})\le \mathbb{P}(\Omega^{y,y'}_{\rm uncoup})\le C_\sigma \|x-x'\|^{1/2}.\]
    % In particular, this constant $C_\sigma$ is independent of $d$, and only depends on $\sigma=\nu/2$.
\end{proof}

\subsubsection{From coupling to mixing}\label{appendix_coupling->mixing}

We now tackle Proposition~\ref{prop_restricY}, relying on Lemma~\ref{lem_coupling} and some arguments from \cite[Section 2.2]{KNS-20}. Meanwhile, as the dissipation in hypothesis $(\mathbf{H3})$ only occurs at time $m_0$, we need to consider the $m_0$-th iteration of the coupling $(V,V')$.

To start with, we introduce the well-known Kantorovich functionals. Recall $(y_k,\mathbb{P}_y)$ denotes the Markov process on compact subset $Y$ in Hilbert space $X$. Consider a bounded symmetric Borel function $F\colon Y\times Y\to \mathbb{R}$, satisfying
\begin{equation}\label{KF-1}
    F(y,y')\ge C \|y-y'\|^\beta\quad \text{for any }y,y'\in Y,
\end{equation}
with constants $C>0$ and $\beta\in (0,1]$ independent of $y,y'$. The Kantorovich functional associated with $F$ is denoted by $\mathcal{K}_F\colon \mathcal{P}(Y)\times \mathcal{P}(Y)\to [0,\infty)$, and defined as (recall $\mathcal{C}$ refers to couplings)
\[\mathcal{K}_F(\mu_1,\mu_2):=\inf \{\mathbb{E} F(\xi_1,\xi_2): (\xi_1,\xi_2)\in \mathcal{C}(\mu_1,\mu_2)\}.\]

The following implication is well-known, and can be found in, e.g., \cite[Theorem 3.1.1]{KS-12}.
% By definition, for any $(\xi_1,\xi_2)\in \mathcal{C}(\mu_1,\mu_2)$,
% \[\|\mu_1-\mu_2\|_L^*=\sup_{\|f\|_L=1} |\mathbb{E}f(\xi_1)-\mathbb{E}f(\xi_2)|\le \mathbb{E} \|\xi_1-\xi_2\|\le C\mathbb{E} F(\xi_1,\xi_2)^{1/\beta}.\]
% Since $F$ is bounded and $1/\beta\ge 1$, we thus obtain
% \[\|\mu_1-\mu_2\|_L^*\le C \|F\|_{L^\infty}^{1/\beta-1} \mathcal{K}_F(\mu_1,\mu_2).\]
% This suggests to interpret exponential mixing as the exponential decay of Kantorovich functional. Indeed, we have the following result, which is a special case of \cite[Theorem 3.1.1]{KS-12}.

\begin{lemma}\label{lem_Kanfunctional}
    Under the above settings, suppose there exists $m\in \mathbb{N}$ and $c\in (0,1)$ such that
    \begin{equation}\label{KF-2}
        \mathcal{K}_F(P_m^* \mu_1,P_m^*\mu_2)\le c\mathcal{K}_F(\mu_1,\mu_2) \quad \text{for any }\mu_1,\mu_2\in \mathcal{P}(Y).
    \end{equation}
    Then the Markov process $(y_k,\mathbb{P}_y)$ admits a unique invariant measure $\mu\in \mathcal{P}(Y)$, and the exponential mixing property holds in the sense of \eqref{EM-2}.
\end{lemma}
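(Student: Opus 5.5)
The plan is to show that $P_m^*$ contracts the Kantorovich functional strictly. This should give uniqueness of the invariant measure, and iterating the contraction should give exponential mixing in the dual-Lipschitz distance.

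\textbf{Step 1 (Cauchy property).} Iterating \eqref{KF-2} gives $\mathcal{K}_F(P_{km}^*\mu_1,P_{km}^*\mu_2)\le c^k\mathcal{K}_F(\mu_1,\mu_2)\le c^k\|F\|_\infty$. Next I compare $\mathcal{K}_F$ with $\|\cdot\|_L^*$. For any coupling $(\xi_1,\xi_2)$ and any $f$ with $\|f\|_{L_b}\le 1$,
\[
|\mathbb{E}f(\xi_1)-\mathbb{E}f(\xi_2)|\le \mathbb{E}\min\{2,\|\xi_1-\xi_2\|\}.
\]
Since $Y$ is bounded and $\beta\le1$, the right-hand side is at most $C'\,\mathbb{E}\|\xi_1-\xi_2\|^\beta$, which by \eqref{KF-1} is at most $C''\,\mathbb{E}F(\xi_1,\xi_2)$. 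Taking the infimum over couplings,
\[
\|\mu_1-\mu_2\|_L^*\le C''\mathcal{K}_F(\mu_1,\mu_2).
\]
Applying this with $\mu_1=\delta_y$ and $\mu_2=P_r^*\delta_y$ for $0\le r<m$, I get that $(P_{km+r}^*\delta_y)_k$ is Cauchy in $(\mathcal{P}(Y),\|\cdot\|_L^*)$. This space is complete because $Y$ is compact.

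\textbf{Step 2 (invariant measure and uniqueness).} Existence of an invariant measure follows from Krylov--Bogolyubov on the compact set $Y$, as already noted in the paper. For uniqueness, if $\mu,\mu'$ are both invariant, then
\[
\mathcal{K}_F(\mu,\mu')=\mathcal{K}_F(P_m^*\mu,P_m^*\mu')\le c\,\mathcal{K}_F(\mu,\mu').
\]
Hence $\mathcal{K}_F(\mu,\mu')=0$, and the comparison from Step 1 gives $\mu=\mu'$.

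\textbf{Step 3 (rate).} For $y_0\in Y$ and $n=km+r$ with $0\le r<m$, invariance gives $\mu=P_{km}^*\mu$. Then
\[
\|P_n^*\delta_{y_0}-\mu\|_L^*\le C''\mathcal{K}_F(P_{km}^*P_r^*\delta_{y_0},P_{km}^*\mu)\le C''c^k\|F\|_\infty.
\]
This is at most $Ce^{-\gamma n}$ with $\gamma=-\tfrac1m\log c$ and $C=C''\|F\|_\infty c^{-1}$, which is \eqref{EM-2}.

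\textbf{Main obstacle.} The only delicate point is measurability when composing couplings, but this plan avoids it: I only use \eqref{KF-2} as a hypothesis, never re-derive it, and the comparison between $\mathcal{K}_F$ and $\|\cdot\|_L^*$ is elementary. The real difficulty, which lies outside this lemma, is constructing $F$ (from the partition $\mathbf{Y}_l$ and Lemma~\ref{lem_coupling}) so that \eqref{KF-2} holds.
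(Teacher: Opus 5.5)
Your proof is correct. The paper gives no argument and only cites \cite[Theorem 3.1.1]{KS-12}, and yours is the standard one behind that result: the comparison $\|\mu_1-\mu_2\|_L^*\le C\,\mathcal{K}_F(\mu_1,\mu_2)$ from \eqref{KF-1}, iteration of \eqref{KF-2} and $\mathcal{K}_F\le\sup F$ give uniqueness and the rate $c^{\lfloor n/m\rfloor}$. One slip: Cauchy-ness in Step 1 needs $\mu_1=P_r^*\delta_y$ and $\mu_2=P_{m+r}^*\delta_y$, not $\mu_1=\delta_y$ and $\mu_2=P_r^*\delta_y$; the step is not needed anyway, since Krylov--Bogolyubov gives existence and Step 3 gives convergence directly.
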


We next invoke this lemma to derive Proposition~\ref{prop_restricY} from Lemma~\ref{lem_coupling}.

\begin{proof}[\rm \textbf{Proof of ``Lemma~\ref{lem_coupling} $\Rightarrow$ Proposition~\ref{prop_restricY}"}]
    Fix any $q_2\in (q,1)$ in Lemma~\ref{lem_coupling}, and choose $\nu\in (0,1)$ sufficiently small, so that
    \begin{equation}\label{c1def}
        c_1:=q_2^{m_0/2}+2m_0 \nu q_2^{-m_0 L/2}<1,
    \end{equation}
    where $m_0$ appears in hypothesis $(\mathbf{H3})$. Then fix $d\in (0,d_\nu]$ sufficiently small so that
    \begin{equation}\label{c2def}
        c_2:=q_2^{m_0/2}+2C_\nu m_0 q^{-m_0 L/2}\cdot d^{1/2}+m_0 \nu q_2^{-m_0 L/2}<1.
    \end{equation}
    Our goal is to construct a suitable Kantorovich functional, such that \eqref{KF-2} holds with $m=m_0$.
    
    First we introduce the iteration of the coupling map $V,V'$. Set the sample space $\boldsymbol{\Omega}=\Omega^{\mathbb{N}}$ equipped with product measure. For $\boldsymbol{\omega}=(\omega_n)_{n\in \mathbb{N}}\in \boldsymbol{\Omega}$, let $(V_0(y,y'),V_0'(y,y'))=(y,y')$, and iteratively define for $n\ge 1$ (the arguments $y,y',\boldsymbol{\omega}$ in $V_{n-1},V_{n-1}'$ are omitted for clarity)
    \[V_n(y,y',\boldsymbol{\omega})=V(V_{n-1},V_{n-1}',\omega_n)\quad \text{and}\quad V_n'(y,y',\boldsymbol{\omega})=V'(V_{n-1},V_{n-1}',\omega_n).\]
    Since $(V(y,y'),V'(y,y'))\in \mathcal{C}(P(y,\cdot),P(y',\cdot))$ for any $y,y'\in Y$, and due to the Kolmogorov--Chapman relation, it is easy to check by induction that 
    \[(V_n(y,y'),V_n'(y,y'))\in \mathcal{C}(P_n(y,\cdot),P_n(y',\cdot))\quad \text{for any }n\in \mathbb{N}.\]
    
    Next we construct a bounded Borel function $F\colon Y\times Y\to [0,\infty)$ by
    \[F(y,y')=\begin{cases}
        0,&(y,y')\in \mathbf{Y}_\infty,\\
        (q_2^n d)^{1/2},&(y,y')\in \mathbf{Y}_n,\ n\ge 0,\\
        (2-(2/p)^k)d^{1/2},&(y,y')\in \mathbf{Y}_k,\ -N+1\le k\le -1,\\
        2d^{1/2},&(y,y')\in \mathbf{Y}_{-N}.
    \end{cases}\]
    Here the parameter $p\in (0,1)$ will be chosen later; we also mention that $2-(2/p)^k>0$ since $k<0$. Note that $F$ is a constant on each $\mathbf{Y}_l\, (-N\le l\le \infty)$, which is denoted with $F_l$, and is decreasing with respect to $l$. One immediate checks \eqref{KF-1} with $\beta=1/2$, as
    \[F(y,y')^2\ge \begin{cases}
        \|y-y'\|,&(y,y')\in \mathbf{Y}_n,\ 0\le n\le \infty,\\
        (d/2R)\|y-y'\|,&(y,y')\in \mathbf{Y}_k,\ -N\le k\le -1.
    \end{cases}\]

    In order to establish \eqref{KF-2} with $m=m_0$, invoking a standard argument (see, e.g., \cite[Section 2.2]{KNS-20}), it suffices to consider the spacial case $\mu_1=\delta_y$ and $\mu_2=\delta_{y'}$. Namely, we need to show 
    \begin{equation}\label{KF-3}
        \mathbb{E} F(V_{m_0}(y,y'),V_{m_0}'(y,y'))\le c F(y,y')\quad \text{for any }y,y'\in Y.
    \end{equation}
    To this end, we examine the following four cases. As $y,y'\in Y$ are fixed, in the rest of the proof, we simply write $V_n,V_n'$ instead of $V_n(y,y'),V_n'(y,y')$.

    \medskip

    \textit{Case 0:} If $(y,y')\in \mathbf{Y}_\infty$, then Lemma~\ref{lem_coupling}(a) yields $V_n=V_n'$, and thus \eqref{KF-3} holds.

    \medskip

    \textit{Case 1:} If $(y,y')\in \mathbf{Y}_n$ for some $n\ge m_0 L$, divide $\boldsymbol{\Omega}$ into the following three events:
    \begin{itemize}
        \item[\tiny$\bullet$] $A_1:=\{(V_{m_0},V'_{m_0})\in \mathbf{Y}_{\ge n+m_0}\}$, on which $F(V_{m_0},V'_{m_0})\le q_2^{m_0/2} F(y,y')$. By \eqref{coupling-1},
        \begin{align*}
            \mathbb{P}(A_1)&\ge \mathbb{P}((V_j,V'_j)\in \mathbf{Y}_{n+j}\ \text{for any }1\le j\le m_0)\\
            &=\prod_{j=1}^{m_0} \mathbb{P}((V_j,V'_j)\in \mathbf{Y}_{\ge n+j}|(V_k,V'_k)\in \mathbf{Y}_{\ge n+k}\ \text{for any }1\le k\le j)\\
            &\ge \prod_{j=1}^{m_0} \mathbb{P}((V_j,V'_j)\in \mathbf{Y}_{\ge n+j}|(V_{j-1},V'_{j-1})\in \mathbf{Y}_{\ge n+j-1})\ge (1-\nu)^{m_0}\ge 1-m_0 \nu.
        \end{align*}
        % Here for the first inequality in the last line, we use an easy consequence of \eqref{coupling-1}:
        % \begin{align*}
        %     \mathbb{P}((V_{j},V_{j}')\in \mathbf{Y}_{\ge n+j})&\ge \sum_{k=n+j}^\infty \mathbb{P}((V_j,V_j')\in \mathbf{Y}_k,\ (V_{j-1},V_{j-1}')\in \mathbf{Y}_{k-1})\\
        %     &\ge \sum_{k=n+j}^\infty (1-\nu)\mathbb{P}((V_{j-1},V_{j-1}')\in \mathbf{Y}_{k-1})\\
        %     &=(1-\nu)\mathbb{P}((V_{j-1},V_{j-1}')\in \mathbf{Y}_{\ge n+j-1}).
        % % \end{align*}
        % Also note that, here and in the rest of the proof, the summations actually contains the term when $k=\infty$. It it easy to see that this bring no difference to the reasoning.

        \item[\tiny$\bullet$] $B_1:=\{(V_{m_0},V_{m_0}')\in \mathbf{Y}_{<n-m_0 L}\}$, on which $F(y,y')\le F_{-N}=2d^{1/2}$. By \eqref{coupling-2},
        \begin{align*}
            \mathbb{P}(B_1)&\le \mathbb{P}(\exists 1\le j\le m_0,\ (V_j,V_j')\in \mathbf{Y}_{<n-jL})\\
            &\le \sum_{j=1}^{m_0} \mathbb{P}((V_{j-1},V_{j-1}')\in \mathbf{Y}_{\ge n-(j-1)L},\ (V_j,V_j')\in \mathbf{Y}_{<n-jL})\\
            &\le \sum_{j=1}^{m_0} C_\nu (q_2^{-(j-1)L+1}\|y-y'\|)^{1/2}\le C_\nu m_0 q_2^{-m_0 L/2}\|y-y'\|^{1/2}.
        \end{align*}
        % Here in the last line we use a similar argument as in the estimate for $\mathbb{P}(A_1)$:
        % \begin{align*}
        %     &\mathbb{P}((V_{j-1},V_{j-1}')\in \mathbf{Y}_{\ge n-(j-1)L},\ (V_j,V_j')\in \mathbf{Y}_{<n-jL})\\
        %     \le &\sum_{k=n-(j-1)L}^\infty \mathbb{P}((V_{j-1},V_{j-1}')\in \mathbf{Y}_k,\ (V_j,V_j')\in \mathbf{Y}_{k-L})\\
        %     \le& \sum_{k=n-(j-1)L}^\infty  C_\nu (q_2^k d)^{1/2}\mathbb{P}((V_{j-1},V_{j-1}')\in \mathbf{Y}_k))\\
        %     \le &C_\nu (q_2^{n-(j-1)L} d)^{1/2}\le C_\nu (q_2^{-(j-1)L+1}\|y-y'\|)^{1/2}.
        % \end{align*}

        \item[\tiny$\bullet$] $D_1:=\Omega\setminus (A_1\cup B_1)$, on which $F(V_{m_0},V_{m_0}')\le q_2^{-m_0L/2} F(y,y')$, and 
        \[\mathbb{P}(D_1)\le 1-\mathbb{P}(A_1)\le m_0 \nu.\]
    \end{itemize}
    Thanks to $F(y,y')\ge \|y-y'\|^{1/2}$ and \eqref{c1def}, we thus obtain
    \begin{align*}
        \mathbb{E} F(V_{m_0},V_{m_0}')&\le \mathbb{P}(A_1)\sup_{A_1} F(V_{m_0},V_{m_0}')+\mathbb{P}(B_1)\sup_{B_1} F(V_{m_0},V_{m_0}')+\mathbb{P}(D_1)\sup_{D_1} F(V_{m_0},V_{m_0}')\\
        &\le 1\cdot q_2^{m_0/2} F(y,y')+C_\nu m_0 q_2^{-m_0 L/2}\|y-y'\|^{1/2}\cdot 2d^{1/2}+m_0\nu \cdot q_2^{-m_0L/2} F(y,y')\\
        &\le (q_2^{m_0/2}+2C_\nu m_0 q^{-m_0 L/2}\cdot d^{1/2}+m_0 \nu q_2^{-m_0 L/2})F(y,y')=c_2 F(y,y').
    \end{align*}

    \medskip

    \textit{Case 2:} If $(y,y')\in \mathbf{Y}_n$ for some $0\le n\le m_0 L-1$, divide $\boldsymbol{\Omega}$ into the following two events:
    \begin{itemize}
        \item[\tiny$\bullet$] $A_2:=(V_{m_0},V_{m_0}')\in \mathbf{Y}_{\ge n+m_0}$, on which $F(V_{m_0},V_{m_0}')\le q_2^{m_0/2} F(y,y')$. The same argument for the estimate of $\mathbb{P}(A_1)$ in Case 1 also yields $\mathbb{P}(A_2)\ge 1-m_0 \nu$.

        \item[\tiny$\bullet$] $D_2:=(V_{m_0},V_{m_0}')\in \mathbf{Y}_{<n+m_0}$. Then $\mathbb{P}(D_2)\le 1-\mathbb{P}(A_2)\le m_0\nu$, and on $D_2$ we have
        \[F(V_{m_0},V_{m_0}')\le 2d^{1/2}\le 2(q_2^{-(n+1)}\|y-y'\|)^{1/2}\le 2q_2^{-m_0L/2} F(y,y').\]
    \end{itemize}
    Hence in this case, thanks to \eqref{c2def}, we obtain
    \begin{align*}
        \mathbb{E} F(V_{m_0},V_{m_0}')&\le \mathbb{P}(A_2) \sup_{A_2} F(V_{m_0},V_{m_0}')+\mathbb{P}(D_2) \sup_{D_2} F(V_{m_0},V_{m_0}')\\
        &\le 1\cdot q_2^{m_0/2}F(y,y')+m_0\nu\cdot 2 q_2^{-m_0L/2} F(y,y')=c_1 F(y,y').
    \end{align*}

    \medskip

    \textit{Case 3:} If $(y,y')\in \mathbf{Y}_k$ for some $-N\le k\le -1$, we claim that there exists a constant $p>0$ (independent of $y,y'$ and $k$), such that the event $A_3:=\{(V_{m_0},V_{m_0}')\in \mathbf{Y}_{\ge k+1}\}$ satisfies $\mathbb{P}(A_3)\ge p$. We use this constant $p$ in the definition of $F$. Once the claim is true, then the monotonicity of $F_l$ implies
    \begin{align*}
        \mathbb{E}F(V_{m_0},V_{m_0}')&\le F_{l+1}\mathbb{P}(A_3)+F_{-N}\mathbb{P}(A_3^C)\\
        &\le p\cdot(2-(2/p)^{k+1})d^{1/2}+(1-p)\cdot 2d^{1/2}\\
        &=(2-2\cdot(2/p)^k)d^{1/2}\le c_3 F(y,y'),
    \end{align*}
    Here we tacitly use the fact that $F_k=(2-(2/p)^k) d^{1/2}$ is also valid for $k=0$, and
    \[c_3:=\max_{-N\le k\le -1} \frac{2-2\cdot (2/p)^k}{2-(2/p)^k}\in (0,1).\]
    
    In conclusion, \eqref{KF-3} holds with $c:=\max\{c_1,c_2,c_3\}\in (0,1)$, and thus Lemma~\ref{lem_Kanfunctional} immediately leads to Proposition~\ref{prop_restricY}. It remains to demonstrate our claim in Case 3.
\end{proof}

\begin{proof}[\rm \textbf{Proof of $\mathbb{P}(A_3)\ge p>0$ in Case 3}]
    Consider the event
    \[B:=\{(V_j,V_j')\in \mathbf{Y}_{<0}\text{ for any }1\le j<m_0\}.\]
    Then due to Lemma~\ref{lem_coupling}(a), there are i.i.d.~copies of random variables $\tilde{\eta}_1,\dots ,\tilde{\eta}_{m_0}$ with common law $\ell$, so that on the event $B$, almost surely
    \[V_{m_0}=S_{m_0}(y;\tilde{\eta}_1,\dots ,\tilde{\eta}_{m_0})\quad \text{and}\quad V_{m_0}'=S_{m_0}(y';\tilde{\eta}_1,\dots ,\tilde{\eta}_{m_0}).\]
    With a small parameter $\varepsilon>0$ to be determined, we introduce another auxiliary event
    \[D:=\{\|\tilde{\eta}_j-\tilde{\zeta}_j\|<\varepsilon\text{ for any }1\le j\le m_0\}.\]
    Here $\tilde{\zeta}_j\in K$ are the same as in $(\mathbf{H3})$. Note that $\mathbb{P}(D)>0$ since $\tilde{\zeta}_1,\dots ,\tilde{\zeta}_{m_0}\in \supp(\ell)$.
    
    On the one hand, on event $B^C$ there exists $1\le j<m_0$ so that $(V_j,V_j')\in \mathbf{Y}_{\ge 0}$. Similar to our analysis on $\mathbb{P}(A_1)$ in Step 1, it is easy to find that
    \begin{equation}\label{P(A)-1}
        \mathbb{P}(A_3)\ge (1-\nu)^{m_0} \mathbb{P}(B^C)
    \end{equation}
    
    On the other hand, since $S$ is locally Lipschitz, on the event $B\cap D$ we have
    \begin{align*}
        \|V_{m_0}-\tilde{y}\|&\le \|S_{m_0}(y;\tilde{\zeta}_1,\dots ,\tilde{\zeta}_m)-\tilde{y}\|+C\sum_{j=1}^{m_0} \|\zeta_j-\tilde{\zeta}_j\|_E\\
        &\le \left(q_0+Cd^{-1}m_0 \varepsilon \right)(\|y-\tilde{y}\|\vee \|y'-\tilde{y}\|).
    \end{align*}
    Here we tacitly use that $\|y-y'\|>d$ implies $\|y-\tilde{y}\|\vee \|y'-\tilde{y}\|\ge d/2$. And the same estimate holds for $\|V_{m_0}'-\tilde{y}\|$. If we choose $\varepsilon\ll 1$ so that $q_0+C d^{-1} m_0 \varepsilon\le q_2$, then
    \begin{equation}\label{P(A)-2}
        \mathbb{P}(A)\ge \mathbb{P}(B\cap D)\ge \mathbb{P}(D)-\mathbb{P}(B^C).
    \end{equation}
    
    Combining the estimates from two aspects \eqref{P(A)-1} and \eqref{P(A)-2}, the claim is justified as
    \[\mathbb{P}(A)\ge \frac{1\cdot (1-\nu)^{m_0} \mathbb{P}(B^C)+(1-\nu)^{m_0}\cdot (\mathbb{P}(D)-\mathbb{P}(B^C))}{1+(1-\nu)^{m_0}}=\frac{(1-\nu)^{m_0}\mathbb{P}(D)}{1+(1-\nu)^{m_0}}=:p>0.\]
    This is a constant determined by $\nu,m_0,d,\varepsilon$ and the law of random variables $\ell$. These parameters are all fixed beforehand. Specifically, $p$ is independent of $y,y'\in \mathbf{Y}_{<0}$.
\end{proof}

\subsection{Elements for the analysis of Schr\"odinger equations}

\subsubsection{Bourgain spaces and basic estimates}\label{appendix_Bourgain}

The Bourgain spaces, introduced by Bourgain in \cite{Bourgain-93}, is defined in Definition~\ref{def_Bourgain}. We recall here some basic properties and multilinear estimates needed in this paper. Most of the results are well-known, and can be found in e.g., \cite{Bourgain-book,RZ-09,Laurent-ECOCV,CXZZ-25}.

\begin{lemma}\label{lemBourgain_embed}
    For any $T>0$ and $s\in \mathbb{R}$, if $b>1/2$, then $X^{s,b}_T\hookrightarrow C(0,T;H^s(\mathbb{T}))$.
\end{lemma}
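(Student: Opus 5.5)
The plan is to prove the embedding $X^{s,b}_T\hookrightarrow C(0,T;H^s(\mathbb{T}))$ for $b>1/2$ first on the unrestricted space $X^{s,b}$, and then pass to the restriction space by taking infima over extensions. The key device is the twisted (interaction) representation. For $u\in X^{s,b}$, set $w(t):=S(-t)u(t)$, where $S(t)=e^{it\partial_x^2}$ is the free group from Section~\ref{subsec_notation}. On the Fourier side this gives $\hat{w}(\tau,k)=\hat{u}(\tau-k^2,k)$, so the weight $\langle \tau+k^2\rangle$ becomes $\langle\tau\rangle$. Hence
\[\|u\|_{X^{s,b}}=\|w\|_{H^b_t(\mathbb{R};H^s_x)}.\]
The appropriate sign convention in the shift should be checked against the definition $\langle\tau+k^2\rangle$, but it only changes $S(-t)$ to $S(t)$.

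With this reduction, the problem becomes a statement about vector-valued Sobolev functions. I would invoke the one-dimensional Sobolev embedding $H^b(\mathbb{R};\mathcal{X})\hookrightarrow C_b(\mathbb{R};\mathcal{X})$ for $b>1/2$, valid for any Hilbert space $\mathcal{X}$, here $\mathcal{X}=H^s$. The proof is direct. Write
\[w(t)=\int_{\mathbb{R}} e^{it\tau}\mathcal{F}_t w(\tau)\,d\tau.\]
By Cauchy--Schwarz with $\int\langle\tau\rangle^{-2b}\,d\tau<\infty$, one gets $\sup_t\|w(t)\|_{H^s}\le C_b\|w\|_{H^b_tH^s_x}$. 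Moreover, the integrand is absolutely integrable in $H^s$, so dominated convergence gives continuity of $t\mapsto w(t)$ in $H^s$.

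Next, I would undo the twist: $u(t)=S(t)w(t)$. Since $S(t)$ is a unitary $C_0$-group on $H^s$, strong continuity together with uniform boundedness shows that $t\mapsto S(t)w(t)$ is continuous. It also gives $\sup_t\|u(t)\|_{H^s}=\sup_t\|w(t)\|_{H^s}\le C\|u\|_{X^{s,b}}$.

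Finally, to handle the restriction space, take any extension $\tilde u$ of $u\in X^{s,b}_T$. Applying the previous bound to $\tilde u$ gives $\|u\|_{C(0,T;H^s)}\le C\|\tilde u\|_{X^{s,b}}$, and taking the infimum over all extensions yields $\|u\|_{C(0,T;H^s)}\le C\|u\|_{X^{s,b}_T}$.

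There is no genuine obstacle in this argument. The only points needing care are the bookkeeping of the Fourier sign convention in the twisting step and the justification of continuity, rather than mere boundedness, which is handled by dominated convergence and the strong continuity of $S(t)$.
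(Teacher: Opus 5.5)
Your proof is correct. The paper gives no argument of its own for this lemma and defers to the standard references \cite{Bourgain-book,RZ-09,Laurent-ECOCV}, where the proof is exactly your route: twist by the free group to turn the $X^{s,b}$ norm into the $H^b_t(\mathbb{R};H^s_x)$ norm, apply the embedding $H^b_t\hookrightarrow C_b$ for $b>1/2$, untwist using unitarity and strong continuity of $S(t)$, and take the infimum over extensions to handle $X^{s,b}_T$.
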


\begin{lemma}\label{lemBourgain_S(t)estimate}
For any $T>0$, $s\in \mathbb{R}$ and $b\in (1/2,1)$, there exists a constant $C>0$ so that
\begin{equation}\label{Multilinear-estimate-2}
\|S_a(t)u_0\|_{X_T^{s,b}}\leq C\|u_0\|_{H^s}\quad \text{for any }u_0\in H^s(\mathbb{T}),
\end{equation}
\begin{equation}\label{Multilinear-estimate-3}
\left\|\int_0^tS_a(t-\tau)F(\tau)d\tau\right\|_{X_T^{s,b}}\leq C\|F\|_{X_T^{s,b-1}}\quad \text{for any }f\in X_T^{s,b-1}.
\end{equation}
\end{lemma}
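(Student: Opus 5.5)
The statement to prove is Lemma~\ref{lemBourgain_S(t)estimate}, i.e.\ the linear and Duhamel estimates in $X^{s,b}_T$ for the damped group $S_a(t)$. My plan is to reduce to the classical estimates for the free Schr\"odinger group $S(t)=e^{it\partial_x^2}$ and to treat $-a(x)$ as a bounded perturbation. The free estimates are standard (see, e.g., \cite{Bourgain-book}). For a smooth time cutoff $\chi$ equal to $1$ on $[0,T]$ and $b\in(1/2,1)$, one has
\[\|\chi(t)S(t)u_0\|_{X^{s,b}}\le C\|u_0\|_{H^s}\quad\text{and}\quad \Big\|\chi(t)\int_0^t S(t-\tau)F(\tau)\,d\tau\Big\|_{X^{s,b}}\le C\|F\|_{X^{s,b-1}}.\]
These follow from the identity $\widehat{S(t)u_0}$ concentrated on $\tau=-k^2$ and the one-dimensional estimate $\|\chi\int_0^t g\|_{H^b_t}\le C\|g\|_{H^{b-1}_t}$ applied after conjugating by $S(-t)$. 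After passing to restrictions, the same bounds hold in $X_T^{s,b}$.

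Next I write the damped evolution via Duhamel: $w(t)=S_a(t)u_0+\int_0^tS_a(t-\tau)F(\tau)d\tau$ solves
\[w(t)=S(t)u_0+\int_0^tS(t-\tau)\big(-iF(\tau)\cdot i\big)\,d\tau-\int_0^tS(t-\tau)a\,w(\tau)\,d\tau,\]
with the obvious sign bookkeeping. The key auxiliary bound is that multiplication by the smooth function $a(x)$ maps $X^{s,b}_T$ into $X^{s,b-1}_T$ with a small constant on short intervals. I would obtain it from
\[\|aw\|_{X^{s,b-1}_\delta}\le C\delta^{\epsilon}\|aw\|_{X^{s,b'}_\delta}\le C\delta^{\epsilon}\|w\|_{X^{s,b}_\delta}\]
for some $b'<b$, $b-1<b'-\epsilon$. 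The first inequality is the standard time-localization gain $\|u\|_{X^{s,b_1}_\delta}\le C\delta^{b_2-b_1}\|u\|_{X^{s,b_2}_\delta}$ for $-1/2<b_1<b_2<1/2$ (here with $b_1=b-1$ and $b_2=b'\in[0,1/2)$). The second uses that multiplication by a smooth $a(x)$ is bounded on $X^{s,b'}$ for $0\le b'\le 1$, which follows by interpolation between $b'=0$ (where it is the $H^s$ multiplier bound) and $b'=1$ (where the commutator $[\partial_x^2,a]$ costs one derivative but $s$ is arbitrary, so one interpolates carefully, or takes $b'=1/2-$ and uses the estimate in \cite{RZ-09,Laurent-ECOCV}).

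With this bound, a contraction argument on $[0,\delta]$ with $\delta=\delta(a,s,b)$ gives both estimates on $[0,\delta]$. I then iterate over $\lceil T/\delta\rceil$ subintervals, using the gluing property of $X^{s,b}$ for $b<1$ together with Lemma~\ref{lemBourgain_embed} to control $w(k\delta)$ in $H^s$, to conclude on $[0,T]$ with $C=C(T,s,b,a)$.

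The main obstacle I expect is the multiplier bound for $a(x)$ on $X^{s,b'}$ at general $s\in\mathbb{R}$, especially for negative $s$ and $b'$ close to $1/2$. I would first try duality plus interpolation; failing that, I would cite the corresponding statement in \cite{RZ-09}.
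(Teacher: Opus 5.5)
Your overall scheme is the standard perturbative argument: Duhamel around the free group $S(t)$, treating $-a(x)w$ as a forcing term, a small power of $\delta$ on a short interval, contraction, then iteration and gluing. The paper gives no proof of its own and just cites \cite{RZ-09}. However, the step you single out as the ``main obstacle'' is not merely delicate; as stated it is false. Multiplication by a smooth non-constant $a(x)$ is \emph{not} bounded on $X^{s,b'}$ for any $b'>0$.

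Take $a=e^{ix}$ and $\hat u(\tau,k)=\mathbf{1}_{\{k=N\}}\phi(\tau+N^2)$ with $\phi$ a fixed bump. Then $\|u\|_{X^{s,b'}}\approx N^{s}$. But $e^{ix}u$ sits at frequency $k=N+1$, where $\tau+(N+1)^2=(\tau+N^2)+2N+1\approx 2N$, so $\|e^{ix}u\|_{X^{s,b'}}\approx N^{s+b'}$. A general non-constant smooth $a$ has some $\hat a(m)\neq 0$ with $m\neq 0$, and the same computation applies.

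In general, multiplication by $a$ only maps $X^{s,b'}$ into $X^{s-b',b'}$, and this loss is sharp. It is exactly the commutator loss $2a'\partial_x$ you noticed at $b'=1$. Interpolation cannot remove it, because the loss-free endpoint $X^{s,1}\to X^{s,1}$ you would interpolate from is itself false. Neither \cite{RZ-09} nor \cite{Laurent-ECOCV} can supply such a bound. With a loss of $b'$ derivatives, your fixed-point argument in $X^{s,b}$ does not close.

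The repair stays entirely within your framework: take $b'=0$, which your constraint $b-1<b'-\epsilon$ permits with $\epsilon=1-b$. The only multiplier bound then needed is $\|aw\|_{X^{s,0}_\delta}=\|aw\|_{L^2(0,\delta;H^s)}\le C_a\|w\|_{L^2(0,\delta;H^s)}\le C\|w\|_{X^{s,b}_\delta}$. This holds for every $s\in\mathbb{R}$, since multiplication by a smooth function is bounded on $H^s(\mathbb{T})$ (by duality when $s<0$).

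Combine it with $\|G\|_{X^{s,b-1}_\delta}\le C\delta^{1-b}\|G\|_{X^{s,0}_\delta}$ for $\delta\le 1$. Equivalently, use the inhomogeneous bound $\|\int_0^t S(t-\tau)G(\tau)\,d\tau\|_{X^{s,b}_\delta}\le C\delta^{1-b}\|G\|_{X^{s,0}_\delta}$. Either gives the smallness factor $C\delta^{1-b}$. The rest of your argument then goes through: the contraction on $[0,\delta]$, identification with the semigroup solution via $X^{s,b}_\delta\hookrightarrow C([0,\delta];H^s)$, and iteration over $\lceil T/\delta\rceil$ subintervals with gluing, which is legitimate for $1/2<b<1$. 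So the obstacle you anticipated disappears once you never ask $a$ to act on a space with positive modulation weight.
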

    
    The proof can be found in \cite[Lemma 4.1 and Lemma 4.2]{RZ-09}. We mention that without the damping $a(x)$, these estimates with respect to $S(t)$ are standard; see, e.g., \cite{Laurent-ECOCV}.

% In applications, the function $F$ in \eqref{Multilinear-estimate-3} represents either the external force or nonlinear terms. The next lemma of multilinear estimates serves to deal with the nonlinear term $|u|^{p-1} u$. Recall that $\mathcal{N}(u_1,\dots ,u_p)$ refers to the $p$-linear operator defined by \eqref{p_product}.

% \begin{lemma}\label{lemBourgain_multilinear}
% For every $T>0$, $s\ge 1$ and constants $b,b'$ satisfying $0<b'<1/2<b<1$ and $b+b'\le 1$, there exists a constant $C>0$, such that for any $u_1,\cdots,u_p\in X_T^{s,b}$,
% \begin{equation}\label{Multilinear-estimate}
% \|\mathcal{N} (u_1,\cdots,u_p)\|_{X_T^{s,-b'}}\leq C\sum_{j=1}^p \|u_j\|_{X_T^{s,b}} \prod_{l\not =j}\|u_l\|_{X_T^{1,b}}.
% \end{equation}
% \end{lemma}

% When $p=3$, these estimate are well-known, and valid for $s\ge 0$; see, e.g., \cite[Section V.2]{Bourgain-book}. As for general odd $p\ge 3$, we refer the reader to \cite[Proposition A.4]{CXZZ-25}.

\begin{lemma}\label{Lemma-multiplication}
For every $T>0$, $s\in \mathbb{R}$, $b\in [-1,1]$, there exists a constant $C>0$ such that
\[\|\psi(t)u\|_{_{X_T^{s,b}}}\le C\|\psi\|_{_{H^1(0,T;\mathbb{C})}}\|u\|_{_{X_T^{s,b}}}\quad \text{for any }\psi\in H^1(0,T;\mathbb{C}),\ u\in X_T^{s,b}.\]
\end{lemma}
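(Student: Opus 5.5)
The approach is to pass to the space–time Fourier side and reduce everything to a one-dimensional estimate in the time variable. The restricted norms $X^{s,b}_T$ are defined by infimum over extensions, so the first step is to fix an extension $\tilde u\in X^{s,b}$ of $u$ with $\|\tilde u\|_{X^{s,b}}\le 2\|u\|_{X^{s,b}_T}$. For $\psi$ we take an extension $\tilde\psi\in H^1(\mathbb{R})$ built by reflection across $0$ and $T$ and a smooth cutoff, with $\|\tilde\psi\|_{H^1(\mathbb{R})}\le C_T\|\psi\|_{H^1(0,T)}$. Since $\tilde\psi\tilde u$ extends $\psi u$, it then suffices to prove the global estimate $\|\tilde\psi\tilde u\|_{X^{s,b}}\le C\|\tilde\psi\|_{H^1(\mathbb{R})}\|\tilde u\|_{X^{s,b}}$.

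For the global estimate, $\tilde\psi$ depends only on $t$, so multiplication by it acts fibrewise in the spatial frequency $k$. Writing $\tilde u(t,x)=\sum_k e^{ikx}e^{-ik^2t}w_k(t)$ (the profile formulation), one has $\|\tilde u\|_{X^{s,b}}^2=\sum_k\langle k\rangle^{2s}\|w_k\|_{H^b(\mathbb{R})}^2$. Moreover $\tilde\psi\tilde u$ has profiles $\tilde\psi w_k$. Hence the statement reduces to the one-dimensional multiplier bound
\[\|\tilde\psi w\|_{H^b(\mathbb{R})}\le C\|\tilde\psi\|_{H^1(\mathbb{R})}\|w\|_{H^b(\mathbb{R})}\quad\text{for }b\in[-1,1],\]
applied to each $w_k$ and then summed with weights $\langle k\rangle^{2s}$. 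Note that $s$ plays no role at all.

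To prove the one-dimensional bound, treat the endpoints $b=0$ and $b=1$ first, then interpolate, then dualize.
\begin{itemize}
\item[(i)] For $b=0$, we have $\|\tilde\psi w\|_{L^2}\le\|\tilde\psi\|_{L^\infty}\|w\|_{L^2}\le C\|\tilde\psi\|_{H^1}\|w\|_{L^2}$ by the embedding $H^1(\mathbb{R})\hookrightarrow L^\infty$.
\item[(ii)] For $b=1$, use $(\tilde\psi w)'=\tilde\psi'w+\tilde\psi w'$. The term $\tilde\psi w'$ is handled as in (i). The term $\tilde\psi' w$ is bounded by $\|\tilde\psi'\|_{L^2}\|w\|_{L^\infty}\le\|\tilde\psi\|_{H^1}\|w\|_{H^1}$.
\item[(iii)] Complex interpolation between $L^2$ and $H^1$ gives every $b\in[0,1]$.
\item[(iv)] For $b\in[-1,0)$, use duality: multiplication by $\tilde\psi$ on $H^{b}=(H^{-b})^*$ is the adjoint of multiplication by $\bar{\tilde\psi}$ on $H^{-b}$. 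The bound for $-b\in(0,1]$ from (iii) then transfers with the same constant.
\end{itemize}

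The only delicate point is the restriction step: ensuring that the infimum-defined norm is compatible with multiplication and that $\tilde\psi$ extends with controlled $H^1$ norm. Both follow from the extension and reflection argument described above, with constants depending on $T$.
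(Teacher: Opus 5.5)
Your proposal is correct and follows essentially the same route as the paper, which takes the proof of \cite[Lemma 1.2]{Laurent-ECOCV} verbatim: prove the endpoints $b=0$ (using $\psi\in L^\infty$) and $b=1$ (Leibniz rule), then interpolate for $b\in[0,1]$ and dualize for $b\in[-1,0)$. At $b=1$ the bound $\|\psi' u\|\le\|\psi'\|_{L^2_t}\|u\|_{L^\infty_t}$ is exactly where $\psi\in H^1$ replaces smoothness; your reduction via the profiles $w_k$ to a scalar $H^b(\mathbb{R})$ multiplier bound is just a repackaging of $(i\partial_t+\partial_x^2)(\psi u)=\psi\,(i\partial_t+\partial_x^2)u+i\psi' u$, and your explicit $H^1$-extension of $\psi$ cleanly handles the restriction to $[0,T]$.
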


The proof is verbatim as \cite[Lemma 1.2]{Laurent-ECOCV}, despite $\psi$ is not assumed to be smooth.

\subsubsection{Well-posedness of linearized and backward systems}

We state the well-posedness concerning two linear systems derived from the NLS equations. The first is the linearized equation \eqref{D_uS}, and the second is the adjoint backward system \eqref{adjointsystem}.

\begin{lemma}[{\cite[Proposition A.7]{CXZZ-25}}]\label{lem_wplinearization}
    Let $s\ge 1$ and $R>0$ be arbitrarily given. Assume the constants $b,b'$ satisfy $0<b'<1/2<b$ and $b+b'<1$. Then there exists a constant $C>0$, such that for any $u\in X_1^{s,b}$ with $\|u\|_{X_1^{s,b}}\le R$, the following assertions hold.

    \begin{enumerate}
        \item[(a)] For any $v_0\in H^s$, the linearized equation \eqref{D_uS} admits a unique solution $v\in X_1^{s,b}$, and
        \begin{equation*}
            \|v\|_{X^{s,b}_1}\leq C\|v_0\|_{H^s}.
        \end{equation*}

        \item[(b)] For any $\varphi_1\in H^{-s}$, the backward system \eqref{adjointsystem} admits a unique solution $\varphi\in X_1^{-s,b}$, and
        \begin{equation*}
            \|\varphi\|_{X^{-s,b}_1}\leq C\|\varphi_1\|_{H^{-s}}.
        \end{equation*}
    \end{enumerate}
\end{lemma}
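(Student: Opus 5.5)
The plan is the standard Bourgain-space fixed-point argument for linear equations with bounded potentials, followed by a duality/time-reversal step for the backward system. Throughout, the constant may depend only on $s$, $b$, $b'$ and $R$.

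For part (a), I would write the solution in mild form,
\[
v(t)=S_a(t)v_0-i\int_0^t S_a(t-\tau)\,D\mathcal{N}_{u(\tau)}(v(\tau))\,d\tau .
\]
The potential term is a sum of $p$-linear expressions $\mathcal{N}(u,\dots,u,v)$ and $\mathcal{N}(u,\dots,u,v,u)$. I would take the standard multilinear estimate for general odd $p$ (cf. \cite[Proposition A.4]{CXZZ-25}), which for $s\ge1$ and $b+b'\le 1$ gives
\[
\|\mathcal{N}(u_1,\dots,u_p)\|_{X_T^{s,-b'}}\le C\sum_j\|u_j\|_{X_T^{s,b}}\prod_{l\ne j}\|u_l\|_{X_T^{1,b}} .
\]
The key point is the gain in time. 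I would pass from $X_T^{s,b-1}$ to $X_T^{s,-b'}$ on a short interval of length $T$, which produces a factor $T^{\theta}$ with $\theta=1-b-b'>0$; this is exactly where the strict inequality $b+b'<1$ is used. Combining this with Lemma~\ref{lemBourgain_S(t)estimate}, the Duhamel map sends $X_T^{s,b}$ to itself, and it is a contraction once $CT^{\theta}R^{p-1}<1/2$. Since this choice of $T$ depends only on $R$, the equation is linear, and $H^s$ is controlled by $X^{s,b}$ through Lemma~\ref{lemBourgain_embed}, I would iterate over about $T^{-1}$ subintervals covering $[0,1]$. Gluing the pieces with the usual cutoff argument for restriction norms then gives $\|v\|_{X_1^{s,b}}\le C(R)\|v_0\|_{H^s}$. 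Uniqueness follows from the same contraction.

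For part (b), I would substitute $\psi(t,x)=\overline{\varphi(1-t,x)}$ to turn the backward system into a forward one. A direct check shows that $\psi$ solves an equation of the form $i\psi_t+\psi_{xx}+ia\psi=V_1\psi+V_2\bar\psi$. Here $V_1$ and $V_2$ are products of $u(1-t)$ and $\bar u(1-t)$, the damping sign is correct, and $X^{-s,b}$ norms are preserved under this reflection-conjugation. It would remain to run the same contraction in the negative-regularity space $X^{-s,b}$. For this I need
\[
\|\mathcal{N}(u,\dots,u,w)\|_{X_T^{-s,-b'}}\le C\|u\|_{X_T^{s,b}}^{p-1}\|w\|_{X_T^{-s,b}} .
\]
I would obtain it by duality from the estimate used in (a): pair with a test function $g\in X^{s,b'}$ and move $w$ to the dual side. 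The quantity to control is then $\mathcal{N}(u,\dots,u,g)$ measured in $X^{s,-b}$. Because this requires the time weight $-b$ rather than $-b'$, I would instead invoke the symmetric $(b,b')$ version of the multilinear estimate, which the same proof gives when all exponents satisfy $b+b'<1$.

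I expect this negative-regularity duality estimate to be the main obstacle, since the time exponents have to be matched carefully. If it fails, the fallback is to define $\varphi$ as the adjoint $\mathcal{R}_u(1,t)^*$ applied to $\varphi_1$, using part (a) together with Lemma~\ref{lem_dual}. That gives the solution in $C(0,1;H^{-s})$, and the $X^{-s,b}$ bound would then be recovered by a single a posteriori application of the Duhamel estimate.
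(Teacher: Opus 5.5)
Your proposal is correct. The paper does not reprove this lemma; it cites \cite[Proposition A.7]{CXZZ-25}, and your argument is the standard one behind that result: a contraction in $X^{s,b}_T$ with the gain $T^{1-b-b'}$ (exactly where $b+b'<1$ enters), iteration over subintervals whose length depends only on $R$, and the reflection--conjugation $\psi(t)=\overline{\varphi(1-t)}$, which preserves $X^{\pm s,b}$ norms and turns the potentials into $|w|^{p-1}$ and $|w|^{p-3}w^2$ with $w=\overline{u(1-\cdot)}$.

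The step you flag as the main obstacle is harmless. For $s\ge 1>1/2$ the algebra property of $H^s$ gives $\|\mathcal{N}(u,\dots,g,\dots)\|_{X^{s,-b}}\le \|\mathcal{N}(u,\dots,g,\dots)\|_{L^2_tH^s_x}\le C\|u\|_{X^{s,b}}^{p-1}\|g\|_{X^{s,b'}}$; equivalently, multiplication by an $H^s$ function is bounded on $H^{-s}$. So the fallback is never needed, and as phrased it would be circular anyway, because Lemma~\ref{lem_dual} presupposes the backward solution map $\mathcal{R}^{bw}_u$ from part (b).
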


\end{appendices}

\medskip

\noindent\textbf{Funding} \; Shengquan Xiang is partially supported by NSFC 12571474. Zhifei Zhang is partially supported by NSFC 12288101.

\noindent\textbf{Acknowledgments} \;
The authors would like to thank Ziyu Liu for valuable discussions and suggestions during the preparation of the paper.

\normalem
\bibliographystyle{plain}
\bibliography{References}
	
\end{document}